\patchcmd{\section}{\normalfont\scshape}{\bfseries}{}{}
\declaretheoremstyle[bodyfont=\itshape, headformat={\makebox[0pt][r]{\makebox[15mm][l]{\texttt{\NUMBER}}}\NAME\NOTE}]{stythm}
\declaretheoremstyle[headformat={\makebox[0pt][r]{\makebox[15mm][l]{\texttt{\NUMBER}}}\NAME\NOTE}]{stydfn}
\declaretheoremstyle[headformat={\makebox[0pt][r]{\makebox[15mm][l]{\texttt{\NUMBER}}}\NAME\NOTE}]{styclm}
\let\mytagform@=\tagform@
\def\tagform@#1{\maketag@@@{{\makebox[0pt][r]{\makebox[15mm][l]{\ttfamily\ignorespaces#1\unskip\@@italiccorr}}}}\hspace{3mm}}
\renewcommand{\eqref}[1]{\textup{\mytagform@{\ref{#1}}}}
\declaretheorem[name=Theorem, numberwithin=section, style=stythm]{theorem}
\declaretheorem[name=Lemma, sibling=theorem, style=stythm]{lemma}
\declaretheorem[name=Corollary, sibling=theorem, style=stythm]{corollary}
\declaretheorem[name=Lemma-Definition, sibling=theorem, style=stythm]{deflemma}
\declaretheorem[name=Fact, sibling=theorem, style=stydfn]{fact}
\declaretheorem[name=Definition, sibling=theorem, style=stydfn]{definition}
\declaretheorem[name=Notation, sibling=theorem, style=stydfn]{notation}
\declaretheorem[name=Remark, sibling=theorem, style=stydfn]{remark}
\declaretheorem[numbered=no, name=Theorem A]{thmA}
\declaretheorem[numbered=no, name=Theorem B]{thmB}
\declaretheorem[numbered=no, name=Theorem C]{thmC}
\numberwithin{equation}{theorem}
\declaretheorem[name=Claim, sibling=equation, style=styclm]{claim}
\DeclareMathOperator{\im}{im}
\renewcommand{\emptyset}{{\cancel{\mathrm{o}}}}
\newcommand{\op}{{\mathrm{op}}}
\title{On interval decomposition of persistence modules}
\author{Wee Liang Gan}
\address{Department of Mathematics, University of California, Riverside, CA 92521, USA}
\email{wlgan@ucr.edu}
\author{Nadiya Upegui Keagy}
\address{Department of Mathematics, University of California, Riverside, CA 92521, USA}
\email{nadiya.upegui@email.ucr.edu}
\begin{document}

\begin{abstract}
   We give down-to-earth proofs of the structure theorems for persistence modules.
\end{abstract}

\maketitle

\section*{Introduction}

In this article, we give alternative proofs of the following theorems.

\begin{thmA}
   Let $V$ be a persistence module indexed by a poset. Then interval decompositions of $V$, if they exist, are unique up to automorphisms of $V$.
\end{thmA}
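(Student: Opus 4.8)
The plan is to run the classical Krull--Schmidt--Azumaya exchange argument, which is especially transparent for interval modules because the endomorphism ring of an interval module is just the coefficient field $k$. The first move is to reduce the theorem to the statement that the multiset of intervals occurring in a decomposition depends only on $V$ (writing $k_I$ for the interval module on an interval $I$). Suppose $V$ is realized as an internal direct sum $V=\bigoplus_{i\in S}A_i$ with $A_i\cong k_{I_i}$, and also as $V=\bigoplus_{j\in T}C_j$ with $C_j\cong k_{J_j}$, and suppose we have a bijection $\sigma\colon S\to T$ with $I_i=J_{\sigma(i)}$ for all $i$. Choosing isomorphisms $g_i\colon A_i\to C_{\sigma(i)}$ and setting $\alpha=\sum_{i\in S}\mu_{\sigma(i)}\,g_i\,\pi_i$ — where $\pi_i\colon V\to A_i$ are the structure projections of the first decomposition and $\mu_j\colon C_j\to V$ the inclusions of the second, the sum being locally finite and hence a well-defined morphism — one checks directly that $\alpha$ is an automorphism of $V$ (with inverse $\sum_{j}\lambda_{\sigma^{-1}(j)}\,g_{\sigma^{-1}(j)}^{-1}\,\rho_j$) and that $\alpha(A_i)=C_{\sigma(i)}$ for every $i$. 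Thus a conjugating automorphism is obtained for free once the underlying multisets of intervals are matched, and all the content lies in matching them.

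Two elementary facts about interval modules are needed. First, a morphism $k_I\to k_I$ is a family of scalars $c_p\in k$ indexed by $p\in I$ (and $0$ off $I$), and naturality against the structure maps of $k_I$ — which are the identity of $k$ between stalks over comparable elements of $I$ — forces $c_p=c_q$ whenever $p\le q$ in $I$; since $I$ is connected, all the $c_p$ coincide, so $\operatorname{End}(k_I)\cong k$. In particular this ring is local and has no idempotents besides $0$ and $1$, so $k_I$ is indecomposable. Second, the support $\{p:(k_I)_p\neq 0\}$ equals $I$ and is an isomorphism invariant, so $k_I\cong k_J$ implies $I=J$.

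The heart of the matter is the following \emph{exchange lemma}: if $V=A\oplus B$ with $A\cong k_I$, and $V=\bigoplus_{j\in T}C_j$ with each $C_j$ isomorphic to an interval module, then there is a $j_0\in T$ with $C_{j_0}\cong A$ and $V=C_{j_0}\oplus B$. To prove it, let $p\colon V\to A$ be the projection with kernel $B$ and let $q_j\colon V\to V$ be the idempotents of the second decomposition, so that $\sum_j q_j=\operatorname{id}_V$ (a locally finite sum). Restricting to $A$ yields an identity $\operatorname{id}_A=\sum_j (p|_{C_j})\circ(q_j|_A)$ in which only finitely many terms are nonzero; since this is the element $1$ of $\operatorname{End}(A)\cong k$, some term $(p|_{C_{j_0}})\circ(q_{j_0}|_A)$ is a unit. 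Then $q_{j_0}|_A\colon A\to C_{j_0}$ is a split monomorphism into the indecomposable module $C_{j_0}$, hence an isomorphism; so $C_{j_0}\cong k_I$ and its underlying interval is $I$. As $p|_{C_{j_0}}$ is then an isomorphism as well, it follows at once that $C_{j_0}\cap B=0$ and $C_{j_0}+B=V$, i.e.\ $V=C_{j_0}\oplus B$.

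It remains to iterate the exchange lemma to produce the bijection $\sigma$ with $I_i=J_{\sigma(i)}$. When $S$ is finite this is a straightforward induction on $|S|$: peel off a summand $A_{i_0}$, exchange it for some $C_{j_0}$ with $J_{j_0}=I_{i_0}$, observe that $B=\bigoplus_{i\neq i_0}A_i$ is itself a direct sum of interval modules with intervals $\{I_i\}_{i\neq i_0}$ and (projecting $\bigoplus_{j\neq j_0}C_j$ into $B$ along $C_{j_0}$) also one with intervals $\{J_j\}_{j\neq j_0}$, and apply the inductive hypothesis. When $S$ is infinite — which genuinely occurs, e.g.\ for $\bigoplus_{n}k_{\{n\}}$ over $\mathbb{Z}$ — the same exchange step must instead be organized through a Zorn's-lemma maximality argument in the style of Azumaya's theorem. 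I expect this transfinite bookkeeping to be the main obstacle, in particular the comparison of multiplicities when an interval occurs infinitely often, where the naive induction has to be replaced by a cardinal-arithmetic count; by contrast, everything local to a single summand is trivial here because $\operatorname{End}(k_I)$ is merely $k$.
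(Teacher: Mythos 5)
Your exchange lemma is correct and is exactly the classical Azumaya step, but the proof as a whole has a real gap: the passage from the exchange lemma to the conclusion is carried out only for finite index set $S$. For infinite $S$ you explicitly leave the transfinite bookkeeping undone (``I expect this transfinite bookkeeping to be the main obstacle\ldots''). This is not a minor detail to be filled in routinely: a bare Zorn maximality argument on partial matchings does not by itself deliver equality of cardinalities when some interval occurs infinitely often, and the ``cardinal-arithmetic count'' you gesture at is precisely the place where the usual proofs of the Krull--Remak--Schmidt--Azumaya theorem become delicate. As written, Theorem A is established only when the decomposition is finite.

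The paper sidesteps the transfinite exchange entirely. Rather than exchanging summands one at a time, it proves (Lemma \ref{lem: intersection}) that if $\mathcal{C}$ is the set of $M(I)$-isotypic summands in the first decomposition and $\mathcal{D}$ is the set of non-$M(I)$-isotypic summands in the second, then $\bigl(\bigoplus_{U\in\mathcal{C}} U\bigr)\cap\bigl(\bigoplus_{W\in\mathcal{D}} W\bigr)=0$. The proof is a short three-fold expansion of a vector using both decompositions together with Lemma \ref{lem:zero2} (a composite $M(I)\to M(J)\to M(I)$ with $I\neq J$ is zero). From this it follows that projecting $C=\bigoplus_{U\in\mathcal{C}}U$ onto $C'=\bigoplus_{W\in\mathcal{C}'}W$ along $D'=\bigoplus_{W\in\mathcal{B}-\mathcal{C}'}W$ is injective; evaluating at a single $x\in I$ and using that each $U_x$ is a line gives $|\mathcal{C}|=\dim C_x\le\dim C'_x=|\mathcal{C}'|$ as cardinals, with equality by symmetry. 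Because the multiplicity is read off as a vector-space dimension at one point and injectivity of linear maps bounds dimensions of arbitrary cardinality, the infinite case is handled for free. If you wish to keep the exchange strategy you must actually develop the transfinite bookkeeping; adopting the intersection-lemma route removes that obstacle outright. Your elementary facts about interval modules (endomorphism ring $k$, support as invariant, indecomposability) match the paper's Section \ref{sec:constant modules}, so the divergence is purely in how the global comparison of the two decompositions is organized.
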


\begin{thmB}
   Let $V$ be a pointwise finite dimensional persistence module indexed by a totally ordered set. Then $V$ has an interval decomposition.
\end{thmB}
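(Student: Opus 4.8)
The plan is to combine a Zorn's lemma argument with a careful use of pointwise finite dimensionality. Call a submodule of $V$ an \emph{interval submodule} if it is isomorphic to an interval module, and consider the collection of all families $\{U_i\}_{i\in I}$ of interval submodules of $V$ whose internal sum is direct. This collection is closed under unions of chains, since a sum is direct as soon as all of its finite subsums are, so Zorn's lemma yields a maximal such family $\{U_i\}$. Set $W=\bigoplus_i U_i\subseteq V$, a direct sum of interval modules. The whole theorem now reduces to showing $W=V$: if instead $W\subsetneq V$, it suffices to produce a single nonzero interval submodule $U\subseteq V$ with $U_t\cap W_t=0$ for all $t$, since then $\{U_i\}\cup\{U\}$ is a strictly larger family with direct sum, contradicting maximality. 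Thus everything comes down to the extension lemma: \emph{if $W\subsetneq V$ is any submodule, there is a nonzero interval submodule $U\subseteq V$ meeting $W$ trivially.}

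To prove this I would pass to the pointwise finite dimensional quotient $\bar V=V/W\neq 0$ and use finite dimensionality to produce a ``thread'' in $V$, lying off $W$, that is carried by an interval. For each $b\in T$ the images $\im(\bar V_{a,b})\subseteq\bar V_b$ form a chain increasing in $a<b$, so by finite dimensionality of $\bar V_b$ it is eventually constant, equal to a fixed subspace $R_b\subseteq\bar V_b$; dually the kernels $\ker(V_{b,c})$ are eventually constant as $c$ decreases to $b$. If $R_b\subsetneq\bar V_b$ for some $b$, pick $0\neq\bar x\in\bar V_b\setminus R_b$ --- an element ``born at $b$'' --- lift it to $x\in V_b$, and push it to the right via $t\mapsto V_{b,t}(x)$. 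Since $W$ is a submodule, both ``$V_{b,t}(x)=0$'' and ``$V_{b,t}(x)\in W_t$'' are upward-closed in $[b,\infty)$, so the right endpoint of the resulting interval is pinned down; arranging --- by adjusting $x$ within the coset $x+W_b$, using that $V_b$ is finite dimensional, and if necessary re-choosing $b$ --- that the thread actually vanishes in $V$ exactly when it first enters $W$, the nonzero vectors $V_{b,t}(x)$ span an interval submodule of $V$ meeting $W$ trivially. For comparison, in the finite case one decomposes $V$ directly by induction on $|T|$, at each step bringing the last structure map into partial-permutation form via arbitrary row operations together with the column operations permitted by the interval decomposition already built upstream.

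The genuinely hard case is when no birth point exists at all, i.e.\ $R_b=\bar V_b$ for every $b$ with $\bar V_b\neq0$; then the interval carrying $U$ must have an open or infinite left endpoint, as already for $\bar V\cong I[(d,e)]$. Here I would fix $b$ with $\bar V_b\neq0$, choose $0\neq\bar x_b\in\bar V_b$, and pull back along the structure maps to obtain nonzero $\bar x_a\in\bar V_a$ for $a$ running over a left-segment below $b$, using finite dimensionality to first stabilize the relevant image subspaces and then choose the pullbacks inside a fixed stable subspace. The crux of the whole argument is to lift the coherent system $\{\bar x_a\}$ in $\bar V$ to a coherent system $\{x_a\}$ in $V$, i.e.\ to solve $V_{a,a'}(x_a)=x_{a'}$ while respecting the initial data, together with the analogous arrangement of a clean death on the right. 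This is a statement about inverse and direct limits of finite dimensional vector spaces indexed by a totally ordered set: the obstruction vanishes precisely because the pertinent image chains stabilize (a Mittag--Leffler phenomenon), and this is exactly where pointwise finite dimensionality is indispensable. Once the coherent thread is in place it spans the desired interval submodule $U$ with $U_t\cap W_t=0$, which finishes the extension lemma and hence the theorem.

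Finally, a small technical matter I would settle at the outset: to talk uniformly about open and infinite endpoints it is convenient to embed $T$ into its Dedekind--MacNeille completion, or else to check by hand that the subsets of $T$ produced above are genuine intervals. Either way this is routine and does not disturb the logical skeleton, in which the only real work is the coherent lifting step.
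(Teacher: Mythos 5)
There is a genuine gap at the very outset, in the Zorn's lemma reduction, and the ``extension lemma'' you then reduce everything to is in fact false.

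Your Zorn argument takes a maximal family $\{U_i\}$ of interval submodules whose internal sum is direct, sets $W=\bigoplus_i U_i$, and claims that if $W\subsetneq V$ then some nonzero interval submodule $U$ satisfies $U\cap W=0$, contradicting maximality. Here is a counterexample. Take $P=\mathbb{R}$ and $V=M([0,1])$, and let $W=M((0,1])\subseteq V$ be the submodule supported on the half-open interval. Then $W$ is itself an interval submodule, so $\{W\}$ is an admissible family, and it is already maximal: if $U$ is a nonzero interval submodule with $U\cap W=0$, then $U_x=0$ for every $x\in(0,1]$ (since $W_x=V_x$ there), forcing $U_0\neq 0$; but then $V_{0,1}(U_0)\subseteq U_1=0$ contradicts $V_{0,1}=\mathrm{id}_k$. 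So the Zorn process can legitimately terminate at $\{M((0,1])\}$, whose sum is a proper submodule of $V$, and your reduction produces nothing. (Indeed $V$ is indecomposable, and $M((0,1])$ is not a direct summand of it.) The same example kills the extension lemma as stated, even in the special case where $W$ is a single interval submodule.

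What is missing is the requirement that the interval you peel off at each stage be a \emph{direct summand} of what remains, not merely a submodule meeting the running sum trivially. The paper's Zorn step keeps track of exactly this: it works with pairs $(\mathcal{A},W)$ where $V=(\bigoplus_{U\in\mathcal{A}}U)\oplus W$, so the running complement is carried along, and a maximal pair must have $W=0$ because a nonzero $W$ can be split further. The real analytic content is therefore not ``every proper submodule admits a disjoint interval'' but ``every nonzero pointwise finite dimensional module over a total order has an interval \emph{direct summand},'' and producing the complement is precisely where the work lies. Your coherent-thread construction (pulling back via a Mittag--Leffler/inverse-limit argument, pushing forward on the right) matches the paper's inverse-limit lemma and does give an interval submodule; but to get a complement one needs, at a chosen point $z$, a basis of $V_z$ compatible simultaneously with the flag of images from below and the flag of kernels from above (Steinberg's lemma), and one then defines the complement by taking preimages of the span of the remaining basis vectors. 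Without this step the Zorn argument does not close.
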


\begin{thmC}
   Let $V$ be a pointwise finite dimensional persistence module indexed by a zigzag poset. Then $V$ has an interval decomposition.
\end{thmC}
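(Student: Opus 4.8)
The plan is to use Theorem A to reduce to proving existence of an interval decomposition, and then to treat first a finite zigzag poset and afterwards an arbitrary one by exhausting it with finite convex subposets.

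First I would fix conventions: identify the index poset $Z$ with a convex subset of $\mathbb Z$ whose Hasse diagram is the path $\cdots\,v_i\,\text{---}\,v_{i+1}\,\text{---}\,v_{i+2}\,\cdots$, each edge carrying one of its two orientations; a persistence module over $Z$ is then a family of vector spaces $V_i$ together with one linear map across each edge, in the direction of that edge. I would record the elementary fact that the connected convex subsets of $Z$ are exactly the blocks $[a,b]=\{a,a+1,\dots,b\}\cap Z$ of consecutive indices, so that the candidate summands are the interval modules $k_{[a,b]}$ (equal to $k$ in degrees $a\le i\le b$ with identity structure maps inside $[a,b]$, and $0$ elsewhere), and the goal is to write $V$ as a direct sum of these.

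For a finite zigzag poset $Z=\{0,1,\dots,n\}$ this is Gabriel's theorem for a type $A_n$ quiver, which I would prove by hand: induct on $\sum_i\dim V_i$, and when $V\neq 0$ split off one interval summand, so that $V\cong k_I\oplus V'$ with $\dim V'<\dim V$ and induction applies. The interval summand is produced by linear algebra localized at an endpoint, a source, or a sink; for instance, at an endpoint $v_0$ with nonzero $V_0$ one splits off copies of $k_{\{0\}}$ (from $\ker$ or from a complement of the image of the first structure map) and then propagates a vector along the first run, while at a sink $v$ with neighbour maps $a,b$ one uses complements of $\im a\cap\im b$ inside $\im a$, inside $\im b$, and of $\im a+\im b$ inside $V_v$, together with lifts, exactly as in the three-term case $V_1\to V_2\leftarrow V_3$.

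Now let $Z$ be infinite. Write $Z=\bigcup_{m\ge 1}Z_m$ with each $Z_m$ finite and convex and $Z_m\subseteq Z_{m+1}$. Since every element and every covering relation of $Z$ lies in some $Z_m$, giving $V$ is the same as giving the compatible system of restrictions $V|_{Z_m}$, and an interval decomposition of $V$ is the same as a system of interval decompositions of the $V|_{Z_m}$ that is coherent in the sense that each restricts, under a suitable indexing of summands, to the previous one. By the finite case each $V|_{Z_m}$ admits an interval decomposition, and by pointwise finite-dimensionality only finitely many of its summands meet any given index. The heart of the proof is to choose these decompositions coherently. I would do so inductively, extending a chosen decomposition of $V|_{Z_m}$ over the at most two new vertices of $Z_{m+1}$ by the one-vertex normalization step from the finite case; the subtlety is that each extension applies an automorphism correction, so one must argue that this correction is supported near the newly added vertex and hence, at any fixed index, the decomposition stabilizes after finitely many steps, the stabilized data then assembling into an interval decomposition of $V$. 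Here Theorem A enters twice: it provides an indexing matching any two decompositions of the same finite $V|_{Z_m}$, and it pins down the multiset of intervals and their (finite) multiplicities so the bookkeeping is consistent. I expect this assembly/stabilization step — making precise that the finite-level decompositions converge — to be the main obstacle; a maximality argument is a possible alternative, taking a family of interval submodules of $V$ whose sum is direct that is maximal under inclusion and using the finite case applied to restrictions to show that if its sum were proper it could be enlarged.
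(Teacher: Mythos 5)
Two genuine gaps keep this from being a proof. First, you identify the zigzag poset with a convex subset of $\mathbb{Z}$ and speak of ``one linear map across each edge,'' i.e.\ a (possibly infinite) type $A$ quiver. But the paper's zigzag posets are much more general: between consecutive extrema $z_i$ and $z_{i+1}$ one has an arbitrary totally ordered set, possibly uncountable and with no covering relations at all (think of $\mathbb{R}$ with alternating orientation on unit blocks). Your source/sink linear algebra at a single vertex does not apply to such posets, and in fact the paper's finite-extremum case (Theorem \ref{thm:finite zigzag}) leans on the full totally-ordered Theorem B (itself requiring a Zorn's-lemma inverse-limit argument) precisely to handle the possibly infinite segments between extrema. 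You would at minimum need to invoke Theorem B for those segments and then glue at extrema, which is a different and heavier argument than ``Gabriel's theorem for $A_n$.''

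Second, and more seriously, for the infinite-extrema case you correctly identify the stabilization of the finite-level decompositions as ``the main obstacle'' and then do not resolve it; this is not a detail but the actual content of the theorem. Without control, the corrections need not be supported near the new vertex: as $n$ grows, an interval in a chosen decomposition of $V(n)$ that does not reach $z_n$ can keep growing on the left, and the finite-level decompositions need not converge pointwise. The paper's fix is the notion of \emph{monotonicity} (Definition \ref{def:monotonic}) together with Lemma \ref{lem:monotonic m exists}, which uses Theorem A plus a dimension count at $z_0$ to find a threshold $z_m$ past which any interval summand of $V(n)$ vanishing at $z_n$ must vanish on all of $[z_m,z_n]$; only with this in hand does the inductive extension (Lemma \ref{lem:infinite zigzag inductive step}) go through. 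Your alternative ``maximality argument'' is in fact close to the paper's Lemma \ref{lem:decomposition criteria}, but that lemma reduces the problem to showing that every nonzero pointwise finite dimensional module has an interval-module direct summand --- which, in the infinite case, again requires exactly the monotonicity analysis you are trying to avoid. So both routes you sketch funnel into the same unaddressed step.
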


Theorem A is a special case of the Krull-Remak-Schmidt-Azumaya Theorem, see for example \cite{par}*{\S 4.8}. We give a simple direct proof of it in Section \ref{sec: interval decomposition}; see Theorem \ref{thm:uniqueness}.

Theorem B was proved by Botnan and Crawley-Boevey in \cite{bot-cb}; we refer the reader to \cite{ou}*{\S 1.5} and \cite{po}*{\S 2.1} for discussions of earlier results. Our proof of Theorem B results from our attempt to understand a sketch of its proof given by Gabriel and Roiter in \cite{gr}*{\S 3.6}. We give our proof of Theorem B in Section \ref{sec: totally ordered}; see Theorem \ref{thm:totally ordered case}. 
 
The definition of a zigzag poset is given in Section \ref{sec: zigzag generalities}; see Definition \ref{def:zigzag}. An example of a zigzag poset is the poset $Z(\gamma)$ studied by Botnan and Crawley-Boevey in \cite{bot-cb}*{\S5.1} for which they proved Theorem C; see also \cite{ig}. 

In Section \ref{sec: zigzag finite}, we prove Theorem C when the zigzag poset has only finitely many extrema; see Theorem \ref{thm:finite zigzag}.

In Section \ref{sec: zigzag infinite},  we prove Theorem C when the zigzag poset has infinitely many extrema; see Theorem \ref{thm: zigzag N case} and Theorem \ref{thm:zigzag Z case}. 

This article is essentially self-contained; we use only elementary results in linear algebra and Zorn's lemma.

\section{Definitions and notation} \label{sec:definitions}

Fix a field $k$. Let $P$ be a poset. By a \emph{persistence module} indexed by $P$, we mean a functor from $P$ regarded as a category to the category of $k$-vector spaces. A \emph{morphism} of persistence modules indexed by $P$ is, by definition, a natural transformation of functors.

\begin{notation}
   Let $P$ be a poset. Let $V$ be any persistence module indexed by $P$. We write $V_x$ for the $k$-vector space which $V$ assigns to $x\in P$. We write $V_{x,y}: V_x \to V_y$ for the $k$-linear map which $V$ assigns to $x,y\in P$ when $x\leq y$. 
\end{notation}

\begin{notation}
   Let $P$ be a poset. Let $f:V\to W$ be any morphism of persistence modules indexed by $P$. For each $x\in P$, we write $f_x: V_x\to W_x$ for the component of $f$ at $x$.
\end{notation}

\begin{notation} \label{nota: in and pr}
   Let $P$ be a poset. Let $V$ be a persistence module indexed by $P$. Let $\mathcal{A}$ be a set of submodules of $V$ giving an internal direct sum decomposition
   \[ V=\bigoplus_{U\in \mathcal{A}} U. \] 
   Then for each $U\in \mathcal{A}$, we write $\mathrm{in}^U: U\to V$ and $\mathrm{pr}^U : V \to U$ for the inclusion and projection, respectively.
\end{notation}

\begin{definition}
   Let $P$ be a poset. Let $V$ be a persistence module indexed by $P$. We say that $V$ is \emph{pointwise finite dimensional} if $V_x$ is finite dimensional for all $x\in P$. 
\end{definition}

\begin{definition}
   Let $P$ be a poset. Let $I$ be a subset of $P$. 
   \begin{enumerate}
      \item We say that $I$ is \emph{convex} if: for all $x,y,z\in P$, we have $y\in I$ whenever $x,z\in I$ and $x\leq y\leq z$. 
      \item We say that $I$ is \emph{connected} if: for all $x,z\in I$, there exist $y_0, y_1, \ldots, y_n\in I$ such that
   \begin{equation} \label{eq:connected}
      \begin{cases}
         y_0=x, \\
         y_n=z, \\
         y_{i-1}\leq y_i \enspace \mbox{ or } \enspace y_{i-1}\geq y_i \quad \mbox{ for each }i\in \{1, \ldots, n\}.
      \end{cases}
   \end{equation} 
   \item We call $I$ an \emph{interval} in $P$ if $I$ is nonempty, convex, and connected.  
   \end{enumerate}
\end{definition}

\begin{definition} \label{def:constant module}
   Let $P$ be a poset. Let $I$ be an interval in $P$. The \emph{constant module} $M(I)$ is the persistence module indexed by $P$ defined as follows:
\begin{enumerate}
   \item for all $x\in P$,
   \[ M(I)_x = \begin{cases}
      k & \mbox{ if }x\in I,\\
      0 & \mbox{ if }x\notin I;
      \end{cases} \] 
   \item for all $x\leq y$ in $P$,
   \[ M(I)_{x,y} = \begin{cases}
      \mathrm{id}_k & \mbox{ if }x,y\in I,\\
      0 & \mbox{ if }x\notin I \mbox{ or }y\notin I.
      \end{cases} \]
\end{enumerate}
\end{definition}

\begin{definition}
   Let $P$ be a poset. An \emph{interval module} indexed by $P$ is a persistence module indexed by $P$ which is isomorphic to a constant module.
\end{definition}

\begin{remark}
   The zero persistence module indexed by a poset is not an interval module.
\end{remark}

\begin{definition}
   Let $P$ be a poset. Let $V$ be a persistence module indexed by $P$. An \emph{interval decomposition} of $V$ is an internal direct sum decomposition 
   \[ V= \bigoplus_{U\in \mathcal{A}} U \]
   where $\mathcal{A}$ is a (possibly empty) set of submodules of $V$ such that every $U\in \mathcal{A}$ is an interval module.
\end{definition}

\section{Reminder on constant modules} \label{sec:constant modules}

\begin{lemma} \label{lem:endo}
   Let $P$ be a poset. Let $I$ be an interval in $P$. Let $f: M(I)\to M(I)$ be a morphism. Then there exists a unique $\lambda\in k$ such that $f=\lambda\cdot \mathrm{id}_{M(I)}$. 
\end{lemma}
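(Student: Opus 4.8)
The plan is to exploit the fact that an interval $I$ is connected, so that the scalar governing $f$ at one point propagates to all of $I$. First I would fix a basepoint $a \in I$ (possible since $I$ is nonempty). Since $M(I)_a = k$, the component $f_a : k \to k$ is multiplication by a unique scalar $\lambda \in k$. I would then claim that $f_x = \lambda \cdot \mathrm{id}$ for every $x \in I$ (and trivially $f_x = 0$ on the zero space $M(I)_x$ for $x \notin I$, so there is nothing to check there).

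To prove the claim, I would first observe the key local fact: if $x \leq y$ are both in $I$, then $M(I)_{x,y} = \mathrm{id}_k$, and naturality of $f$ gives the commuting square $f_y \circ M(I)_{x,y} = M(I)_{x,y} \circ f_x$, i.e.\ $f_y = f_x$ as scalars. Thus $f$ has the same scalar value at $x$ and $y$ whenever $x \leq y$ (or $x \geq y$) within $I$. Now take any $x \in I$; by connectedness of $I$ there is a chain $a = y_0, y_1, \ldots, y_n = x$ in $I$ with consecutive elements comparable, and applying the local fact along each step of the chain gives $f_x = f_{y_n} = f_{y_{n-1}} = \cdots = f_{y_0} = f_a = \lambda \cdot \mathrm{id}$. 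This establishes $f = \lambda \cdot \mathrm{id}_{M(I)}$.

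Uniqueness of $\lambda$ is immediate: if $\lambda \cdot \mathrm{id}_{M(I)} = \mu \cdot \mathrm{id}_{M(I)}$, then evaluating at the basepoint $a$ gives $\lambda = \mu$ in $k$ (using that $M(I)_a = k \neq 0$). I do not anticipate a serious obstacle here; the only mild subtlety is making sure the induction/chain argument is phrased cleanly, since the comparabilities in the connectedness definition can go in either direction, but the commuting-square argument is symmetric in $x$ and $y$ so both directions yield $f_x = f_y$ equally. A second minor point to state carefully is that a ``scalar'' description of $f_x$ makes sense only when $M(I)_x \neq 0$; for $x \notin I$ the map $f_x$ is forced to be $0$, which matches $\lambda \cdot \mathrm{id}_{M(I)_x} = \lambda \cdot \mathrm{id}_0 = 0$ regardless of $\lambda$, so the formula $f = \lambda \cdot \mathrm{id}_{M(I)}$ holds on the nose.
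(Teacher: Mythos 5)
Your proof is correct and takes essentially the same route as the paper: extract a scalar at each point of $I$ from $M(I)_x = k$, use the naturality square to show the scalar is constant along comparable pairs in $I$, and propagate along a chain provided by connectedness. The paper leaves the $x\notin I$ case and the uniqueness implicit, which you spell out, but the argument is the same.
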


\begin{proof}
   For each $x\in I$, there exists a unique $\lambda_x\in k$ such that $f_x = \lambda_x \cdot \mathrm{id}_k$. We need to prove that for all $x,z\in I$, we have $\lambda_x = \lambda_z$.

   Observe that for all $w,y\in I$ such that $w\leq y$, we have 
   \[ M(I)_{w,y} \circ f_w  = f_y \circ M(I)_{w,y},\] 
   which implies that $\lambda_w = \lambda_y$. 

   Now consider any $x,z\in I$. Since $I$ is connected, there exist $y_0, y_1, \ldots, y_n\in I$ such that \eqref{eq:connected} holds. By the above observation, we have $\lambda_{y_0} = \lambda_{y_1} = \cdots = \lambda_{y_n}$, hence $\lambda_x=\lambda_z$.
\end{proof}

\begin{corollary}
   Let $P$ be a poset. Let $I$ be an interval in $P$. Then $M(I)$ is indecomposable. 
\end{corollary}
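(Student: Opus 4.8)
The plan is to deduce indecomposability directly from Lemma \ref{lem:endo}, which tells us that the endomorphism ring of $M(I)$ is $\{\lambda\cdot\mathrm{id}_{M(I)} : \lambda\in k\}$, a copy of the field $k$. Since $I$ is nonempty, $M(I)$ is a nonzero module, so what remains is to rule out any internal direct sum decomposition $M(I) = U_1\oplus U_2$ in which both $U_1$ and $U_2$ are nonzero.

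So suppose $M(I) = U_1\oplus U_2$ for submodules $U_1, U_2$. Using the notation of Notation \ref{nota: in and pr}, form the endomorphism $e := \mathrm{in}^{U_1}\circ\mathrm{pr}^{U_1}\colon M(I)\to M(I)$. Because $\mathrm{pr}^{U_1}\circ\mathrm{in}^{U_1} = \mathrm{id}_{U_1}$, this $e$ is idempotent: $e\circ e = e$. By Lemma \ref{lem:endo} there is a unique $\lambda\in k$ with $e = \lambda\cdot\mathrm{id}_{M(I)}$, and idempotency forces $\lambda^2=\lambda$, so $\lambda\in\{0,1\}$ since $k$ is a field.

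It remains to translate this dichotomy back to the summands. If $\lambda = 0$, then $\mathrm{in}^{U_1}\circ\mathrm{pr}^{U_1} = 0$; precomposing with $\mathrm{in}^{U_1}$ and using $\mathrm{pr}^{U_1}\circ\mathrm{in}^{U_1} = \mathrm{id}_{U_1}$ gives $\mathrm{in}^{U_1} = 0$, and since every component $\mathrm{in}^{U_1}_x\colon (U_1)_x\to M(I)_x$ is injective, this forces $U_1 = 0$. If instead $\lambda = 1$, then $\mathrm{in}^{U_1}\circ\mathrm{pr}^{U_1} = \mathrm{id}_{M(I)}$, so each component of $\mathrm{pr}^{U_1}$ is injective; as $\mathrm{pr}^{U_1}$ vanishes on $U_2$, this forces $U_2 = 0$. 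Either way one summand is zero, so $M(I)$ is indecomposable. I do not expect any real obstacle here: the entire content is Lemma \ref{lem:endo}, and the only point requiring care is invoking the injectivity of the inclusion maps to pass from "$\lambda\in\{0,1\}$" back to "$U_1=0$ or $U_2=0$".
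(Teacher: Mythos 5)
Your proof is correct and follows essentially the same route as the paper: form the idempotent $\mathrm{in}^{U}\circ\mathrm{pr}^{U}$, apply Lemma \ref{lem:endo} to get a scalar $\lambda\in\{0,1\}$, and conclude one summand vanishes. The only difference is that you spell out the final implication ($\lambda=0\Rightarrow U_1=0$, $\lambda=1\Rightarrow U_2=0$) in more detail than the paper, which leaves it as an obvious step.
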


\begin{proof}
   Suppose that $M(I)=U\oplus W$, where $U, W$ are submodules of $M(I)$. Define $f:M(I)\to M(I)$ by 
   \[ f=\mathrm{in}^U \circ \mathrm{pr}^U \] 
   (see Notation \ref{nota: in and pr}).
   We have $f\circ f=f$. By Lemma \ref{lem:endo}, we also have $f=\lambda \cdot \mathrm{id}_{M(I)}$ for some $\lambda\in k$. Observe that:
   \[ f\circ f= f \quad\Longrightarrow\quad \lambda^2=\lambda \quad\Longrightarrow\quad \lambda=0\mbox{ or }\lambda=1 
      \quad\Longrightarrow\quad U=0 \mbox{ or }W=0. \]
\end{proof}

\section{Interval decomposition} \label{sec: interval decomposition}

In this section, we prove Theorem A and a criterion for the existence of an interval decomposition.

\begin{lemma} \label{lem:zero}
   Let $P$ be a poset. Let $I$ and $J$ be distinct intervals in $P$. Let $g: M(I)\to M(J)$ and $h:M(J)\to M(I)$ be morphisms. Then $h\circ g=0$.
\end{lemma}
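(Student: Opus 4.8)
The plan is to analyze the composite $h \circ g : M(I) \to M(I)$ using Lemma \ref{lem:endo}, which forces it to be scalar multiplication by some $\lambda \in k$, and then to show that $\lambda = 0$ by a clever choice of index point. First I would dispose of a trivial sub-case: if $I \cap J = \emptyset$, then at every $x \in P$ either $M(I)_x = 0$ or $M(J)_x = 0$, so $g_x$ or $h_x$ is zero, hence $(h \circ g)_x = 0$ for all $x$; thus $h \circ g = 0$ directly. So from now on I may assume $I \cap J \neq \emptyset$.

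Since $I \neq J$ but $I \cap J \neq \emptyset$, there must be a point lying in exactly one of the two intervals; say (after possibly swapping the roles of $I$ and $J$, though note the statement is not symmetric in $g,h$, so I should be a bit careful) there exists $z \in I \setminus J$ or $z \in J \setminus I$. Pick $w \in I \cap J$. By convexity and connectedness of both intervals, I want to connect $w$ to the ``bad'' point $z$ by a zigzag path and track how the scalar propagates. The key observation: on $I \cap J$, at any point $x$ we have $g_x, h_x$ both maps $k \to k$, i.e. scalars $g_x = \mu_x \cdot \mathrm{id}$, $h_x = \nu_x \cdot \mathrm{id}$; but at a point $z$ in, say, $J \setminus I$, we have $M(I)_z = 0$, so $h_z = 0$ and also $g$ has codomain entry $M(J)_z = k$ but $M(I)_z = 0$ means... hmm, I need $(h\circ g)$ to vanish at $w \in I \cap J$.

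The cleanest approach: By Lemma \ref{lem:endo}, $h \circ g = \lambda \cdot \mathrm{id}_{M(I)}$, so it suffices to compute $(h \circ g)_w$ at a single convenient $w \in I$ and show it is $0$. Choose $z$ in the symmetric difference of $I$ and $J$. Suppose first $z \in I \setminus J$. Then at $z$, $(h \circ g)_z = \lambda \cdot \mathrm{id}_k$, but $g_z : M(I)_z = k \to M(J)_z = 0$ is zero, so $(h\circ g)_z = 0$, forcing $\lambda = 0$ and we are done. Suppose instead $z \in J \setminus I$; this doesn't immediately kill $\lambda$ since $M(I)_z = 0$ makes $(h \circ g)_z = 0$ vacuously. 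In that case I would instead run the naturality/connectedness argument inside $J$: connect $z$ to a point of $I \cap J$ by a path in $J$, and use that the scalars $\mu_x$ (for $g_x$) and $\nu_x$ (for $h_x$) are constant along paths in $I \cap J$ by naturality, but at the edge where the path leaves $I$, one of the transition maps of $M(I)$ is $0$ while the corresponding one of $M(J)$ is $\mathrm{id}$, forcing the relevant scalar to be $0$ at the boundary and hence on the whole component. The main obstacle is bookkeeping the asymmetry between $g$ and $h$ and handling the case $z \in J\setminus I$ carefully — I expect the argument there to mirror the proof of Lemma \ref{lem:endo}, propagating a scalar along a zigzag path and using a transition map that is $\mathrm{id}$ on one module but $0$ on the other at the point where the path exits $I \cap J$.
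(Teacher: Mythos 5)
Your Case 1 (some $z\in I\setminus J$) is exactly the paper's Case 1: at such a $z$ one has $(h\circ g)_z=\lambda\cdot\mathrm{id}_k$ and $g_z=0$, forcing $\lambda=0$. Your initial disposal of $I\cap J=\emptyset$ is harmless but redundant, since that case is already covered by $I\setminus J\neq\emptyset$.

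Where you genuinely diverge from the paper is the remaining case $I\subseteq J$, $J\setminus I\neq\emptyset$. You propose a direct ``propagate the scalar along a zigzag path inside $J$ until it exits $I$'' argument. This can be made to work: take a path $y_0,\dots,y_n$ in $J$ from a point of $J\setminus I$ to a point of $I$, find the first index $i$ with $y_i\in I$, and use naturality of $g$ (if $y_i\leq y_{i-1}$) or of $h$ (if $y_{i-1}\leq y_i$) at the edge $\{y_{i-1},y_i\}$: since $M(I)$ has the zero transition map there while $M(J)$ has the identity, one of $g_{y_i}$ or $h_{y_i}$ is forced to vanish, hence $\lambda=(h\circ g)_{y_i}=0$. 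One small inaccuracy in your sketch: you appeal to the scalars $\mu_x,\nu_x$ being ``constant along paths in $I\cap J$,'' but $I\cap J$ need not be connected; fortunately that claim is not needed, since a single boundary edge suffices. The paper handles this case with a slicker symmetry trick: apply Case 1 with the roles of $I,J$ and $g,h$ swapped to get $g\circ h=0$, whence $g\circ h\circ g=0$, i.e.\ $\lambda g=0$; then either $g=0$ or $\lambda=0$, and in both cases $h\circ g=0$. The paper's route avoids any path-tracking entirely and is shorter; your route is more hands-on and arguably illuminates \emph{why} the composite dies (the boundary of $I$ inside $J$ kills one of the two maps), but requires more bookkeeping.
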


\begin{proof}
   First, by Lemma \ref{lem:endo}, there exists a $\lambda\in k$ such that 
   \begin{equation} \label{eq: hg}
      h\circ g = \lambda \cdot \mathrm{id}_{M(I)}.
   \end{equation} 
   Next, since $I\neq J$, we have $I- J\neq \emptyset$ or $J- I\neq \emptyset$.

   \textbf{Case 1}: $I- J\neq \emptyset$.

   Let $x\in I- J$. Then:
   \begin{quote}
      $x\in I$, so by \eqref{eq: hg} we have $h_x\circ g_x = \lambda \cdot \mathrm{id}_k$;
      
      $x\notin J$, so $M(J)_x=0$, which implies that $g_x=0$ and $h_x=0$. 
   \end{quote}
   It follows that $\lambda=0$, so by \eqref{eq: hg} we have $h\circ g=0$. 
   
   \textbf{Case 2}: $J- I\neq \emptyset$. 

   We have $g\circ h=0$ from Case 1 by symmetry. This implies that $g\circ h\circ g=0$, hence by \eqref{eq: hg} we have $\lambda \cdot g=0$. 

   Obviously if $g=0$, then $h\circ g=0$. On the other hand, if $g\neq 0$, then $\lambda \cdot g=0$ implies that $\lambda=0$, so by \eqref{eq: hg} we again have $h\circ g=0$. 
\end{proof}

\begin{lemma} \label{lem:zero2}
   Let $P$ be a poset. Let $U, W, Z$ be interval modules indexed by $P$. Let $g: U \to W$ and $h:W \to Z$ be morphisms. Assume that 
   $W\ncong U$ and $Z\cong U$. Then $h\circ g=0$.
\end{lemma}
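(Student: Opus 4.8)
The plan is to reduce Lemma \ref{lem:zero2} to Lemma \ref{lem:zero} by transporting everything along the given isomorphisms to constant modules. Since $U$, $W$, $Z$ are interval modules, by definition there are intervals $I$, $J$, $L$ in $P$ together with isomorphisms $\alpha: U \to M(I)$, $\beta: W \to M(J)$, $\gamma: Z \to M(L)$. The hypothesis $Z \cong U$ gives $M(L) \cong M(I)$, and the hypothesis $W \ncong U$ gives $M(J) \ncong M(I)$.

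The first key step is to observe that for intervals, $M(I) \cong M(J)$ forces $I = J$. Indeed, a nonzero morphism $M(I) \to M(J)$ can exist only if the pointwise data are compatible, but more simply: an isomorphism restricts at each $x$ to an isomorphism $M(I)_x \to M(J)_x$, so $M(I)_x \neq 0 \iff M(J)_x \neq 0$, i.e.\ $I = J$. (This is elementary enough that I would just state it inline rather than isolate it as a lemma.) Consequently $L = I$, so $M(L) = M(I)$, while $J \neq I$ since otherwise $M(J) = M(I) \cong U$, contradicting $W \ncong U$.

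The second step is the transport. Set $\tilde g = \beta \circ g \circ \alpha^{-1} : M(I) \to M(J)$ and $\tilde h = \gamma \circ h \circ \beta^{-1} : M(J) \to M(L) = M(I)$. Since $I$ and $J$ are distinct intervals, Lemma \ref{lem:zero} applies and gives $\tilde h \circ \tilde g = 0$. But $\tilde h \circ \tilde g = \gamma \circ h \circ \beta^{-1} \circ \beta \circ g \circ \alpha^{-1} = \gamma \circ (h \circ g) \circ \alpha^{-1}$, and since $\gamma$ and $\alpha^{-1}$ are isomorphisms, this forces $h \circ g = 0$.

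I do not expect any real obstacle here; the only point requiring a moment's care is the bookkeeping that the hypotheses $W \ncong U$, $Z \cong U$ translate correctly to $J \neq I$, $L = I$ under the chosen isomorphisms — in particular that $M(J) \ncong M(I)$ genuinely implies $J \neq I$ (immediate) and that one must rule out the degenerate possibility that one of the modules is zero, which cannot happen since interval modules are by definition isomorphic to constant modules $M(I)$ with $I$ nonempty, hence are themselves nonzero.
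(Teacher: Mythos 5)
Your proof is correct and is exactly the intended unwinding of the paper's one-line justification (``Immediate from Lemma \ref{lem:zero}''): choose constant-module representatives, observe $M(I)\cong M(J)$ forces $I=J$, transport $g,h$ along the isomorphisms, and apply Lemma \ref{lem:zero}. The bookkeeping is accurate and there is no gap.
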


\begin{proof}
   Immediate from Lemma \ref{lem:zero}.
\end{proof}

\begin{lemma} \label{lem: intersection}
   Let $P$ be a poset. Let $V$ be a persistence module indexed by $P$. Let 
   \[ V= \bigoplus_{U\in \mathcal{A}} U \quad \mbox{ and }\quad V = \bigoplus_{W\in \mathcal{B}} W \]
   be interval decompositions of $V$. Let $I$ be an interval in $P$. Let 
   \begin{align*}
   \mathcal{C} &= \{U\in \mathcal{A} \mid U\cong M(I)\}, \\ 
   \mathcal{D} &= \{W\in \mathcal{B} \mid W \ncong M(I) \}.  
   \end{align*} 
   Then 
   \[ \left( \bigoplus_{U\in \mathcal{C}} U \right) \cap \left( \bigoplus_{W\in \mathcal{D}} W \right) = 0. \]
\end{lemma}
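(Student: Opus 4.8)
The plan is to show that any element $v$ lying in the intersection is fixed by two projection-type endomorphisms whose composite is forced to vanish. Concretely, set $A = \bigoplus_{U\in\mathcal C} U$ and $B = \bigoplus_{W\in\mathcal D} W$, so that $A$ is a submodule of $V$ (a sum of a subfamily of the first decomposition) and likewise $B$ for the second. Let $e\colon V\to V$ be the idempotent endomorphism that is the projection onto $A$ along $\bigoplus_{U\in\mathcal A\setminus\mathcal C}U$, and let $e'\colon V\to V$ be the projection onto $B$ along $\bigoplus_{W\in\mathcal B\setminus\mathcal D}W$. Then $A = \operatorname{im} e$, $B=\operatorname{im} e'$, and for $v\in A\cap B$ we have $e(v)=v=e'(v)$; it therefore suffices to prove $e\circ e' = 0$, since then $v = e(v) = e(e'(v)) = 0$.

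To prove $e\circ e' = 0$, I would decompose each of $e$ and $e'$ componentwise through the two interval decompositions. Write $e = \sum$ of the maps $\mathrm{in}^U\circ \mathrm{pr}^U$ over $U\in\mathcal C$, and similarly $e' = \sum$ over $W\in\mathcal D$ of $\mathrm{in}^W\circ\mathrm{pr}^W$. Then $e\circ e'$ is (a possibly infinite, but pointwise finite) sum of terms of the form
\[
  \mathrm{in}^U\circ \bigl(\mathrm{pr}^U\circ \mathrm{in}^W\bigr)\circ \mathrm{pr}^W,
  \qquad U\in\mathcal C,\ W\in\mathcal D,
\]
and the middle map $\mathrm{pr}^U\circ\mathrm{in}^W\colon W\to U$ is a morphism between interval modules with $W\ncong M(I)$ and $U\cong M(I)$. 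Hence it factors as $g\colon W\to U$ with target $\cong M(I)$ and source $\ncong M(I)$; applying Lemma \ref{lem:zero} (or directly Lemma \ref{lem:zero2} after composing with one more map to get a composable triple, or simply Lemma \ref{lem:zero} with the roles $I\leftrightarrow J$) we conclude $\mathrm{pr}^U\circ\mathrm{in}^W = 0$ for every such pair. Therefore every term in the sum for $e\circ e'$ vanishes, so $e\circ e'=0$.

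There is one genuine subtlety to handle carefully, which I expect to be the main obstacle: the families $\mathcal A,\mathcal B$ need not be finite, so $e$ and $e'$ are infinite sums of endomorphisms and one must be precise about what "$e\circ e'$ is the sum of the terms above" means. The resolution is pointwise: for a fixed $x\in P$, the vector space $V_x = \bigoplus_{U\in\mathcal A}U_x$ and for any $v\in V_x$ only finitely many components $\mathrm{pr}^U_x(v)$ are nonzero, so $e_x$ and $e'_x$ are genuine (finite) linear maps at each $x$, and the identity $(e\circ e')_x = \sum_{U,W}(\mathrm{in}^U)_x\circ(\mathrm{pr}^U\circ\mathrm{in}^W)_x\circ(\mathrm{pr}^W)_x$ is a finite sum valid at each $x$. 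Since each summand is $0$ by the previous paragraph, $(e\circ e')_x = 0$ for all $x$, i.e. $e\circ e'=0$. Everything else is routine bookkeeping with the inclusion/projection maps of Notation \ref{nota: in and pr}.
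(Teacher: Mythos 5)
Your strategy is to prove the stronger statement $e\circ e'=0$, where $e$ is the idempotent onto $A=\bigoplus_{U\in\mathcal C}U$ along $\bigoplus_{\mathcal A\setminus\mathcal C}$ and $e'$ the idempotent onto $B=\bigoplus_{W\in\mathcal D}W$ along $\bigoplus_{\mathcal B\setminus\mathcal D}$, by expanding $e\circ e'$ as a (pointwise finite) double sum of $\mathrm{in}^U\circ(\mathrm{pr}^U\circ\mathrm{in}^W)\circ\mathrm{pr}^W$ and declaring each middle factor $\mathrm{pr}^U\circ\mathrm{in}^W\colon W\to U$ to be zero because $W\ncong U$. That step is the gap: neither Lemma~\ref{lem:zero} nor Lemma~\ref{lem:zero2} asserts that a \emph{single} morphism between non-isomorphic interval modules vanishes, and in fact such a morphism can be nonzero (e.g.\ on the two-element chain $a<b$, $\mathrm{Hom}\bigl(M(\{a,b\}),M(\{a\})\bigr)\cong k\neq 0$). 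Both lemmas concern a \emph{composite of two} morphisms passing from $M(I)$ through $M(J)$ and back; that round trip is what forces the scalar in Lemma~\ref{lem:endo} to be zero. Your target $e\circ e'=0$ is also genuinely false, not just unproven: take $P=\{a<b\}$, $V_a=k^2$, $V_b=k$ with $V_{a,b}$ the second-coordinate projection, $\mathcal A=\{U_1,U_2\}$ with $U_1$ the first coordinate line (supported on $\{a\}$) and $U_2$ the second coordinate line (supported on $\{a,b\}$), $\mathcal B=\{W_1,W_2\}$ with $W_1$ the diagonal at $a$ (supported on $\{a,b\}$) and $W_2$ the first coordinate line at $a$ (supported on $\{a\}$), and $I=\{a\}$. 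Then $A=U_1$, $B=W_1$, $A\cap B=0$ as the lemma predicts, but $(e\circ e')_a(x,y)=(y,0)\neq 0$.

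The repair is exactly the extra composition you mention in passing and then drop: for $v\in A\cap B$ one has $v=e(v)=e'(v)$, hence $v=(e\circ e'\circ e)(v)$, and it suffices to show $e\circ e'\circ e=0$. Expanding this pointwise gives a triple sum of terms
\[
  \mathrm{in}^Z\circ\bigl(\mathrm{pr}^Z\circ\mathrm{in}^W\bigr)\circ\bigl(\mathrm{pr}^W\circ\mathrm{in}^U\bigr)\circ\mathrm{pr}^U,
  \qquad U,Z\in\mathcal C,\ W\in\mathcal D,
\]
whose middle composite $U\to W\to Z$ has source and target isomorphic to $M(I)$ and middle object not isomorphic to $M(I)$, so it vanishes by Lemma~\ref{lem:zero2}. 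This is precisely the paper's argument, carried out with three applications of the direct-sum expansion of $v$ rather than two.
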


\begin{proof}
   Let $x\in P$. Let 
   \[ v \in  \left( \bigoplus_{U\in \mathcal{C}} U_x \right) \cap \left( \bigoplus_{W\in \mathcal{D}} W_x \right). \]
   We need to show that $v=0$.

   There exists a finite subset $\mathcal{C}(v)$ of $\mathcal{C}$ such that
   \begin{equation} \label{eq: v_U}
      v = \sum_{U\in \mathcal{C}(v)} (\mathrm{in}^U_x \circ \mathrm{pr}^U_x)(v)
   \end{equation}
   (see Notation \ref{nota: in and pr}).
   There also exists a finite subset $\mathcal{D}(v)$ of $\mathcal{D}$  such that
   \begin{equation} \label{eq: v_W}
      v = \sum_{W\in \mathcal{D}(v)} (\mathrm{in}^W_x \circ \mathrm{pr}^W_x)(v). 
   \end{equation}   
   Let 
   \begin{align*}
      g^{U, W} &= \mathrm{pr}^W \circ \mathrm{in}^U \qquad\mbox{ for all }U\in \mathcal{A},\; W\in \mathcal{B};\\ 
      h^{W, Z} &= \mathrm{pr}^Z \circ \mathrm{in}^W \qquad\mbox{ for all }W\in \mathcal{B},\; Z\in \mathcal{A}.
   \end{align*}
   Then
   \begin{align*}
      v &= \sum_{Z\in \mathcal{C}(v)} (\mathrm{in}^Z_x \circ \mathrm{pr}^Z_x)(v) & \mbox{by \eqref{eq: v_U}}\\
      &= \sum_{Z\in \mathcal{C}(v)} \sum_{W\in \mathcal{D}(v)} (\mathrm{in}^Z_x \circ \mathrm{pr}^Z_x \circ \mathrm{in}^W_x \circ \mathrm{pr}^W_x)(v) & \mbox{by \eqref{eq: v_W}}\\
      &= \sum_{Z\in \mathcal{C}(v)} \sum_{W\in \mathcal{D}(v)}  \sum_{U\in \mathcal{C}(v)} (\mathrm{in}^Z_x \circ \mathrm{pr}^Z_x \circ \mathrm{in}^W_x \circ \mathrm{pr}^W_x \circ \mathrm{in}^U_x \circ \mathrm{pr}^U_x)(v) 
      & \mbox{by \eqref{eq: v_U}}\\
      &= \sum_{Z\in \mathcal{C}(v)} \sum_{W\in \mathcal{D}(v)}  \sum_{U\in \mathcal{C}(v)} (\mathrm{in}^Z_x \circ h^{W,Z}_x \circ g^{U,W}_x \circ \mathrm{pr}^U_x)(v) & \\
      &= 0 & \mbox{by Lemma \ref{lem:zero2}.}
   \end{align*}
\end{proof}

We now restate and prove Theorem A.

\begin{theorem} \label{thm:uniqueness}
   Let $P$ be a poset. Let $V$ be a persistence module indexed by $P$. Let 
   \[ V= \bigoplus_{U\in \mathcal{A}} U \quad \mbox{ and }\quad V = \bigoplus_{W\in \mathcal{B}} W \]
   be interval decompositions of $V$. Let $I$ be an interval in $P$. Let 
   \begin{align*} 
      \mathcal{C} &= \{U\in \mathcal{A} \mid U \cong M(I)\}, \\ 
      \mathcal{C}'&=\{W\in \mathcal{B} \mid W\cong M(I)\}. 
   \end{align*} 
   Then $|\mathcal{C}|=|\mathcal{C}'|$. 
\end{theorem}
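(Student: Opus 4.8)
The plan is to apply Lemma~\ref{lem: intersection} twice, in both directions, and combine the resulting cardinality inequalities via the Cantor--Schr\"oder--Bernstein theorem. Write $V_{\mathcal{C}} = \bigoplus_{U\in\mathcal{C}} U$ and $V_{\mathcal{C}'} = \bigoplus_{W\in\mathcal{C}'} W$, and set $\mathcal{D} = \{W\in\mathcal{B}\mid W\ncong M(I)\}$ and $\mathcal{D}' = \{U\in\mathcal{A}\mid U\ncong M(I)\}$. Since $\mathcal{B}$ is the disjoint union of $\mathcal{C}'$ and $\mathcal{D}$, we obtain an internal direct sum decomposition $V = V_{\mathcal{C}'}\oplus\bigl(\bigoplus_{W\in\mathcal{D}} W\bigr)$, so the projection $\mathrm{pr}\colon V\to V_{\mathcal{C}'}$ onto the first factor is a morphism of persistence modules with kernel $\bigoplus_{W\in\mathcal{D}} W$.

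First I would show $|\mathcal{C}|\le|\mathcal{C}'|$. Restricting $\mathrm{pr}$ to the submodule $V_{\mathcal{C}}$, its kernel is $V_{\mathcal{C}}\cap\bigl(\bigoplus_{W\in\mathcal{D}} W\bigr)$, which vanishes by Lemma~\ref{lem: intersection}. Hence $\mathrm{pr}|_{V_{\mathcal{C}}}\colon V_{\mathcal{C}}\to V_{\mathcal{C}'}$ is a monomorphism of persistence modules, and in particular $\mathrm{pr}_x$ restricts to an injective $k$-linear map $(V_{\mathcal{C}})_x\to(V_{\mathcal{C}'})_x$ for every $x\in P$. Now fix any $x\in I$, which is possible since $I$ is nonempty. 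For each $U\in\mathcal{C}$ we have $U\cong M(I)$ and $x\in I$, so $\dim_k U_x=1$; therefore $\dim_k(V_{\mathcal{C}})_x = |\mathcal{C}|$ as cardinals, and likewise $\dim_k(V_{\mathcal{C}'})_x = |\mathcal{C}'|$. The injection $(V_{\mathcal{C}})_x\hookrightarrow(V_{\mathcal{C}'})_x$ then yields $|\mathcal{C}|\le|\mathcal{C}'|$.

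Next I would run the symmetric argument with the roles of $\mathcal{A}$ and $\mathcal{B}$ interchanged: applying Lemma~\ref{lem: intersection} with $\mathcal{C}'$ in place of $\mathcal{C}$ and $\mathcal{D}'$ in place of $\mathcal{D}$ gives $V_{\mathcal{C}'}\cap\bigl(\bigoplus_{U\in\mathcal{D}'} U\bigr)=0$, and the same reasoning produces $|\mathcal{C}'|\le|\mathcal{C}|$. By the Cantor--Schr\"oder--Bernstein theorem, $|\mathcal{C}|=|\mathcal{C}'|$.

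I do not expect a serious obstacle here; the only points needing care are (i) recording that a monomorphism of persistence modules is injective pointwise, so that evaluation at $x$ preserves injectivity, and (ii) the bookkeeping with possibly infinite cardinals, where one must invoke Cantor--Schr\"oder--Bernstein instead of counting dimensions of finite-dimensional spaces. Note that pointwise finite dimensionality of $V$ is not needed for this statement.
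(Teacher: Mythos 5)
Your proposal is correct and is essentially the same as the paper's proof: both form the map $V_{\mathcal{C}}\to V_{\mathcal{C}'}$ given by inclusion followed by projection, use Lemma~\ref{lem: intersection} to show it is injective, evaluate at a point $x\in I$ to compare dimensions (which equal $|\mathcal{C}|$ and $|\mathcal{C}'|$), and finish by symmetry. The only cosmetic difference is that you explicitly name the Cantor--Schr\"oder--Bernstein theorem for the final cardinal equality, which the paper leaves implicit.
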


\begin{proof}
   Let 
   \[ C =\bigoplus_{U\in \mathcal{C}} U, \qquad D = \bigoplus_{U\in \mathcal{A}-\mathcal{C}} U, \qquad C' = \bigoplus_{W\in \mathcal{C}'} W, \qquad D'=\bigoplus_{W\in \mathcal{B}-\mathcal{C}'} W. \]
   Then $V= C\oplus D$, and also, $V= C'\oplus D'$.
   
   We have $\mathrm{in}^C: C\to V$ and $\mathrm{pr}^{C'} : V\to C'$ (see Notation \ref{nota: in and pr}). The kernel of $\mathrm{pr}^{C'}$ is $D'$. 
   
   Define $f: C \to C'$ by 
   \[ f= \mathrm{pr}^{C'} \circ \mathrm{in}^C. \] 
   By Lemma \ref{lem: intersection}, we have $C\cap D'=0$. This implies that $f$ is injective. 

   Pick $x\in I$. Then $f_x: C_x \to C'_x$ is an injective $k$-linear map. Hence
   \[ \dim C_x \leq \dim C'_x. \] 
   Since $\dim C_x = |\mathcal{C}|$ and $\dim C'_x = |\mathcal{C}'|$, we have 
   \[ |\mathcal{C}| \leq |\mathcal{C}'|.\]
   By symmetry we also have $|\mathcal{C}'| \leq |\mathcal{C}|$. Therefore $|\mathcal{C}|=|\mathcal{C}'|$. 
\end{proof}

\begin{remark}
   The sets $\mathcal{C}$ and $\mathcal{C}'$ in Theorem \ref{thm:uniqueness} may be infinite. 
\end{remark}

We next apply Zorn's lemma to prove a decomposition criterion.

\begin{lemma} \label{lem:decomposition criteria}
   Let $P$ be a poset. Let $V$ be a pointwise finite dimensional persistence module indexed by $P$. Let $\mathcal{E}$ be a set of nonzero submodules of $V$.
   Assume that for each nonzero direct summand $W$ of $V$, there exists a direct summand $U$ of $W$ such that $U\in \mathcal{E}$. Then there exists a subset $\mathcal{A}$ of $\mathcal{E}$ giving an internal direct sum decomposition $V=\bigoplus_{U\in \mathcal{A}}U$. 
\end{lemma}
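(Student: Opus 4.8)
The plan is to run Zorn's lemma on the collection $\mathcal{P}$ of all pairs $(\mathcal{A},W)$ in which $\mathcal{A}\subseteq\mathcal{E}$, the sum $\sum_{U\in\mathcal{A}}U$ is an internal direct sum, and $V=\bigl(\bigoplus_{U\in\mathcal{A}}U\bigr)\oplus W$, ordered by $(\mathcal{A}_1,W_1)\leq(\mathcal{A}_2,W_2)$ iff $\mathcal{A}_1\subseteq\mathcal{A}_2$ and $W_2\subseteq W_1$. This is a poset, and it is nonempty since $(\emptyset,V)\in\mathcal{P}$. Granting that every chain in $\mathcal{P}$ has an upper bound, we take a maximal element $(\mathcal{A},W)$ and claim $W=0$, which yields $V=\bigoplus_{U\in\mathcal{A}}U$ with $\mathcal{A}\subseteq\mathcal{E}$, as desired. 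Indeed, if $W\neq 0$ then $W$ is a nonzero direct summand of $V$, so by hypothesis there is a direct summand $U$ of $W$ with $U\in\mathcal{E}$, say $W=U\oplus W'$; note $U\neq 0$ since $\mathcal{E}$ consists of nonzero submodules. As $U\subseteq W$ while $W\cap\bigoplus_{U'\in\mathcal{A}}U'=0$, we get $U\notin\mathcal{A}$, and then $\bigl(\mathcal{A}\cup\{U\},\,W'\bigr)$ lies in $\mathcal{P}$ and strictly dominates $(\mathcal{A},W)$, contradicting maximality.

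The only nonroutine point, and the only place pointwise finite dimensionality enters, is the verification that a chain $\{(\mathcal{A}_i,W_i)\}_i$ in $\mathcal{P}$ has an upper bound. We would set $\mathcal{A}=\bigcup_i\mathcal{A}_i$, $W=\bigcap_i W_i$, and $W_i^+=\bigoplus_{U\in\mathcal{A}_i}U$. Since directness of a sum of submodules is a finitary condition and $\{\mathcal{A}_i\}$ is a chain of subsets, $\sum_{U\in\mathcal{A}}U$ is again an internal direct sum, and $\bigoplus_{U\in\mathcal{A}}U=\bigcup_i W_i^+$. It then remains to check $V=\bigl(\bigcup_i W_i^+\bigr)\oplus W$, which, being a pointwise condition, can be verified at each $x\in P$: in the finite-dimensional space $V_x$ one has $V_x=(W_i^+)_x\oplus(W_i)_x$ for every $i$, with $\{(W_i^+)_x\}_i$ increasing and $\{(W_i)_x\}_i$ decreasing along the chain, so the integers $\dim(W_i^+)_x$ are bounded and nondecreasing and hence there is an index $i^*$ with $(W_i^+)_x=(W_{i^*}^+)_x$ and $(W_i)_x=(W_{i^*})_x$ for all $i\geq i^*$. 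Consequently $\bigl(\bigcup_i W_i^+\bigr)_x=\bigcup_i(W_i^+)_x=(W_{i^*}^+)_x$ and $W_x=\bigcap_i(W_i)_x=(W_{i^*})_x$, giving $V_x=\bigl(\bigcup_i W_i^+\bigr)_x\oplus W_x$. Thus $(\mathcal{A},W)$ is an upper bound for the chain.

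The crux is therefore precisely this: in the pointwise finite dimensional setting, the union of a chain of direct summands of $V$ is again a direct summand, with $\bigcap_i W_i$ serving as a compatible complement. Directness of $\sum_{U\in\mathcal{A}}U$ is finitary and unproblematic, but being a direct summand is not finitary, and for a general persistence module a chain of complements need not stabilize; finiteness of $\dim V_x$ forces the stabilization used above and is exactly what makes the Zorn argument work. The remaining checks — that $\mathcal{P}$ is a nonempty poset, and that $(\mathcal{A}\cup\{U\},W')$ really belongs to $\mathcal{P}$ and strictly dominates $(\mathcal{A},W)$ at a maximal element — are straightforward.
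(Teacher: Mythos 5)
Your proof is correct and takes essentially the same approach as the paper: a Zorn's lemma argument on pairs $(\mathcal{A},W)$ with $V=\bigl(\bigoplus_{U\in\mathcal{A}}U\bigr)\oplus W$, where pointwise finite dimensionality yields the stabilization at each $x\in P$ needed to show a chain has an upper bound $(\bigcup_i\mathcal{A}_i,\bigcap_i W_i)$, after which maximality plus the hypothesis forces $W=0$. The one minor variation is your choice of partial order, requiring only $W_2\subseteq W_1$ on the complements, whereas the paper imposes the more rigid condition $W_1=\bigl(\bigoplus_{U\in\mathcal{A}_2-\mathcal{A}_1}U\bigr)\oplus W_2$; both orders make the argument go through, and your dimension-stabilization check handles the weaker order cleanly.
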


\begin{proof}
   Let $\mathbf{S}$ be the set of all pairs $(\mathcal{A}, W)$ where: 
   \begin{quote}
      $\mathcal{A}$ is a subset of $\mathcal{E}$,  
      
      $W$ is a submodule of $V$,
   \end{quote}
   such that we have an internal direct sum decomposition $V=\left(\bigoplus_{U\in \mathcal{A}} U \right) \oplus W$. 

   The set $\mathbf{S}$ is nonempty because $(\emptyset, V)\in \mathbf{S}$.

   Define a partial order on $\mathbf{S}$ by $(\mathcal{A}, W) \leq (\mathcal{B}, Z)$ if $\mathcal{A} \subseteq \mathcal{B}$ and
   \[ W = \left( \bigoplus_{U \in \mathcal{B} - \mathcal{A}} U  \right) \oplus Z. \] 
   
   \begin{claim} \label{claim:decomposition upper bound}
      Every chain in $\mathbf{S}$ has an upper bound. More precisely:
   
      \begin{quote} 
         For any chain $\mathbf{T}$ in $\mathbf{S}$, let 
      \begin{align*} 
         \mathcal{C} = \bigcup_{(\mathcal{A}, W)\in \mathbf{T}} \mathcal{A}, \qquad
         N = \bigcap_{(\mathcal{A}, W)\in \mathbf{T}} W.
      \end{align*}
      Then $(\mathcal{C}, N)$ lies in $\mathbf{S}$ and is an upper bound of $\mathbf{T}$.
     \end{quote}
   \end{claim}
     
   \begin{proof}[Proof of Claim \ref{claim:decomposition upper bound}]
      There are two things to check: 
      \begin{enumerate}
         \item $V= \left(\bigoplus_{U\in \mathcal{C}} U\right) \oplus N$; 
      
         \item $W=\left( \bigoplus_{U\in \mathcal{C}-\mathcal{A}} U\right) \oplus N$ for all $(\mathcal{A}, W)\in \mathbf{T}$.
      \end{enumerate}
      Thus for each $x\in P$, we need to check:
      \begin{enumerate}
         \item $V_x= \left(\bigoplus_{U\in \mathcal{C}} U_x\right) \oplus N_x$; 
      
         \item $W_x=\left( \bigoplus_{U\in \mathcal{C}-\mathcal{A}} U_x\right) \oplus N_x$ for all $(\mathcal{A}, W)\in \mathbf{T}$.
      \end{enumerate}

      Fix $x\in P$ and choose a pair $(\mathcal{B}, Z)\in \mathbf{T}$ such that $\dim Z_x$ is minimal. 
      
      Now let $(\mathcal{A}, W)\in \mathbf{T}$. 
      
      We make the following observations:
      \begin{enumerate}[label=(\alph*)]
         \item If $(\mathcal{A}, W) \leq (\mathcal{B}, Z)$, then 
         \[ W_x = \left( \bigoplus_{U \in \mathcal{B} - \mathcal{A}} U_x  \right) \oplus Z_x; \] 
         hence $Z_x \subseteq W_x$. 
         
         \item If $(\mathcal{B}, Z) \leq (\mathcal{A}, W)$, then 
         \[ Z_x = \left( \bigoplus_{U \in \mathcal{A} - \mathcal{B}} U_x  \right) \oplus W_x; \] 
         by minimality of $\dim Z_x$, it follows that $Z_x=W_x$, and $U_x = 0$ for all $U\in \mathcal{A}-\mathcal{B}$. 

         \item By (a) and (b), we have $N_x = Z_x$, and $U_x=0$ for all $U\in \mathcal{C}-\mathcal{B}$. 
      \end{enumerate}

      We deduce that:
      \begin{align*}
         V_x &= \left( \bigoplus_{U\in \mathcal{B}} U_x \right) \oplus Z_x & \\
         &= \left( \bigoplus_{U\in \mathcal{C}} U_x \right) \oplus N_x & \mbox{by (c).}
      \end{align*}
      If  $(\mathcal{A}, W) \leq (\mathcal{B}, Z)$, then
      \begin{align*}
         W_x &= \left( \bigoplus_{U \in \mathcal{B} - \mathcal{A}} U_x  \right) \oplus Z_x & \\
         &= \left( \bigoplus_{U \in \mathcal{C} - \mathcal{A}} U_x  \right) \oplus N_x & \mbox{by (c).}
      \end{align*}
      If $(\mathcal{B}, Z) \leq (\mathcal{A}, W)$, then
      \begin{align*}
         W_x &= Z_x & \mbox{by (b)} \\
         &=  \left( \bigoplus_{U \in \mathcal{C} - \mathcal{A}} U_x  \right) \oplus N_x & \mbox{by (c).}
      \end{align*}
      Claim \ref{claim:decomposition upper bound} is proved.
   \end{proof}

   By Claim \ref{claim:decomposition upper bound}, we can apply Zorn's lemma to deduce that there exists a pair, say $(\mathcal{A}, W)$, which is maximal in $\mathbf{S}$.
   
   Suppose that $W\neq 0$. Then by assumption there exists an internal direct sum decomposition $W=U \oplus Z$ where $U\in \mathcal{E}$. Let $\mathcal{B}=\mathcal{A} \cup \{U\}$. Then we have $(\mathcal{B}, Z)\in \mathbf{S}$ and $(\mathcal{A}, W) < (\mathcal{B}, Z)$, a contradiction to the  maximality of $(\mathcal{A}, W)$. 
   
   Therefore $W=0$. Hence $V= \bigoplus_{U\in \mathcal{A}} U$ is a desired decomposition.
   \end{proof}

\begin{lemma} \label{lem:interval decomposition criteria}
   Let $P$ be a poset. Assume that every nonzero pointwise finite dimensional persistence module indexed by $P$ has a direct summand which is an interval module. Then every pointwise finite dimensional persistence module indexed by $P$ has an interval decomposition.
\end{lemma}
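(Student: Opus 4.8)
The plan is to deduce the statement directly from Lemma \ref{lem:decomposition criteria}. Fix a pointwise finite dimensional persistence module $V$ indexed by $P$, and take $\mathcal{E}$ to be the set of all submodules of $V$ that are interval modules; note that every element of $\mathcal{E}$ is nonzero, since the zero module is not an interval module. It then suffices to check the hypothesis of Lemma \ref{lem:decomposition criteria}, namely that every nonzero direct summand $W$ of $V$ has a direct summand $U$ lying in $\mathcal{E}$.

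To do this, I would start from a nonzero direct summand $W$ of $V$, say $V = W \oplus W'$. Since $V_x = W_x \oplus W'_x$ for each $x \in P$, the module $W$ is pointwise finite dimensional, so the hypothesis of the present lemma applies to $W$ and yields an internal direct sum decomposition $W = U \oplus Z$ with $U$ an interval module. Then $V = U \oplus Z \oplus W'$ exhibits $U$ as a direct summand of $V$, hence as a submodule of $V$, so $U \in \mathcal{E}$, as needed.

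With the hypothesis verified, Lemma \ref{lem:decomposition criteria} produces a subset $\mathcal{A}$ of $\mathcal{E}$ with $V = \bigoplus_{U \in \mathcal{A}} U$; since each summand lies in $\mathcal{E}$ and is therefore an interval module, this is the desired interval decomposition. I do not expect any genuine obstacle here: the proof is a formal invocation of Lemma \ref{lem:decomposition criteria}, the only routine points being that a direct summand of a pointwise finite dimensional module is pointwise finite dimensional and that a direct summand of a direct summand of $V$ is itself a direct summand of $V$.
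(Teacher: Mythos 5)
Your proof is correct and is exactly the paper's argument, which simply says ``apply Lemma \ref{lem:decomposition criteria} with $\mathcal{E}$ the set of submodules of $V$ that are interval modules; the result follows.'' You have merely spelled out the routine verification of the hypothesis of Lemma \ref{lem:decomposition criteria} (that a direct summand of a pointwise finite dimensional module is pointwise finite dimensional, and that a direct summand of a direct summand of $V$ is a direct summand of $V$), which the paper leaves implicit.
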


\begin{proof}
   Let $V$ be a pointwise finite dimensional persistence module indexed by $P$. We can apply Lemma \ref{lem:decomposition criteria} by taking $\mathcal{E}$ to be the set of submodules of $V$ which are interval modules. The result follows.
\end{proof}

\section{Reminder on linear duality} \label{sec: linear duality}

To set our notation and for ease of reference, we collect in this section some basic facts from linear algebra.

\begin{notation} \label{nota:linear algebra}
   Let $E$ be any $k$-vector space. 
   \begin{enumerate}
      \item We write $E^*$ for the linear dual of $E$. 
      \item We write $\mathcal{S}(E)$ for the poset of linear subspaces of $E$ (where the partial order is inclusion).
      \item For each $D\in \mathcal{S}(E)$, we set $D^\perp = \{\alpha \in E^* \mid \alpha(v)=0 \mbox{ for all }v\in D\}$.      
      \item For each $\mathfrak{D} \in \mathcal{S}(E^*)$, we set ${^\perp\mathfrak{D}} = \{v\in E \mid \alpha(v)=0 \mbox{ for all }\alpha\in \mathfrak{D}\}$. 
   \end{enumerate}
\end{notation}

\begin{fact} \label{fact:direct sum dual}
      Let $E$ be a finite dimensional $k$-vector space. Then
      \begin{enumerate}
         \item we have mutually inverse bijections: 
      \begin{align*}
         \mathcal{S}(E) \to \mathcal{S}(E^*), \qquad & D \mapsto D^\perp; \\
         \mathcal{S}(E^*) \to \mathcal{S}(E), \qquad & \mathfrak{D} \mapsto {^\perp\mathfrak{D}}; 
      \end{align*}
      \item the bijections in (1) are order-reversing;
      \item for any $D\in \mathcal{S}(E)$, there is a natural bijective $k$-linear map 
      \[ (E/D)^* \to D^\perp, \qquad \theta \mapsto \theta \circ \pi, \]
      where $\pi:E\to E/D$ is the quotient map.
      \item for any $\mathfrak{D}_1, \mathfrak{D}_2 \in \mathcal{S}(E^*)$:
      \[ E^* = \mathfrak{D}_1\oplus \mathfrak{D}_2 \quad\Longrightarrow\quad E = {^\perp\mathfrak{D}_1} \oplus {^\perp\mathfrak{D}_2}. \] 
      \end{enumerate}
   \end{fact}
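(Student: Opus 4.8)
The plan is to prove the four parts in a slightly different order than listed, since part (3) supplies the dimension count $\dim D^{\perp} = \dim E - \dim D$ that powers everything else.

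For (3): the map $\theta \mapsto \theta \circ \pi$ is visibly $k$-linear, and it lands in $D^{\perp}$ because $\pi$ vanishes on $D$. It is injective since $\pi$ is surjective, so $\theta \circ \pi = 0$ forces $\theta = 0$; it is surjective because any $\alpha \in E^{*}$ vanishing on $D$ factors through $E/D$ by the universal property of the quotient vector space. Hence $(E/D)^{*} \cong D^{\perp}$, and as $E$ is finite dimensional, $\dim D^{\perp} = \dim (E/D)^{*} = \dim(E/D) = \dim E - \dim D$. Applying the same reasoning with $E^{*}$ in place of $E$, and using the canonical isomorphism $E \to E^{**}$ to identify ${}^{\perp}\mathfrak{D}$ with the annihilator of $\mathfrak{D}$ inside $E^{**}$, one gets $\dim {}^{\perp}\mathfrak{D} = \dim E - \dim \mathfrak{D}$ for every $\mathfrak{D} \in \mathcal{S}(E^{*})$.

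For (1) and (2): order-reversal is immediate from the definitions, since $D_{1} \subseteq D_{2}$ forces $D_{2}^{\perp} \subseteq D_{1}^{\perp}$, and likewise for ${}^{\perp}(-)$. For the bijection, observe that $D \subseteq {}^{\perp}(D^{\perp})$ holds directly from the definitions, and the two sides have equal dimension by the formulas just established, hence are equal; symmetrically $\mathfrak{D} = ({}^{\perp}\mathfrak{D})^{\perp}$, so the two assignments are mutually inverse. For (4): if $E^{*} = \mathfrak{D}_{1} \oplus \mathfrak{D}_{2}$, then ${}^{\perp}\mathfrak{D}_{1} \cap {}^{\perp}\mathfrak{D}_{2} = {}^{\perp}(\mathfrak{D}_{1} + \mathfrak{D}_{2}) = {}^{\perp}(E^{*}) = 0$, while $\dim {}^{\perp}\mathfrak{D}_{1} + \dim {}^{\perp}\mathfrak{D}_{2} = 2\dim E - \dim E^{*} = \dim E$, so $E = {}^{\perp}\mathfrak{D}_{1} \oplus {}^{\perp}\mathfrak{D}_{2}$.

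None of the steps is a genuine obstacle: this is standard finite-dimensional linear algebra, and the only real care needed is in the dimension bookkeeping and in invoking finite-dimensionality exactly where it is used — namely the isomorphism $E \cong E^{**}$ and the equality of the dimensions of a subspace and its annihilator. I would therefore present (3) together with the two dimension formulas first, after which (1), (2) and (4) each reduce to a one-line argument.
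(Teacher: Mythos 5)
The paper states this as a \emph{Fact} and provides no proof at all, relegating it to standard linear algebra, so there is no argument in the paper to compare against. Your proof is correct and complete: proving (3) first to obtain $\dim D^{\perp} = \dim E - \dim D$ (and, via the canonical isomorphism $E \cong E^{**}$, $\dim {}^{\perp}\mathfrak{D} = \dim E - \dim\mathfrak{D}$), then deriving (1), (2), (4) from the trivial inclusions together with the dimension count is exactly the standard route, and every step checks out.
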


\begin{notation}
   For any $k$-vector spaces $E_1, E_2$, and $k$-linear map $f: E_1\to E_2$, we write 
   \[ {^t f}: E_2^* \to E_1^* \] 
   for the transpose of $f$. 
\end{notation}

\begin{fact} \label{fact:transpose}
   Let $E_1, E_2$ be $k$-vector spaces. Let $f: E_1\to E_2$ be a $k$-linear map. Then
   \begin{enumerate}
      \item we have $\im({^t f}) = (\ker f)^\perp$;
      \item for any $\mathfrak{D}_1 \in \mathcal{S}(E_1^*)$ and $\mathfrak{D}_2\in \mathcal{S}(E_2^*)$: 
         \[ {^t f}(\mathfrak{D}_2) \subseteq \mathfrak{D}_1 \quad \Longrightarrow \quad f({^\perp \mathfrak{D}_1}) \subseteq {^\perp \mathfrak{D}_2}. \]
   \end{enumerate}   
\end{fact}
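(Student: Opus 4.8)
The plan is to treat the two assertions separately, using throughout the defining identity of the transpose,
\[ \beta(f(v)) = ({^t f(\beta)})(v) \qquad \text{for all } \beta \in E_2^* \text{ and } v\in E_1. \]
Part (2) is then immediate from unwinding definitions: assuming ${^t f(\mathfrak{D}_2)}\subseteq \mathfrak{D}_1$, take any $v\in {^\perp\mathfrak{D}_1}$ and any $\beta\in\mathfrak{D}_2$; then ${^t f(\beta)}\in\mathfrak{D}_1$, so ${^t f(\beta)}$ annihilates $v$, whence $\beta(f(v))=({^t f(\beta)})(v)=0$. Since $\beta\in\mathfrak{D}_2$ was arbitrary, $f(v)\in{^\perp\mathfrak{D}_2}$, as desired.

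For part (1), I would prove the two inclusions. The inclusion $\im({^t f})\subseteq (\ker f)^\perp$ follows at once from the identity above: for $\beta\in E_2^*$ and $v\in\ker f$ we have $({^t f(\beta)})(v)=\beta(f(v))=\beta(0)=0$. For the reverse inclusion, let $\alpha\in(\ker f)^\perp$. Since $\alpha$ vanishes on $\ker f$, it descends to a linear functional on $E_1/\ker f$; transporting this along the isomorphism $E_1/\ker f\to\im f$ induced by $f$ produces a linear functional on $\im f$, which we extend to some $\beta\in E_2^*$ after choosing a linear complement of $\im f$ in $E_2$ and requiring $\beta$ to vanish on it. By construction $\beta\circ f=\alpha$, i.e.\ ${^t f(\beta)}=\alpha$, so $\alpha\in\im({^t f})$.

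I expect no genuine obstacle here, since this is elementary linear algebra included only to fix notation; the one step that is not purely formal is the existence of a linear complement of $\im f$ in $E_2$, which, as no finite-dimensionality is assumed in this fact, rests on a basis-extension argument (Zorn's lemma), a tool the introduction explicitly allows. If one wished to restrict to the finite-dimensional case, part (1) could alternatively be obtained by factoring $f$ through $\im f$ and applying Fact \ref{fact:direct sum dual}(3) with $D=\ker f$, but the argument sketched above does not require that hypothesis.
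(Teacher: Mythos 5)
Your proof is correct. The paper states this as a \emph{Fact} in its ``Reminder on linear duality'' section and deliberately omits a proof, treating it as standard linear algebra, so there is no argument in the paper to compare against. Your argument is the expected one: part (2) is an unwinding of the adjunction identity $\beta(f(v)) = ({^t f}(\beta))(v)$, and part (1) combines the easy inclusion $\im({^t f}) \subseteq (\ker f)^\perp$ with the extension of a functional from $\im f$ to $E_2$; you are right to flag that, since the Fact is stated for arbitrary (not necessarily finite-dimensional) vector spaces, the existence of a complement to $\im f$ in $E_2$ requires a basis-extension argument via Zorn's lemma, which the paper's introduction explicitly permits. The alternative you sketch for the finite-dimensional case via Fact \ref{fact:direct sum dual}(3) would in fact suffice for the paper's actual uses of part (1), which all occur in pointwise finite dimensional settings, but the general statement is what is asserted and your proof covers it.
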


\section{Reminder on flags} \label{sec: flags}

\begin{definition}
   Let $E$ be a finite dimensional $k$-vector space.
   \begin{enumerate}
      \item A \emph{flag} in $E$ is a chain in $\mathcal{S}(E)$ (see Notation \ref{nota:linear algebra}(2)). 
      \item A \emph{complete flag} in $E$ is a maximal chain in $\mathcal{S}(E)$.
   \end{enumerate} 
\end{definition} 

\begin{definition}
      Let $E$ be a finite dimensional $k$-vector space. Let $\mathcal{F}$ be a flag in $E$. We say that a basis $B$ of $E$ is \emph{compatible} with $\mathcal{F}$ if: for each $F\in \mathcal{F}$, the set $B\cap F$ is a basis of $F$.  
\end{definition}

The following lemma is well-known and appears to be due to Steinberg, see \cite{st}*{Lemma 2.1}. For the reader's convenience, we include Steinberg's proof.

\begin{lemma} \label{lem:basis for two flags}
   Let $E$ be a finite dimensional $k$-vector space. Let $\mathcal{F}$ and $\mathcal{G}$ be flags in $E$. Then there exists a basis of $E$ which is compatible with both $\mathcal{F}$ and $\mathcal{G}$. 
\end{lemma}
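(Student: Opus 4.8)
The plan is to induct on $\dim E$. If $\dim E = 0$ there is nothing to prove, so assume $\dim E = n \geq 1$ and that the statement holds for all spaces of smaller dimension. The idea is to peel off a single well-chosen vector, quotient by it, apply the inductive hypothesis, and lift.

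First I would reduce to the case where both flags are complete: any flag can be refined to a complete flag, and a basis compatible with a refinement is a fortiori compatible with the original flag, so it costs nothing to enlarge $\mathcal{F}$ and $\mathcal{G}$ to complete flags. Write $\mathcal{F}: 0 = F_0 \subsetneq F_1 \subsetneq \cdots \subsetneq F_n = E$ and $\mathcal{G}: 0 = G_0 \subsetneq G_1 \subsetneq \cdots \subsetneq G_n = E$, each step having codimension one in the next. Next I would choose a vector $v \in F_1$ with $v \neq 0$; since $F_1$ is a line, $F_1 = kv$. The key observation (this is the heart of Steinberg's argument) is that there is an index $j$ with $G_{j-1} \not\ni v$ but $v \in G_j$, equivalently $G_j = G_{j-1} \oplus kv$; indeed $j$ is the smallest index with $v \in G_j$, and such $j$ exists because $v \in E = G_n$.

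Now consider the quotient $\pi : E \to E/kv =: \bar E$, which has dimension $n-1$. The images $\pi(F_i)$ for $i = 1, \ldots, n$ form a flag in $\bar E$ (a chain of subspaces, possibly with a repeated term at the bottom since $\pi(F_1) = 0$), and likewise the images $\pi(G_i)$ for $i = 0, \ldots, n$ form a flag in $\bar E$ (now the repeated term is $\pi(G_{j-1}) = \pi(G_j)$, because $G_j = G_{j-1} \oplus kv$). By the inductive hypothesis applied to these two flags in $\bar E$, there is a basis $\bar B = \{\bar b_1, \ldots, \bar b_{n-1}\}$ of $\bar E$ compatible with both. For each $\bar b_m$ pick a lift $b_m \in E$ with $\pi(b_m) = \bar b_m$, and set $B = \{v, b_1, \ldots, b_{n-1}\}$. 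Then $B$ is a basis of $E$: it has $n$ elements and spans, since $v$ spans $\ker \pi$ and the $b_m$ map onto a basis of $\bar E$.

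It remains to check that $B$ is compatible with $\mathcal{F}$ and with $\mathcal{G}$, which I expect to be the one genuinely fiddly point and the main obstacle. The clean way to see it: for a subspace $F$ with $v \in F$, compatibility of $\bar B \cap \pi(F)$ as a basis of $\pi(F)$ upgrades to compatibility of $B \cap F$ as a basis of $F$, because $F = kv \oplus (\text{the span of the lifts of } \bar B \cap \pi(F))$ and one checks the lifts landing in $\pi(F)$ can be taken inside $F$ — more carefully, $B \cap F = \{v\} \cup \{b_m : \bar b_m \in \pi(F)\}$ once the lifts are chosen compatibly, using $\dim F = 1 + \dim \pi(F)$. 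For the flag $\mathcal{F}$ this handles every $F_i$ with $i \geq 1$ (all contain $v = $ the generator of $F_1$), and $F_0 = 0$ is trivial. For the flag $\mathcal{G}$ it handles $G_i$ for $i \geq j$ (these contain $v$); for $i < j$ we have $v \notin G_i$, and here $\pi$ restricted to $G_i$ is injective with image $\pi(G_i)$, so $B \cap G_i = \{b_m : \bar b_m \in \pi(G_i)\}$ maps bijectively onto $\bar B \cap \pi(G_i)$, a basis of $\pi(G_i) \cong G_i$, hence is a basis of $G_i$. The dimension bookkeeping across the jump at $i = j$ is exactly where one must be careful that no vector is double-counted and none is missed, but it works out because $\dim G_j = \dim G_{j-1} + 1$ matches $|B \cap G_j| = |B \cap G_{j-1}| + 1$ (the extra element being $v$). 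This completes the induction.
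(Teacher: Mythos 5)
Your proof is correct, but it takes a route that is genuinely different from — in fact dual to — the one in the paper. The paper works ``from the top'': it fixes the hyperplane $F_{n-1}$ in $\mathcal{F}$, intersects the flag $\mathcal{G}$ with $F_{n-1}$ (shifting indices past the unique $s$ where $G_{s+1} \nsubseteq F_{n-1}$), applies the inductive hypothesis inside $F_{n-1}$, and then appends a single vector $e \in G_{s+1} - F_{n-1}$. You work ``from the bottom'': you fix the line $F_1 = kv$, project $\mathcal{G}$ through the quotient $E \to E/kv$ (collapsing at the unique $j$ with $G_j = G_{j-1} \oplus kv$), apply the inductive hypothesis in $E/kv$, lift, and prepend $v$. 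These are mirror images of each other, and both are valid inductive steps. The trade-off: the paper's version needs a short dimension count to see that $\{G'_i\}$ is a complete flag in $F_{n-1}$, and then the basis $B' \cup \{e\}$ is compatible essentially for free; your version avoids that count but shifts the burden onto choosing the lifts $b_m$ carefully, which you flag as the fiddly point. That step does go through, and it is worth saying exactly why: for any $F_i$ with $i \geq 1$ and any $\bar b_m \in \pi(F_i)$, the whole fiber $\pi^{-1}(\bar b_m)$ lies in $F_i$ (because $v \in F_i$), so the $\mathcal{F}$-constraints impose no restriction on the lift; the only binding constraint comes from the $G_\ell$ with $\ell < j$ (which meet the fiber in a single point), and picking $b_m$ to be the unique preimage in the smallest such $G_\ell$ containing $\bar b_m$ (when one exists) satisfies everything simultaneously. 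With that sentence added, your argument is complete.
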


\begin{proof}
   Let $n=\dim E$. We use induction on $n$, the case $n=0$ being trivial. 
   
   Assume $n\geq 1$. We may assume further that $\mathcal{F}$ and $\mathcal{G}$ are complete flags, say 
   $\mathcal{F} = \{F_0, F_1, \ldots, F_n\}$ and $\mathcal{G} = \{G_0, G_1, \ldots, G_n\}$ 
   where $\dim F_i = \dim G_i = i$ for each $i$. 
   
   Observe that there exists a unique $s$ such that $G_s \subseteq F_{n-1}$ but $G_{s+1} \nsubseteq F_{n-1}$.
   
   For each $i\in\{0, 1, \ldots, n-1\}$, let 
      \[ G'_i = \begin{cases}
         G_i & \mbox{ if } i\leq s,\\
         G_{i+1} \cap F_{n-1} & \mbox{ if }i>s.
         \end{cases} \]

   Let $\mathcal{F}' = \{F_0, F_1, \ldots, F_{n-1}\}$ and $\mathcal{G}' = \{G'_0, G'_1, \ldots, G'_{n-1}\}$.

   Obviously $\mathcal{F}'$ is a complete flag in $F_{n-1}$. A straightforward calculation yields $\dim G'_i = i$ for each $i$. Thus $\mathcal{G}'$ is also a complete flag in $F_{n-1}$. By induction hypothesis, there exists a basis $B'$ of $F_{n-1}$ which is compatible with both $\mathcal{F}'$ and $\mathcal{G}'$. 
   
   Choose any $e\in G_{s+1}-F_{n-1}$.
   
   Let $B=B'\cup\{e\}$. It is easy to see that $B$ is a basis of $E$ which is compatible with both $\mathcal{F}$ and $\mathcal{G}$. 
\end{proof}

\section{Totally ordered sets} \label{sec: totally ordered}

The aim of this section is to prove Theorem B. 

We begin with the following lemma which is a special case of \cite{bour}*{Chapter III, \S 7.4, Example 2}. The proof we give is a slight simplification of the proof of \cite{bour}*{Chapter III, \S 7.4, Theorem 1}.

\begin{lemma} \label{lem:inverse limit}
   Let $P$ be a totally ordered set. Let $V$ be a pointwise finite dimensional persistence module indexed by $P$. Let 
   \[ z\in P,\qquad v\in V_z,\qquad  I = \left\{ x \in P \mid x\leq z,\ V_{xz}^{-1}(v)\neq \emptyset \right\}.\]
 Then there exists $(u_x)\in \prod_{x\in I} V_x$ such that
   \begin{equation} \label{eq:inverse limit}
      \begin{cases} 
      V_{xz}(u_x) = v \quad \mbox{ for all } x\in I,\\
      V_{xy}(u_x) = u_y \quad \mbox{ for all } x,y\in I,\ x\leq y.   
      \end{cases}
   \end{equation}
\end{lemma}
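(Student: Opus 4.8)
The plan is to use Zorn's lemma on the set of "partial compatible lifts." First I would fix notation: for a subset $J \subseteq I$, call a family $(u_x)_{x\in J} \in \prod_{x\in J} V_x$ \emph{admissible} if $V_{xz}(u_x) = v$ for all $x\in J$ and $V_{xy}(u_x) = u_y$ for all $x,y\in J$ with $x\leq y$. Let $\mathbf{S}$ be the set of pairs $(J, (u_x)_{x\in J})$ with $J$ a \emph{down-set} in $I$ (i.e.\ if $y\in J$, $x\in I$, $x\leq y$, then $x\in J$) — wait, actually one does not even need down-sets; just take $J \subseteq I$ arbitrary and $(u_x)_{x\in J}$ admissible. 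Order $\mathbf{S}$ by extension: $(J, (u_x)) \leq (J', (u'_x))$ if $J\subseteq J'$ and $u'_x = u_x$ for $x\in J$. This is nonempty since $(\{z\}, (v)) \in \mathbf{S}$ (note $z\in I$ because $V_{zz}(v)=v$), and any chain has the obvious union as an upper bound. So a maximal element $(J, (u_x)_{x\in J})$ exists.

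The heart of the argument is showing $J = I$. Suppose not, and pick $w \in I - J$. I want to extend the admissible family to $J \cup \{w\}$, which will contradict maximality. The element $u_w$ I seek must satisfy: $V_{wz}(u_w) = v$; $V_{wx}(u_w) = u_x$ for all $x\in J$ with $w\leq x$; and $V_{xw}(u_x) = u_w$ for all $x\in J$ with $x \leq w$. The key point — and this is where pointwise finite dimensionality and total ordering enter — is to produce such $u_w$. Consider the affine subspace $A = V_{wz}^{-1}(v) \subseteq V_w$, which is nonempty by definition of $I$. For each $x\in J$ with $x\leq w$, the element $V_{xw}(u_x)$ lies in $A$ (since $V_{wz}(V_{xw}(u_x)) = V_{xz}(u_x) = v$); and by compatibility of the family on $J$ and totality of the order, the set $\{\, V_{xw}(u_x) : x\in J,\ x\leq w\,\}$ is \emph{directed}: if $x,x'\leq w$ with (say) $x\leq x'$, then $V_{xw}(u_x) = V_{x'w}(V_{xx'}(u_x)) = V_{x'w}(u_{x'})$ — so in fact all these elements are \emph{equal}, call the common value $a$ (if the indexing set is nonempty; if empty pick any $a\in A$). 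Thus we need $u_w\in A$ with $u_w = a$ when $\{x\in J: x\leq w\}\neq\emptyset$, i.e.\ $u_w = a$ works for the "below" constraints. For the "above" constraints, we need, for each $x\in J$ with $w\leq x$, that $V_{wx}(u_w) = u_x$. Since the order is total, $\{x\in J : w\leq x\}$ is an up-set-like tail; pick the minimal such $x$ if it exists (it need not). Here I would dualize or pass to an inverse-limit argument over $\{x\in J: w< x\}$: the constraints $V_{wx}(u_w)=u_x$ pull back to constraints on the affine subspace $A$; by pointwise finite dimensionality, the descending chain of affine subspaces $A \cap V_{wx}^{-1}(u_x)$ (over $x\in J$, $w<x$, ordered by increasing $x$, so the preimages \emph{decrease}... let me recheck direction) stabilizes, and one checks each is nonempty because the given family on $J$ is compatible — hence their intersection is nonempty and, intersected with the "below" requirement $\{a\}$ if applicable, still nonempty by compatibility of $a$ with the $u_x$.

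The main obstacle I expect is the interaction of the "below" constraint (forcing $u_w = a$) with the "above" constraints (a nonempty intersection of finitely-stabilizing affine subspaces): one must verify $a$ itself satisfies $V_{wx}(a) = u_x$ for all $x\in J$ with $w< x$, which follows because $a = V_{x_0 w}(u_{x_0})$ for some $x_0 \leq w < x$, so $V_{wx}(a) = V_{x_0 x}(u_{x_0}) = u_x$ by admissibility of the family on $J$ — so when the "below" set is nonempty the "above" constraints are automatic, and when it is empty we only need the descending-chain/finite-dimensionality argument to produce $u_w$. The genuinely essential inputs are therefore: (i) totality of the order, to make the relevant sets directed/linear so that compatibility propagates; (ii) pointwise finite dimensionality, so that a descending chain of nonempty affine subspaces of $V_w$ has nonempty intersection. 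I would organize the write-up so these two uses are isolated and clearly flagged.
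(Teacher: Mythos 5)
Your overall framework -- Zorn's lemma on \emph{partial admissible families} $(J,(u_x)_{x\in J})$ with $J\subseteq I$ -- is genuinely different from the paper's, and it contains a gap that actually breaks the argument. The paper's proof runs Zorn's lemma on families of nonempty \emph{affine subspaces} $\{A_x\}_{x\in I}$ defined over \emph{all} of $I$ at once (starting from $A_x=V_{xz}^{-1}(v)$, which is nonempty for every $x\in I$ by definition of $I$), ordered by pointwise reverse inclusion, and then shows that a maximal element is a family of singletons. Your version instead picks actual vectors over a subset and tries to grow the subset one point at a time. The problem is precisely the nonemptiness you assert near the end.

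Concretely, in the case where $\{x\in J : x\leq w\}=\emptyset$ you must show that $V_{wx}^{-1}(u_x)\neq\emptyset$ for the various $x\in J$ with $x>w$, and you claim this ``follows because the family on $J$ is compatible.'' It does not: compatibility of $(u_x)_{x\in J}$ on $J$ gives no information at points $w\notin J$. For a counterexample, take $P=\{w<x<z\}$, $V_w=k$, $V_x=k^2$, $V_z=k$, with $V_{wx}(a)=(a,0)$ and $V_{xz}(a,b)=a$, and $v=1$, so $I=\{w,x,z\}$. The pair $(J,(u_x))$ with $J=\{x,z\}$, $u_z=1$, $u_x=(1,1)$ is admissible, and it is \emph{maximal} in your poset $\mathbf{S}$, because $u_x=(1,1)\notin\operatorname{im}(V_{wx})$ so there is no $u_w$ with $V_{wx}(u_w)=u_x$. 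Yet $J\neq I$. Zorn's lemma gives you \emph{some} maximal element, and that element may be a dead end like this one, so ``maximal $\Rightarrow J=I$'' fails. (You also have the direction of the chain backwards: $V_{wx}^{-1}(u_x)\subseteq V_{wx'}^{-1}(u_{x'})$ when $x<x'$, so the preimages \emph{increase} with $x$; but the fatal issue is nonemptiness, not monotonicity.) The paper's formulation sidesteps this entirely: by keeping an affine subspace of allowed values at every $x\in I$ and shrinking them compatibly, it never commits to a choice at one point that could be incompatible with extending to another. If you want to salvage a ``pick actual vectors'' approach, you would need to build in some nondegeneracy (e.g., require that $u_x$ lie in $\bigcap_{y\leq x,\ y\in I}\operatorname{im}(V_{yx})$) -- but at that point you are essentially re-deriving the paper's affine-subspace bookkeeping.
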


\begin{proof}
   Let $\mathbf{S}$ be the set of families $A=\{A_x\}$, indexed by $x\in I$, such that each $A_x$ is an affine subspace of $V_x$ and the following conditions are satisfied:
   \begin{equation} \label{eq:conditions on family}
      \begin{cases}
         V_{xz}(A_x)=\{v\} \quad \mbox{ for all }x\in I; \\

         V_{xy}(A_x)\subseteq A_y \quad \mbox{ for all } x,y\in I,\ x\leq y.
      \end{cases}
   \end{equation}
   The set $\mathbf{S}$ is nonempty since it contains the family $E=\{E_x\}$, where $E_x=V_{xz}^{-1}(v)$ for each $x\in I$.

   In the rest of the proof, we write $A, B$ for families $\{A_x\}, \{B_x\}$, respectively.

   Define a partial order on $\mathbf{S}$ by 
   \[ B \leq A \quad \mbox{ if} \quad A_x \subseteq B_x \mbox{ for each }x\in I. \] 

  \begin{claim} \label{claim:inverse limit upper bound}
      Every chain in $\mathbf{S}$ has an upper bound. More precisely:
   
      \begin{quote} 
         Let $\mathbf{T}$ be a chain in $\mathbf{S}$. Define a family $A$ by
         \[ A_x = \bigcap_{B\in \mathbf{T}} B_x  \quad \mbox{ for each }x \in I. \]           
      Then $A$ lies in $\mathbf{S}$ and is an upper bound of $\mathbf{T}$.
      \end{quote}
   \end{claim}

   \begin{proof}[Proof of Claim \ref{claim:inverse limit upper bound}]
      Fix $x\in I$. Choose $B\in \mathbf{T}$ such that $\dim B_x$ is minimal. Then $A_x=B_x$, which implies that $A_x$ is a (nonempty) affine subspace of $V_x$. 
      
      It is easy to see that $A$ satisfies the conditions in \eqref{eq:conditions on family}. Clearly $A$ is an upper bound of $\mathbf{T}$. Claim \ref{claim:inverse limit upper bound} is proved.
   \end{proof}

   \begin{claim} \label{claim:maximal surjective}
      Let $A\in \mathbf{S}$. Assume that $A$ is maximal in $\mathbf{S}$. Then 
      \[ V_{x,y}(A_x)=A_y \quad \mbox{ for all } x,y\in I,\ x\leq y. \]    
   \end{claim}

   \begin{proof}[Proof of Claim \ref{claim:maximal surjective}]
      Fix $x\in I$. Define a family $B$ by 
      \[ B_y = \begin{cases}
         A_y & \mbox{ if } y\leq x,\\
         V_{x,y}(A_x) & \mbox{ if } y>x.      
      \end{cases} \]
      Then $B$ lies in $\mathbf{S}$. Clearly $A\leq B$. By maximality of $A$, we have $B=A$; in particular $B_y=A_y$ for all $y\in I$, hence 
      \[  V_{x,y}(A_x)=A_y \quad \mbox{ for all } y\in I,\ x\leq y. \]
      Claim \ref{claim:maximal surjective} is proved. 
   \end{proof}

   \begin{claim} \label{claim:maximal singleton}
      Let $A\in \mathbf{S}$. Assume that $A$ is maximal in $\mathbf{S}$. Then $|A_x|=1$ for all $x\in I$.
   \end{claim}

   \begin{proof}[Proof of Claim \ref{claim:maximal singleton}]
      Fix $x\in I$. Choose any element $u\in A_x$. 
      
      By Claim \ref{claim:maximal surjective}, we know that $A_y\cap V_{y,x}^{-1}(u)$ is nonempty for all $y\in I$, $y\leq x$. 
      
      Define a family $B$ by 
      \[ B_y = \begin{cases}
         A_y \cap V_{y,x}^{-1}(u) & \mbox{ if } y\leq x,\\
         A_y & \mbox{ if } y>x.      
      \end{cases} \]
      Then $B$ lies in $\mathbf{S}$. Clearly $A\leq B$. By maximality of $A$, we have $A=B$; in particular $A_x=B_x$, hence $A_x=\{u\}$.
      Claim \ref{claim:maximal singleton} is proved.
   \end{proof}

   We can now finish the proof of Lemma \ref{lem:inverse limit}. By Claim \ref{claim:inverse limit upper bound}, we can apply Zorn's lemma to deduce that there exists $A\in \mathbf{S}$ which is maximal in $\mathbf{S}$. By Claim \ref{claim:maximal singleton}, we know that for each $x\in I$, there exists $u_x\in V_x$ such that $A_x = \{u_x\}$. Since $A$ satisfies the conditions in \eqref{eq:conditions on family}, it follows that $(u_x)$ satisfies the conditions in \eqref{eq:inverse limit}.
\end{proof}

\begin{deflemma}
   Let $P$ be a totally ordered set. Let $V$ be a pointwise finite dimensional persistence module indexed by $P$. Let $z\in P$. Then
   \begin{enumerate}
      \item $\left\{ \im(V_{x,z}) \mid x\in P,\ x\leq z \right\}$ is a flag in $V_z$, called the \emph{flag of images} of $V$ at $z$; 
      \item $\left\{ \ker(V_{z,x}) \mid x\in P,\ z\leq x \right\}$ is a flag in $V_z$, called the \emph{flag of kernels} of $V$ at $z$.
   \end{enumerate}
\end{deflemma}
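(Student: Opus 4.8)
The plan is to verify directly that each of the two families is a chain in $\mathcal{S}(V_z)$, i.e. that any two of its members are comparable under inclusion. Since $P$ is totally ordered, any two indices appearing in either family are comparable, so it suffices to check the following two monotonicity statements: if $x \le x' \le z$ then $\im(V_{x,z}) \subseteq \im(V_{x',z})$; and if $z \le x \le x'$ then $\ker(V_{z,x}) \subseteq \ker(V_{z,x'})$.

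For part (1), the key observation is functoriality: when $x \le x' \le z$ we have $V_{x,z} = V_{x',z} \circ V_{x,x'}$. Hence every vector of the form $V_{x,z}(v)$ equals $V_{x',z}(V_{x,x'}(v))$, which lies in $\im(V_{x',z})$. This gives $\im(V_{x,z}) \subseteq \im(V_{x',z})$, so the family in (1) is totally ordered by inclusion, hence a chain, hence a flag. For part (2), again by functoriality, when $z \le x \le x'$ we have $V_{z,x'} = V_{x,x'} \circ V_{z,x}$; thus if $v \in \ker(V_{z,x})$, i.e. $V_{z,x}(v) = 0$, then $V_{z,x'}(v) = V_{x,x'}(0) = 0$, so $v \in \ker(V_{z,x'})$. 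This gives $\ker(V_{z,x}) \subseteq \ker(V_{z,x'})$, so the family in (2) is also a chain, hence a flag.

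There is essentially no obstacle here: the statement is a routine consequence of the definition of a flag as a chain in $\mathcal{S}(V_z)$ together with the functoriality relations $V_{x,z} = V_{x',z}\circ V_{x,x'}$ and $V_{z,x'} = V_{x,x'}\circ V_{z,x}$, and the fact that comparability of the relevant subspaces reduces to comparability of indices, which is guaranteed because $P$ is totally ordered. The only mild point worth stating explicitly is why total ordering of $P$ matters: for a general poset the images (resp. kernels) attached to incomparable indices need not be comparable, so the families would merely be sets of subspaces rather than chains; the hypothesis that $P$ is totally ordered is exactly what upgrades them to flags. I would write the proof as two short paragraphs, one for each part, each consisting of the functoriality identity followed by the one-line inclusion it yields.
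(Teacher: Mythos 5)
Your proof is correct and follows exactly the same route as the paper: use the functoriality identities $V_{x,z} = V_{x',z}\circ V_{x,x'}$ and $V_{z,x'} = V_{x,x'}\circ V_{z,x}$ to deduce the inclusions of images and kernels, and then appeal to the total order on $P$ to conclude the families are chains. The paper states this even more tersely, but the content is identical.
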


\begin{proof}
(1) If $x\leq y\leq z$ in $P$, then $V_{x,z} = V_{y,z}\circ V_{x,y}$, hence $\im(V_{x,z}) \subseteq \im(V_{y,z})$. 

(2) If $z\leq x\leq y$ in $P$, then $V_{z,y} = V_{x,y}\circ V_{z,x}$, hence $\ker(V_{z,x}) \subseteq \ker(V_{z,y})$. 
\end{proof}

\begin{lemma} \label{lem:decomposition with maximal element}
   Let $P$ be a totally ordered set. Let $V$ be a pointwise finite dimensional persistence module indexed by $P$. Assume that $P$ has a maximal element $z$ and $V_z\neq 0$. Let $B$ be a basis of $V_z$ compatible with the flag of images of $V$ at $z$. Let $v\in B$. Then there exists an internal direct sum decomposition $V=U\oplus W$ such that:
   \begin{enumerate}
      \item $U$ is an interval module;
      \item $\{v\}$ is a basis of $U_z$;
      \item $B-\{v\}$ is a basis of $W_z$.
   \end{enumerate}
\end{lemma}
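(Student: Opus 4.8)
The plan is to build the interval module $U$ by choosing, for each index $x \le z$, a one-dimensional subspace $U_x \subseteq V_x$, and then produce a complementary submodule $W$ using the flag-compatibility of $B$. First I would dispose of indices $x$ with $x \le z$ but $v \notin \im(V_{x,z})$: for these we must set $U_x = 0$, and indeed the interval $I$ on which $U$ is supported will be $I = \{x \in P : x \le z,\ v \in \operatorname{im}(V_{x,z})\}$ — this is an interval in a totally ordered set (it is clearly convex: if $x \le y \le z$ and $v \in \operatorname{im}(V_{x,z}) = \operatorname{im}(V_{y,z}\circ V_{x,y}) \subseteq \operatorname{im}(V_{y,z})$; connectedness is automatic since $P$ is totally ordered). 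Note $z \in I$.

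The key step is to invoke Lemma \ref{lem:inverse limit}: since $v \in V_z$, applying the lemma (with the roles of the poset restricted appropriately — really applying it to the down-set $\{x : x \le z\}$) gives a coherent family $(u_x)_{x \in I}$ with $V_{x,z}(u_x) = v$ for all $x \in I$ and $V_{x,y}(u_x) = u_y$ for $x \le y$ in $I$. I would then define $U$ by $U_x = k\cdot u_x$ for $x \in I$ and $U_x = 0$ for $x \notin I$, with structure maps the restrictions of the $V_{x,y}$; the coherence relations make this a well-defined submodule, and since $V_{x,z}(u_x) = v \ne 0$ each $u_x$ is nonzero, so $U \cong M(I)$, giving (1), and $\{v\}$ is a basis of $U_z$, giving (2).

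For the complement, the main obstacle is to construct a submodule $W$ with $V = U \oplus W$ and $B - \{v\}$ a basis of $W_z$. The plan is: for each $x \le z$, the flag of images of $V$ at $z$ restricted to $\operatorname{im}(V_{x,z})$, pulled back... more precisely, one wants at each $x$ a complement $W_x$ to $U_x$ inside $V_x$ that is preserved by all the maps. I would proceed by choosing $W_x$ compatibly. At $x = z$, set $W_z = \operatorname{span}(B - \{v\})$. For $x < z$ with $x \in I$: note that $V_{x,z}$ maps $\operatorname{im}(V_{x,z})$ onto itself and $v \in \operatorname{im}(V_{x,z})$; since $B$ is compatible with the flag of images, $B \cap \operatorname{im}(V_{x,z})$ is a basis of $\operatorname{im}(V_{x,z})$ containing $v$, and one chooses a basis of $V_x$ whose image structure is controlled. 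Here I expect to need a refinement of Lemma \ref{lem:basis for two flags} or a direct Zorn/induction argument: choose at each level a subspace $W_x \subseteq \ker(V_{x,z}) \oplus (\text{a lift of } W_z \cap \operatorname{im}(V_{x,z}))$... The cleanest route is probably to argue that it suffices to find, for each $x \le z$, a subspace $W_x$ with $V_x = U_x \oplus W_x$, $V_{x,y}(W_x) \subseteq W_y$ for $x \le y \le z$, and $V_{x,z}(W_x) = W_z \cap \operatorname{im}(V_{x,z})$ — and then set $W_x = V_x$ for $x \not\le z$ if $P$ has other elements (but $z$ is maximal, so in fact $x \le z$ for all $x$, simplifying matters). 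Building this coherent family of complements is where the real work lies, and I would do it by a Zorn's lemma argument on partial families of complements $\{W_x\}$ indexed by down-sets, analogous to the proof of Lemma \ref{lem:inverse limit}, using at each extension step that $\ker(V_{x,z})$ together with any complement downstairs assembles correctly because $B$ being flag-compatible guarantees the relevant subspaces of $V_z$ split off the line $kv$. Once $W$ is constructed, checking $V = U \oplus W$ pointwise is routine from $V_x = U_x \oplus W_x$, and (3) holds by construction.
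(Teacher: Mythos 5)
Your construction of $I$ and $U$ is exactly the paper's: form $I=\{x\le z:\ v\in\operatorname{im}(V_{x,z})\}$, apply Lemma \ref{lem:inverse limit} to get a coherent family $(u_x)_{x\in I}$ lifting $v$, and let $U$ be the image of the resulting monomorphism $M(I)\to V$. Where you diverge is in the construction of the complement $W$, and this is where your proposal has a genuine gap. The paper does not need any Zorn's lemma argument for $W$: it simply sets $D=\operatorname{span}(B-\{v\})\subseteq V_z$ and \emph{defines} $W_x = V_{x,z}^{-1}(D)$ for every $x$. This is automatically a submodule (since $V_{y,z}\circ V_{x,y}=V_{x,z}$ forces $V_{x,y}(W_x)\subseteq W_y$), it has $W_z=D$, and the flag-compatibility of $B$ is used exactly once, to verify $V_x=U_x\oplus W_x$: if $x\notin I$ then $v\notin\operatorname{im}(V_{x,z})$, so compatibility gives $\operatorname{im}(V_{x,z})\subseteq D$ and hence $W_x=V_x$; if $x\in I$ then $V_{x,z}(u_x)=v\notin D$ gives $U_x\cap W_x=0$, while $V_{x,z}$ induces an injection $V_x/W_x\hookrightarrow V_z/D$, so $\dim(V_x/W_x)\le 1$ and the sum is all of $V_x$.

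Your proposal instead asks for a coherent family of complements $\{W_x\}$ with $V_x=U_x\oplus W_x$, $V_{x,y}(W_x)\subseteq W_y$, and $V_{x,z}(W_x)=W_z\cap\operatorname{im}(V_{x,z})$, to be built by Zorn's lemma on partial families. You explicitly say ``building this coherent family of complements is where the real work lies'' and then do not do that work: you gesture at the extension step (``$\ker(V_{x,z})$ together with any complement downstairs assembles correctly because $B$ being flag-compatible guarantees the relevant subspaces of $V_z$ split off the line $kv$'') without making it precise. That missing step is precisely the content of the lemma. The preimage definition $W_x=V_{x,z}^{-1}(D)$ is the key observation you did not find, and it makes the Zorn machinery unnecessary: the only place Zorn is needed in this section is in Lemma \ref{lem:inverse limit}, which you already invoke for $U$.
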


\begin{proof}
   Let 
   \[ I = \left\{ x \in P \mid V_{xz}^{-1}(v)\neq \emptyset \right\}. \] 
   Clearly $I$ is an interval. 
   
   By Lemma \ref{lem:inverse limit}, there exists $(u_x)\in \prod_{x\in I} V_x$ such that the conditions in \eqref{eq:inverse limit} are satisfied. Thus there exists a monomorphism $f: M(I) \to V$ such that for all $x\in I$,
   \[ f_x: k \to V_x, \qquad \lambda \mapsto \lambda u_x. \]
   Let $U=\im(f)$. Then $U$ is an interval module and $\{v\}$ is a basis of $U_z$.

   Let $D$ be the linear subspace of $V_z$ spanned by $B-\{v\}$. Let $W$ be the submodule of $V$ defined by
   \[ W_x = V_{xz}^{-1}(D) \quad \mbox{ for each } x\in P. \] 
   
   Then $B-\{v\}$ is a basis of $W_z$.

   It remains to see that $V_x = U_x \oplus W_x$ for each $x\in P$. There are two cases to consider:

   \textbf{Case 1}: $x\notin I$.

   Then we have $U_x=0$. Also,
   \begin{align*} 
      x\notin I \quad  &\Longrightarrow \quad v\notin \im(V_{x,z}) \\ 
      &\Longrightarrow \quad  B\cap \im(V_{x,z}) = \left(B-\{v\}\right) \cap \im(V_{x,z}).
   \end{align*} 
   Since $B$ is compatible with the flag of images of $V$ at $z$, it follows that 
   \[ \left(B-\{v\}\right)\cap \im(V_{x,z}) \] 
   is a basis of $\im(V_{x,z})$. Hence $\im(V_{x,z})\subseteq D$, which implies that $V_x=W_x$. Hence $V_x = U_x\oplus W_x$.

   \textbf{Case 2}: $x\in I$. 

   In this case, 
   \[ V_{x,z}(u_x) = v \quad \Longrightarrow \quad V_{x,z}(u_x) \notin D \quad \Longrightarrow \quad U_x \cap W_x = 0. \]
   But $V_{x,z}$ induces an injective map from $V_x/W_x$ to $V_z/D$, implying that
   \[ \dim(V_x/W_x) \leq 1. \] 
   Hence $V_x= U_x\oplus W_x$.  
\end{proof}

\begin{notation}
   For any poset $P$, we write $P^{\op}$ for the opposite poset of $P$.
\end{notation}

\begin{definition} \label{def:dual persistence module}
   Let $P$ be a poset. If $V$ is a persistence module indexed by $P$, then let $V^*$ be the persistence module indexed by $P^{\op}$ defined as follows:
   \begin{enumerate}
      \item $V^*_x = (V_x)^*$ for all $x\in P^{\op}$; 
      \item $V^*_{y,x} = {^t V_{x,y}}$ for all $x\geq y$ in $P^{\op}$.
   \end{enumerate}
\end{definition}

\begin{lemma} \label{lem:orthogonal module}
   Let $P$ be a poset. Let $V$ be a persistence module indexed by $P$. Let $\mathfrak{W}$ be a submodule of $V^*$. Then $V$ has a submodule $W$ defined by 
   \begin{equation} \label{eq:orthogonal module}
      W_x={^\perp\mathfrak{W}_x} \qquad \mbox{ for all }x\in P. 
   \end{equation}
   Moreover, we have $(V/W)^* \cong \mathfrak{W}$.
\end{lemma}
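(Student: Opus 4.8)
The plan is to verify the two assertions in turn, relying on Fact \ref{fact:transpose}(2) for the functoriality and on Fact \ref{fact:direct sum dual}(3) for the isomorphism at the end. First I would check that the prescription $W_x = {^\perp\mathfrak{W}_x}$ actually defines a submodule of $V$, i.e.\ that for each $x \leq y$ in $P$ we have $V_{x,y}(W_x) \subseteq W_y$. Since $\mathfrak{W}$ is a submodule of $V^*$ (which is indexed by $P^{\op}$), we have $V^*_{y,x}(\mathfrak{W}_y) \subseteq \mathfrak{W}_x$ for $x \leq y$ in $P$, and by Definition \ref{def:dual persistence module} this says ${^t V_{x,y}}(\mathfrak{W}_y) \subseteq \mathfrak{W}_x$. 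Applying Fact \ref{fact:transpose}(2) with $f = V_{x,y}$, $\mathfrak{D}_1 = \mathfrak{W}_x$, $\mathfrak{D}_2 = \mathfrak{W}_y$ gives exactly $V_{x,y}({^\perp\mathfrak{W}_x}) \subseteq {^\perp\mathfrak{W}_y}$, that is $V_{x,y}(W_x) \subseteq W_y$. So $W$ together with the restricted structure maps is a genuine submodule of $V$.

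Next I would construct the isomorphism $(V/W)^* \cong \mathfrak{W}$. For each $x$, Fact \ref{fact:direct sum dual}(3) (applied to $E = V_x$, $D = W_x$) gives a natural bijective $k$-linear map $(V_x/W_x)^* \to W_x^\perp = ({^\perp\mathfrak{W}_x})^\perp$; and $({^\perp\mathfrak{W}_x})^\perp = \mathfrak{W}_x$ since $\mathfrak{W}_x$ is a subspace of the finite dimensional space $V_x^*$ — wait, here one must be slightly careful, since Fact \ref{fact:direct sum dual} assumes $E$ finite dimensional, whereas the lemma as stated does not assume $V$ is pointwise finite dimensional. If the paper intends this lemma only in the pointwise finite dimensional setting (which is how it will be used), the double-perp identity ${^\perp\mathfrak{W}_x})^\perp = \mathfrak{W}_x$ is immediate from Fact \ref{fact:direct sum dual}(1)--(2); otherwise one would need the elementary fact that $(\,{^\perp\mathfrak{D}})^\perp = \mathfrak{D}$ for any subspace $\mathfrak{D}$ of a dual space, which holds in general. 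In either case we obtain componentwise isomorphisms $\phi_x : (V/W)^*_x \xrightarrow{\ \sim\ } \mathfrak{W}_x$.

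Finally I would check that the $\phi_x$ assemble into a morphism of persistence modules indexed by $P^{\op}$, i.e.\ that they are compatible with the structure maps ${^t(V/W)_{x,y}}$ on one side and $V^*_{y,x} = {^t V_{x,y}}$ restricted to $\mathfrak{W}$ on the other. This is a diagram chase: unwinding the definition $\phi_x(\theta) = \theta \circ \pi_x$ where $\pi_x : V_x \to V_x/W_x$ is the quotient map, one reduces to the identity $\pi_y \circ V_{x,y} = (V/W)_{x,y} \circ \pi_x$, which is just the definition of the induced map on the quotient module. I expect the main obstacle to be purely bookkeeping: keeping straight the variance (everything on the dual side is indexed by $P^{\op}$, so inclusions of the $\mathfrak{W}_x$ go the opposite way from inclusions of the $W_x$) and making sure the naturality square for $\phi$ is set up with the arrows pointing the right way. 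There is no real mathematical difficulty once Fact \ref{fact:transpose}(2) and Fact \ref{fact:direct sum dual}(3) are in hand.
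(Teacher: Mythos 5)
Your plan follows the paper's proof essentially step for step: Fact \ref{fact:transpose}(2) to show that $W_x = {^\perp\mathfrak{W}_x}$ defines a submodule, Fact \ref{fact:direct sum dual}(3) for the componentwise isomorphisms $\phi_x: (V/W)_x^* \to W_x^\perp$, the identification $W_x^\perp = \mathfrak{W}_x$, and finally the naturality check reducing to $\pi_y \circ V_{x,y} = (V/W)_{x,y} \circ \pi_x$. You are also right to flag the missing finite dimensionality hypothesis: the lemma as stated has no pointwise finite dimensional assumption, yet the proof invokes Fact \ref{fact:direct sum dual}, which is stated only for finite dimensional $E$. This causes no trouble in the paper because the lemma is applied only to a pointwise finite dimensional $V$ (inside Lemma \ref{lem:decomposition with minimal element}), but the hypothesis really belongs in the statement.

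However, the fallback you offer for the infinite dimensional case is incorrect: the identity $({^\perp\mathfrak{D}})^\perp = \mathfrak{D}$ does \emph{not} hold for an arbitrary subspace $\mathfrak{D}$ of a dual space. One always has $\mathfrak{D} \subseteq ({^\perp\mathfrak{D}})^\perp$, but the inclusion can be strict. For instance take $E = k^{(\mathbb{N})}$, so $E^* \cong k^{\mathbb{N}}$, and let $\mathfrak{D}$ be the subspace of eventually zero sequences in $E^*$. Then ${^\perp\mathfrak{D}} = 0$, so $({^\perp\mathfrak{D}})^\perp = E^*$, which strictly contains $\mathfrak{D}$. So the step $W_x^\perp = \mathfrak{W}_x$ genuinely needs $\dim V_x < \infty$; that is not an optional convenience but the crux of why Fact \ref{fact:direct sum dual}(1)--(2) is the right tool. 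With that hypothesis in place, the rest of your argument is correct and coincides with the paper's.
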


\begin{proof}
   It is immediate from Fact \ref{fact:transpose}(2) that \eqref{eq:orthogonal module} defines a submodule $W$ of $V$. Let $\pi : V\to V/W$ be the quotient map. By Fact \ref{fact:direct sum dual}, we know that for each $x\in P$, we have:
   \begin{enumerate}
      \item $\mathfrak{W}_x= W_x^\perp$,
      \item a natural bijective $k$-linear map
   \[ f_x: (V/W)_x^* \to W_x^\perp, \qquad \eta \mapsto \eta\circ \pi_x. \]
   \end{enumerate}
   We claim that this gives us an isomorphism $f: (V/W)^* \to \mathfrak{W}$.

   Let $x\geq y$ in $P^{\op}$. We need to check that 
   \[ f_x \circ (V/W)^*_{y,x} = V^*_{y,x} \circ f_y. \]
   For all $\theta\in (V/W)_y^*$, we have:
   \begin{align*}
            (f_x \circ (V/W)^*_{y,x})(\theta) 
            &= ( (V/W)^*_{y,x} (\theta)  ) \circ \pi_x \\ 
            &= \theta \circ (V/W)_{x,y} \circ \pi_x \\
            &= \theta \circ \pi_y \circ V_{x,y} \\ 
            &=  (f_y(\theta)) \circ V_{x,y} \\
            &= (V^*_{y,x} \circ f_y  )(\theta).
   \end{align*}
\end{proof}

\begin{lemma} \label{lem:dual basis is compatible}
   Let $P$ be a totally ordered set. Let $V$ be a pointwise finite dimensional persistence module indexed by $P$. Let $z\in P$ and $B$ a basis of $V_z$ compatible with the flag of kernels of $V$ at $z$. Let $B^*$ be the dual basis to $B$. Then $B^*$ is a basis of $V^*_z$ compatible with the flag of images of $V^*$ at $z$.
\end{lemma}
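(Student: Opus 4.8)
The plan is to reduce the statement to an elementary fact about dual bases, once we have identified the flag of images of $V^*$ at $z$ explicitly. Note first that $V^*$ is pointwise finite dimensional, since $\dim V^*_x = \dim(V_x)^* = \dim V_x$ for all $x$; so the flag of images of $V^*$ at $z$ is defined, and $B^*$ is a basis of $V^*_z$. Now unwind Definition \ref{def:dual persistence module}: the flag of images of $V^*$ at $z$ is indexed by those $x$ with $x\leq z$ in $P^{\op}$, i.e.\ $z\leq x$ in $P$, and for such $x$ the structure map $V^*_{x,z}$ of $V^*$ equals ${}^t V_{z,x}$, where $V_{z,x}:V_z\to V_x$. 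By Fact \ref{fact:transpose}(1), $\im(V^*_{x,z}) = \im({}^t V_{z,x}) = (\ker V_{z,x})^\perp$. Hence the flag of images of $V^*$ at $z$ is precisely $\{\, F^\perp \mid F\in\mathcal{G} \,\}$, where $\mathcal{G}=\{\, \ker(V_{z,x}) \mid x\in P,\ z\leq x \,\}$ is the flag of kernels of $V$ at $z$.

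It therefore suffices to prove the following linear-algebra statement: if $E$ is a finite dimensional $k$-vector space, $B$ a basis of $E$ with dual basis $B^*$, and $D\in\mathcal{S}(E)$ is such that $B\cap D$ is a basis of $D$, then $B^*\cap D^\perp$ is a basis of $D^\perp$. To see this, write $B=\{b_i\}_{i\in\Lambda}$ and $B^*=\{b_i^*\}_{i\in\Lambda}$ with $b_i^*(b_j)$ equal to $1$ if $i=j$ and to $0$ otherwise, and set $\Lambda_D=\{\, i\in\Lambda \mid b_i\in D \,\}$. Since $\{\, b_i \mid i\in\Lambda_D \,\}$ spans $D$, we have $b_j^*\in D^\perp$ whenever $j\notin\Lambda_D$; and for $j\in\Lambda_D$ we have $b_j\in D$ and $b_j^*(b_j)=1$, so $b_j^*\notin D^\perp$. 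Thus $B^*\cap D^\perp=\{\, b_j^* \mid j\notin\Lambda_D \,\}$. This set is linearly independent, being part of $B^*$, and it spans $D^\perp$, because any $\alpha\in D^\perp$ expands as $\alpha=\sum_{i\in\Lambda}\alpha(b_i)b_i^*$ with $\alpha(b_i)=0$ for all $i\in\Lambda_D$. Hence it is a basis of $D^\perp$. Applying this with $E=V_z$ and with $D$ equal to each member of $\mathcal{G}$ — for which $B\cap D$ is a basis of $D$ by the compatibility hypothesis on $B$ — shows that $B^*$ is compatible with $\{\, F^\perp \mid F\in\mathcal{G} \,\}$, which by the previous paragraph is the flag of images of $V^*$ at $z$.

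I do not anticipate a real obstacle — the linear-algebra core is routine — but the one point needing care is the bookkeeping with $P^{\op}$ and the transpose: one must verify that it is the flag of \emph{kernels} of $V$ (and not of images) whose orthogonal complements form the flag of images of $V^*$, so that the hypothesis that $B$ is compatible with the flag of kernels is exactly what the argument consumes.
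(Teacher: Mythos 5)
Your proof is correct and takes essentially the same approach as the paper's: both reduce via Fact \ref{fact:transpose}(1) to showing that if $B\cap D$ is a basis of a subspace $D$ of $V_z$, then $B^*\cap D^\perp$ is a basis of $D^\perp$, and both establish this by a dual-basis argument. The only difference is cosmetic: the paper relabels the basis elements so that $v_1,\dots,v_r$ span $D$ and then invokes the duality, while you keep the index set fixed, explicitly identify $B^*\cap D^\perp$, and check spanning directly — which also makes the $D=0$ case split in the paper unnecessary.
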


\begin{proof}
   Let $x\leq z$ in $P^{\op}$. We need to show that $B^*\cap \im(V^*_{x,z})$ is a basis of $\im(V^*_{x,z})$. By Fact \ref{fact:transpose}(1), we have 
   \[ \im(V^*_{x,z}) = \left(\ker(V_{z,x})\right)^\perp, \] 
   so we need to show that $B^*\cap \left(\ker(V_{z,x})\right)^\perp$ is a basis of $\left(\ker(V_{z,x})\right)^\perp$. This is obvious if $\ker(V_{z,x})=0$, so assume that $\ker(V_{z,x})\neq 0$. 
   
   We know that $B\cap \ker(V_{z,x})$ is a basis of $\ker(V_{z,x})$. Let $v_1, v_2, \ldots, v_n$ be the elements of $B$, and $v^*_1, v^*_2, \ldots, v^*_n$ the corresponding elements of $B^*$. Relabelling the elements of $B$ if necessary, we may assume that $v_1, \ldots, v_r$ (for some $r$) form a basis of $\ker(V_{z,x})$. Then $v^*_{r+1}, \ldots, v^*_n$ form a basis of $\left(\ker(V_{z,x})\right)^\perp$. This proves the desired result.
\end{proof}

\begin{lemma} \label{lem:decomposition with minimal element}
   Let $P$ be a totally ordered set. Let $V$ be a pointwise finite dimensional persistence module indexed by $P$. Assume that $P$ has a minimal element $z$ and $V_z\neq 0$. Let $B$ be a basis of $V_z$ compatible with the flag of kernels of $V$ at $z$. Let $v\in B$. Then there exists an internal direct sum decomposition $V=U\oplus W$ such that:
   \begin{enumerate}
      \item $U$ is an interval module;
      \item $\{v\}$ is a basis of $U_z$;
      \item $B-\{v\}$ is a basis of $W_z$.
   \end{enumerate}
\end{lemma}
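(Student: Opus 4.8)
The plan is to deduce this lemma from Lemma \ref{lem:decomposition with maximal element} by passing to linear duals. Since $z$ is a minimal element of $P$, it is a maximal element of $P^{\op}$, and $V^*$ is a pointwise finite dimensional persistence module indexed by the totally ordered set $P^{\op}$ with $V^*_z=(V_z)^*\neq 0$. Let $B^*$ be the dual basis of $V^*_z$ and let $v^*\in B^*$ be the element dual to $v$. By Lemma \ref{lem:dual basis is compatible}, $B^*$ is compatible with the flag of images of $V^*$ at $z$, so Lemma \ref{lem:decomposition with maximal element} applies to $V^*$, $B^*$, $v^*$ and produces an internal direct sum decomposition $V^*=\mathfrak{U}\oplus\mathfrak{W}$ in which $\mathfrak{U}$ is an interval module, $\{v^*\}$ is a basis of $\mathfrak{U}_z$, and $B^*-\{v^*\}$ is a basis of $\mathfrak{W}_z$.

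Next I would transport this decomposition back to $V$. Set $U_x={^\perp\mathfrak{W}_x}$ and $W_x={^\perp\mathfrak{U}_x}$ for all $x\in P$; by Lemma \ref{lem:orthogonal module} these formulas define submodules $U$ and $W$ of $V$, and moreover $(V/W)^*\cong\mathfrak{U}$. Applying Fact \ref{fact:direct sum dual}(4) to the decomposition $V^*_x=\mathfrak{U}_x\oplus\mathfrak{W}_x$ for each $x\in P$ yields $V_x={^\perp\mathfrak{U}_x}\oplus{^\perp\mathfrak{W}_x}=W_x\oplus U_x$, so $V=U\oplus W$. Consequently $U\cong V/W$, so $U^*\cong (V/W)^*\cong\mathfrak{U}$; since $\mathfrak{U}$ is an interval module, the linear dual of an interval module is again an interval module, and $U$ (being a summand of $V$) is pointwise finite dimensional, it follows that $U$ is an interval module, which is (1).

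Finally, I would obtain (2) and (3) by a direct computation with the dual basis. Since $\mathfrak{U}_z=\mathrm{span}\{v^*\}$ and $\mathfrak{W}_z=\mathrm{span}\{\,b^*\mid b\in B-\{v\}\,\}$, taking orthogonals inside $V_z$ gives $U_z={^\perp\mathfrak{W}_z}=\mathrm{span}\{v\}$ and $W_z={^\perp\mathfrak{U}_z}=\mathrm{span}(B-\{v\})$, so $\{v\}$ is a basis of $U_z$ and $B-\{v\}$ is a basis of $W_z$.

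I expect the only delicate point to be the bookkeeping of orthogonals: one must take $U={^\perp\mathfrak{W}}$, the orthogonal of the \emph{other} summand, rather than ${^\perp\mathfrak{U}}$, so that Lemma \ref{lem:orthogonal module} identifies $U^*$ with the interval module $\mathfrak{U}$ and not with $\mathfrak{W}$; and one must notice that Lemma \ref{lem:dual basis is compatible} is exactly the translation converting the hypothesis ``$B$ compatible with the flag of kernels of $V$ at $z$'' into the hypothesis ``$B^*$ compatible with the flag of images of $V^*$ at $z$'' that is needed to invoke Lemma \ref{lem:decomposition with maximal element}. Everything else is an unwinding of the definitions of $V^*$ and of ${^\perp(-)}$.
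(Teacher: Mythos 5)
Your proposal is correct and follows essentially the same route as the paper: dualize, apply Lemma \ref{lem:dual basis is compatible} and Lemma \ref{lem:decomposition with maximal element} to $V^*$, then transport the decomposition back via Lemma \ref{lem:orthogonal module} and Fact \ref{fact:direct sum dual}(4), taking the orthogonal of the \emph{opposite} summand so that $U^*$ is identified with the interval module. The only difference from the paper is a purely cosmetic swap of the labels $\mathfrak{U}$ and $\mathfrak{W}$, and your remark about the pointwise finite dimensionality and the double dual is exactly what is needed to pass from $U^*\cong\mathfrak{U}$ to $U\cong\mathfrak{U}^*$.
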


\begin{proof}
   Let $B^*$ be the dual basis to $B$. Denote the elements of $B$ by $v_1, v_2, \ldots, v_n$ where $v_1=v$. Denote the corresponding elements of $B^*$ by $v^*_1, v^*_2, \ldots, v^*_n$.  
   
   By Lemma \ref{lem:dual basis is compatible}, we know that $B^*$ is compatible with the flag of images of $V^*$ at $z$. Thus by Lemma \ref{lem:decomposition with maximal element}, there exists an internal direct sum decomposition $V^* = \mathfrak{U} \oplus \mathfrak{W}$ such that:
   \begin{enumerate}[label=(\arabic*')]
      \item $\mathfrak{W}$ is an interval module;
      \item $\{v^*_1\}$ is a basis of $\mathfrak{W}_z$;
      \item $B^*-\{v^*_1\}$ is a basis of $\mathfrak{U}_z$.
   \end{enumerate} 

   By Lemma \ref{lem:orthogonal module}, there are submodules $U, W$ of $V$ such that 
   \[ U_x = {^\perp \mathfrak{U}_x}, \qquad W_x = {^\perp \mathfrak{W}_x} \qquad \mbox{ for all }x\in P; \]
   moreover, $(V/W)^* \cong \mathfrak{W}$. 
   
   By Fact \ref{fact:direct sum dual}(4), we have $V=U\oplus W$. Therefore 
   \[ U^* \cong (V/W)^* \cong \mathfrak{W}. \]
   It follows that $U\cong \mathfrak{W}^*$ and hence by (1') that $U$ is an interval module. 

   It is plain that:
   \begin{align*}
      (3') \quad &\Longrightarrow \quad \mbox{ $\{v_1\}$ is a basis of $U_z$; }\\
      (2') \quad &\Longrightarrow \quad \mbox{ $B-\{v_1\}$ is a basis of $W_z$. }
   \end{align*}
\end{proof}

\begin{lemma} \label{lem:gluing over totally ordered set}
   Let $P$ be a totally ordered set. Let $V$ be a nonzero pointwise finite dimensional persistence module indexed by $P$. Then there exists an internal direct sum decomposition $V=U\oplus W$ such that $U$ is an interval module.
\end{lemma}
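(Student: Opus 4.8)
The plan is to choose a point $z$ where $V$ is nonzero, split $V$ separately over the down-set $P_{\leq z} := \{x\in P : x\leq z\}$ and over the up-set $P_{\geq z} := \{x\in P : x\geq z\}$ using the two decomposition lemmas already proved, arrange these two splittings so that they agree at $z$, and glue them into a single decomposition of $V$ with an interval summand.

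First I would fix $z\in P$ with $V_z\neq 0$, which is possible since $V\neq 0$. The flag of images of $V$ at $z$ involves only the maps $V_{x,z}$ with $x\leq z$, and the flag of kernels of $V$ at $z$ involves only the maps $V_{z,x}$ with $x\geq z$, so by Lemma~\ref{lem:basis for two flags} I can pick a single basis $B$ of $V_z$ compatible with both flags. Fix $v\in B$. Since $z$ is the maximal element of $P_{\leq z}$ and $B$ is compatible with the flag of images of $V|_{P_{\leq z}}$ at $z$, Lemma~\ref{lem:decomposition with maximal element} yields $V|_{P_{\leq z}} = U^-\oplus W^-$ with $U^-$ an interval module, $U^-_z = kv$, and $W^-_z$ the linear span of $B-\{v\}$. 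Dually, since $z$ is the minimal element of $P_{\geq z}$, Lemma~\ref{lem:decomposition with minimal element} yields $V|_{P_{\geq z}} = U^+\oplus W^+$ with $U^+$ an interval module, $U^+_z = kv$, and $W^+_z$ the linear span of $B-\{v\}$. The key point is that these two splittings agree at $z$: $U^-_z = U^+_z$ and $W^-_z = W^+_z$ as subspaces of $V_z$, precisely because the same basis $B$ and the same vector $v$ were used on both sides.

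Next I would define $U$ and $W$ on all of $P$ by $U_x = U^-_x,\ W_x = W^-_x$ for $x\leq z$ and $U_x = U^+_x,\ W_x = W^+_x$ for $x\geq z$; this is consistent by the preceding observation. To see $U$ and $W$ are submodules one only needs the case $x\leq z\leq y$, where $V_{x,y} = V_{z,y}\circ V_{x,z}$ and $V_{x,z}(U_x)\subseteq U_z$ because $U^-$ is a submodule over $P_{\leq z}$, while $V_{z,y}(U_z)\subseteq U_y$ because $U^+$ is a submodule over $P_{\geq z}$ (similarly for $W$). Then $V_x = U_x\oplus W_x$ for every $x\in P$: by the first splitting when $x\leq z$ and by the second when $x\geq z$. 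It remains to check that $U$ is an interval module. Its support is the union of the supports $I^-, I^+$ of $U^-, U^+$ (both containing $z$), and I would verify that this union is convex — a potential violation $x\leq y\leq w$ with $x,w$ in the support falls into the cases $w\leq z$, $x\geq z$, or $x\leq z\leq w$, each reducing to convexity of $I^-$ or of $I^+$ — hence is an interval of $P$ (connectedness being automatic in a totally ordered set). Each $U_x$ is at most one-dimensional, and after rescaling the module isomorphisms $U^-\cong M(I^-)$ and $U^+\cong M(I^+)$ so that both send $v$ to $1\in k$, they agree at $z$ and glue to an isomorphism of $U$ with a constant module.

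I expect the main obstacle to be exactly the compatibility of the two local splittings at $z$: the down-set side needs a basis of $V_z$ adapted to the flag of images while the up-set side needs one adapted to the flag of kernels, so everything hinges on being able to use a single basis adapted to both, which is Lemma~\ref{lem:basis for two flags}. Everything after that is bookkeeping — checking that the glued data define submodules, that the pointwise decompositions splice together, and that the support of $U$ is an interval.
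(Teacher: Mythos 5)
Your proposal follows exactly the same route as the paper's proof: pick $z$ with $V_z\neq 0$, use Lemma~\ref{lem:basis for two flags} to get a basis of $V_z$ compatible with both the flag of images and the flag of kernels, apply Lemma~\ref{lem:decomposition with maximal element} on the down-set and Lemma~\ref{lem:decomposition with minimal element} on the up-set with the same basis vector $v$, and glue the resulting decompositions along $z$. The only difference is that you spell out the routine checks (submodule axioms across $z$, convexity of the support, matching the two isomorphisms at $z$) that the paper dismisses with ``Clearly''; the argument is correct.
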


\begin{proof}
   Choose and fix an element $z\in P$ such that $V_z\neq 0$. Define full subposets $P', P''$ of $P$ by
   \[ P'=\{x\in P \mid x\leq z\}, \qquad P''=\{x\in P \mid z\leq x\}. \]
   Then $P'$ is a totally ordered set with maximal element $z$, while $P''$ is a totally ordered set with minimal element $z$. Let $V', V''$ be the restrictions of $V$ to $P', P''$, respectively. 

   By Lemma \ref{lem:basis for two flags}, there exists a basis $B$ of $V_z$ which is compatible with both the flag of images and the flag of kernels of $V$ at $z$. 
   Choose such a basis $B$ and let $v\in B$.

   By Lemma \ref{lem:decomposition with maximal element}, there exists an internal direct sum decomposition $V'=U'\oplus W'$ such that $U'$ is an interval module, $\{v\}$ is a basis of $U'_z$, and $B-\{v\}$ is a basis of $W'_z$.

   By Lemma \ref{lem:decomposition with minimal element}, there exists an internal direct sum decomposition $V''=U''\oplus W''$ such that $U''$ is an interval module, $\{v\}$ is a basis of $U''_z$, and $B-\{v\}$ is a basis of $W''_z$.

   Observe that $U'_z=U''_z$ and $W'_z=W''_z$. This allows us to define submodules $U, W$ of $V$ by 
   \[
      U_x = \begin{cases}
         U'_x & \mbox{ if }x\leq z,\\
         U''_x & \mbox{ if }x>z;
      \end{cases}\qquad
      W_x = \begin{cases}
         W'_x & \mbox{ if }x\leq z,\\
         W''_x & \mbox{ if }x>z.
      \end{cases}
   \]
   Clearly $V=U\oplus W$. Moreover $U$ is an interval module.
\end{proof}

We can now deduce Theorem B.

\begin{theorem} \label{thm:totally ordered case}
   Let $P$ be a totally ordered set. Let $V$ be a pointwise finite dimensional persistence module indexed by $P$. Then $V$ has an interval decomposition.
\end{theorem}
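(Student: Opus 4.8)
The plan is to deduce Theorem~\ref{thm:totally ordered case} immediately by chaining together the two criteria already in place. Lemma~\ref{lem:gluing over totally ordered set} asserts exactly that every \emph{nonzero} pointwise finite dimensional persistence module indexed by a totally ordered set $P$ admits an internal direct sum decomposition $V=U\oplus W$ with $U$ an interval module; in particular it has a direct summand which is an interval module. This is precisely the hypothesis required to invoke Lemma~\ref{lem:interval decomposition criteria} for the poset $P$. So the first step is to cite Lemma~\ref{lem:gluing over totally ordered set} to verify that hypothesis, and the second (and last) step is to apply Lemma~\ref{lem:interval decomposition criteria} to conclude that the given $V$ has an interval decomposition.

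Thus the body of the proof is only a couple of lines: let $V$ be a pointwise finite dimensional persistence module indexed by the totally ordered set $P$; by Lemma~\ref{lem:gluing over totally ordered set} the hypothesis of Lemma~\ref{lem:interval decomposition criteria} is satisfied; hence by Lemma~\ref{lem:interval decomposition criteria}, $V$ has an interval decomposition.

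There is essentially no remaining obstacle here, since all the work has been distributed across the preceding lemmas: the Zorn's lemma "gluing" argument packaged in Lemma~\ref{lem:decomposition criteria} (and its specialization Lemma~\ref{lem:interval decomposition criteria}), the inverse limit construction of Lemma~\ref{lem:inverse limit} feeding the splitting-off result for a poset with a maximal element (Lemma~\ref{lem:decomposition with maximal element}), the dualization via Lemmas~\ref{lem:orthogonal module} and~\ref{lem:dual basis is compatible} giving the analogous statement for a poset with a minimal element (Lemma~\ref{lem:decomposition with minimal element}), and the compatible-basis result of Lemma~\ref{lem:basis for two flags} used to glue the two local decompositions across a chosen point $z$ in Lemma~\ref{lem:gluing over totally ordered set}. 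The only thing to be careful about in writing up Theorem~\ref{thm:totally ordered case} itself is that the degenerate case $V=0$ is already covered, since an interval decomposition is allowed to be indexed by the empty set; and indeed Lemma~\ref{lem:interval decomposition criteria} is stated for \emph{every} (not merely nonzero) pointwise finite dimensional module, so no separate case analysis is needed.
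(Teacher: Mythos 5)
Your proposal is correct and matches the paper's proof exactly: the paper also deduces Theorem~\ref{thm:totally ordered case} immediately from Lemma~\ref{lem:gluing over totally ordered set} and Lemma~\ref{lem:interval decomposition criteria}. Your remark about the $V=0$ case being automatically handled is accurate and a fine clarification, though the paper leaves it implicit.
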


\begin{proof}
   Immediate from Lemma \ref{lem:interval decomposition criteria} and Lemma \ref{lem:gluing over totally ordered set}.
\end{proof}

\section{Zigzag posets: generalities}  \label{sec: zigzag generalities}

\begin{notation} \label{nota:gamma}
   \begin{enumerate}
      \item We write $\mathbb{N}$ for the set $\{0, 1, 2, \ldots\}$.
      \item  We write $\Gamma$ for one of the following sets:
   \[ \{1, 2, \ldots, n\}, \qquad \mathbb{N}, \qquad \mathbb{Z} \]
   (where $n\geq 2$).
   \end{enumerate} 
\end{notation}

\begin{definition} \label{def:zigzag}
   Let $P$ be a poset. Let $\{z_i \mid i\in \Gamma\}$ be a sequence of elements of $P$ indexed by $\Gamma$ (see Notation \ref{nota:gamma}). 
   We call $P$ a \emph{zigzag poset} with extrema $\{z_i \mid i\in \Gamma\}$ if there exists a total order $\sqsubseteq$ on the set $P$ satisfying the following conditions:
   \begin{enumerate}
      \item for all $i,j\in \Gamma$:
      \[ i< j \quad \Longrightarrow \quad z_i\sqsubset z_j; \]
      \item for all $x,y\in P$:
      \begin{align*}
         x\leq y \quad \Longrightarrow \quad   
         & \mbox{ there exists $i\in \Gamma$ such that } \\
         & \begin{cases} 
               i+1\in \Gamma, \\
               z_i \sqsubseteq x\sqsubseteq z_{i+1}, \\ 
               z_i \sqsubseteq y\sqsubseteq z_{i+1};
            \end{cases} 
      \end{align*}
   \item either:
   \begin{align*}
      &\mbox{ for all $i\in \Gamma$ such that $i+1\in \Gamma$, }\\ 
      &\mbox{ for all $x,y\in P$ such that $z_i\sqsubseteq x \sqsubseteq y \sqsubseteq z_{i+1}$: }\\ 
      &\begin{cases} 
         i \mbox{ is odd } \quad \Longrightarrow \quad x\leq y, \\
         i \mbox{ is even } \quad \Longrightarrow \quad  y\leq x,
      \end{cases}  
   \end{align*} 
   or:
   \begin{align*}
      &\mbox{ for all $i\in \Gamma$ such that $i+1\in \Gamma$, }\\
      &\mbox{ for all $x,y\in P$ such that $z_i \sqsubseteq x \sqsubseteq y \sqsubseteq z_{i+1}$: }\\ 
      &\begin{cases}
         i \mbox{ is odd } \quad \Longrightarrow \quad y\leq x , \\
         i \mbox{ is even } \quad \Longrightarrow \quad x\leq y.
      \end{cases}
   \end{align*}   
   \end{enumerate}
   In this case, we call $\sqsubseteq$ the \emph{associated total order}.
\end{definition}

\begin{lemma} \label{lem:zigzag shape}
   Let $P$ be a zigzag poset with extrema $\{z_i \mid i\in \Gamma\}$. 
   Let $\sqsubseteq$ be the associated total order. Then we have the following. 
   \begin{enumerate}
      \item For all $x\in P$, there exists $i\in \Gamma$ such that $i+1\in \Gamma$ and $z_i\sqsubseteq x\sqsubseteq z_{i+1}$.
    
      \item For all $x,y\in P$ such that $x\leq y$, for all $i\in \Gamma$ such that  
      \[ x\sqsubseteq z_i \sqsubseteq y \quad \mbox{ or } \quad y\sqsubseteq z_i \sqsubseteq x, \] 
      we have $x=z_i$ or $y=z_i$.

      \item For all $x,y,z\in P$ such that $x\leq y\leq z$, we have 
      \[ x\sqsubseteq y\sqsubseteq z \quad \mbox{ or } \quad z\sqsubseteq y \sqsubseteq x. \]
   \end{enumerate}
\end{lemma}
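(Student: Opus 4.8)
The plan is to prove the three parts in order, using the defining conditions (1)--(3) of Definition \ref{def:zigzag}; the proof of part (3) will also invoke part (2). For part (1), one simply applies condition (2) of Definition \ref{def:zigzag} to the relation $x\leq x$ (valid since $\leq$ is reflexive), which immediately produces an $i\in\Gamma$ with $i+1\in\Gamma$ and $z_i\sqsubseteq x\sqsubseteq z_{i+1}$.

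For part (2), suppose $x\leq y$ and let $i\in\Gamma$ satisfy $x\sqsubseteq z_i\sqsubseteq y$ (the case $y\sqsubseteq z_i\sqsubseteq x$ being entirely parallel). By condition (2) of Definition \ref{def:zigzag} there is $j\in\Gamma$ with $j+1\in\Gamma$, $z_j\sqsubseteq x\sqsubseteq z_{j+1}$, and $z_j\sqsubseteq y\sqsubseteq z_{j+1}$. Chaining these with the hypothesis yields $z_j\sqsubseteq x\sqsubseteq z_i\sqsubseteq y\sqsubseteq z_{j+1}$, hence $z_j\sqsubseteq z_i\sqsubseteq z_{j+1}$; by condition (1) of Definition \ref{def:zigzag} (together with antisymmetry and totality of $\sqsubseteq$) this forces $i=j$ or $i=j+1$. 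If $i=j$, then $z_j\sqsubseteq x\sqsubseteq z_i=z_j$, so $x=z_i$; if $i=j+1$, then $z_i\sqsubseteq y\sqsubseteq z_{j+1}=z_i$, so $y=z_i$.

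Part (3) is the main point. Given $x\leq y\leq z$ we have $x\leq z$, so by condition (2) of Definition \ref{def:zigzag} there is $k\in\Gamma$ with $k+1\in\Gamma$ and $z_k\sqsubseteq x\sqsubseteq z_{k+1}$, $z_k\sqsubseteq z\sqsubseteq z_{k+1}$. The crucial step is to show that also $z_k\sqsubseteq y\sqsubseteq z_{k+1}$. Suppose not; then $y\sqsubset z_k$ or $z_{k+1}\sqsubset y$, and we treat the first case (the second is symmetric, with $z_{k+1}$ playing the role of $z_k$). From $y\sqsubset z_k$ and $z_k\sqsubseteq x$ we get $y\sqsubseteq z_k\sqsubseteq x$, so part (2) applied to $x\leq y$ forces $x=z_k$ or $y=z_k$; since $y\neq z_k$ we obtain $x=z_k$. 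Likewise $y\sqsubseteq z_k\sqsubseteq z$ together with $y\leq z$ forces $z=z_k$. But then $x\leq y\leq z=x$ gives $x=y=z=z_k$, contradicting $y\sqsubset z_k$. Hence $z_k\sqsubseteq x\sqsubseteq z_{k+1}$, $z_k\sqsubseteq y\sqsubseteq z_{k+1}$, and $z_k\sqsubseteq z\sqsubseteq z_{k+1}$ all hold. Finally, condition (3) of Definition \ref{def:zigzag} implies that on the set of $w\in P$ with $z_k\sqsubseteq w\sqsubseteq z_{k+1}$, the partial order $\leq$ restricts to a total order which coincides with $\sqsubseteq$ or with $\sqsupseteq$ (according to the parity of $k$ and to which of the two alternatives in condition (3) holds); either way, $x\leq y\leq z$ yields $x\sqsubseteq y\sqsubseteq z$ or $z\sqsubseteq y\sqsubseteq x$.

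I expect the only genuine obstacle to be the crucial step in part (3)---locating $y$ in the common $\sqsubseteq$-segment of $x$ and $z$---which is precisely where part (2) is used; parts (1) and (2), and the final transfer step in part (3), are routine manipulations with the total order $\sqsubseteq$ and conditions (1) and (3).
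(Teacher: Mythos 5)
Your proof is correct and follows essentially the same route as the paper's: part (1) from condition (2) with $x=y$; part (2) by sandwiching $z_i$ between $z_j$ and $z_{j+1}$ and invoking condition (1); and part (3) by using condition (2) to place $x$ and $z$ in a common segment $[z_k, z_{k+1}]$, then using part (2) (by contradiction) to force $y$ into the same segment, and finally applying condition (3).
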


\begin{proof}
   (1) Follows from condition (2) in Definition \ref{def:zigzag} when $x=y$. 

   (2) Suppose that $x\sqsubseteq z_i \sqsubseteq y$. 
   
   By condition (2) in Definition \ref{def:zigzag}, there exists $j\in \Gamma$ such that $j+1\in \Gamma$,
   \begin{gather*} 
      z_j \sqsubseteq x \sqsubseteq z_{j+1}, \\ 
      z_j \sqsubseteq y \sqsubseteq z_{j+1}. 
   \end{gather*}
   Hence we have  
   \[ z_j \sqsubseteq x \sqsubseteq z_i \sqsubseteq y \sqsubseteq z_{j+1}. \]
   By condition (1) in Definition \ref{def:zigzag}, we have $z_i = z_j$ or $z_i = z_{j+1}$. It follows that $z_i=x$ or $z_i=y$.

   Similarly if $y\sqsubseteq z_i \sqsubseteq x$.

   (3) By condition (2) in Definition \ref{def:zigzag}, there exists $i\in \Gamma$ such that $i+1\in \Gamma$,
   \begin{gather*}
            z_i \sqsubseteq x \sqsubseteq z_{i+1}, \\ 
            z_i \sqsubseteq z \sqsubseteq z_{i+1}. 
   \end{gather*}
   Then we also have $z_i \sqsubseteq y \sqsubseteq z_{i+1}$ because otherwise, by (2):
   \begin{align*}
      y \sqsubset z_i \quad &\Longrightarrow \quad x=z_i \mbox{ and } z=z_i\\
      &\Longrightarrow \quad y=z_i, \quad \mbox{ a contradiction; }
   \end{align*} 
   similarly,
   \begin{align*}
      z_{i+1} \sqsubset y \quad &\Longrightarrow \quad x=z_{i+1} \mbox{ and }z=z_{i+1} \\
      &\Longrightarrow \quad y=z_{i+1}, \quad \mbox{ a contradiction. }
   \end{align*}
   It is now easy to deduce the desired result from condition (3) in Definition \ref{def:zigzag}.
\end{proof}

\begin{lemma} \label{lem:interval in zigzag}
   Let $P$ be a zigzag poset with extrema $\{z_i \mid i\in \Gamma\}$. 
   Let $\sqsubseteq$ be the associated total order. Let $I$ be a nonempty subset of $P$. Then the following statements are equivalent:
   \begin{enumerate}
      \item $I$ is an interval in the poset $P$.
      \item For all $x,y,z\in P$ such that $x\sqsubseteq y\sqsubseteq z$, we have $y\in I$ whenever $x, z\in I$.
   \end{enumerate}
\end{lemma}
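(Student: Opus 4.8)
The plan is to prove the equivalence of the two characterizations of an interval in a zigzag poset by exploiting Lemma \ref{lem:zigzag shape}, which translates the partial order $\leq$ into statements about the associated total order $\sqsubseteq$. The key observation is that condition (2) — convexity with respect to $\sqsubseteq$ — is an ``absolute'' condition, while being an interval in $P$ is defined via $\leq$-convexity and $\leq$-connectedness; Lemma \ref{lem:zigzag shape}(3) is exactly the bridge, since it says that any $\leq$-chain $x\leq y\leq z$ is automatically a $\sqsubseteq$-monotone chain (in one direction or the other).

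\textbf{Proof that (2) implies (1).} Assume $I$ satisfies (2). Since $I$ is nonempty, it suffices to check that $I$ is convex and connected in $P$. For convexity, suppose $x,z\in I$ and $x\leq y\leq z$ in $P$. By Lemma \ref{lem:zigzag shape}(3), either $x\sqsubseteq y\sqsubseteq z$ or $z\sqsubseteq y\sqsubseteq x$; in either case condition (2) forces $y\in I$. For connectedness, I would argue that any two elements of a $\sqsubseteq$-convex set are joined by a zigzag path inside $I$: given $x,z\in I$ with, say, $x\sqsubseteq z$, use Lemma \ref{lem:zigzag shape}(1) to find an extremum between consecutive ``pieces'' and string together the path $x, z_{i_1}, z_{i_1+1}\text{ or }z_{i_1}, \dots, z$ running through the extrema $z_i$ with $x\sqsubseteq z_i\sqsubseteq z$; each such extremum lies in $I$ by condition (2), and each consecutive pair is comparable in $\leq$ because both lie in a single ``segment'' $[z_j,z_{j+1}]$ (using condition (2)/(3) of Definition \ref{def:zigzag}). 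Actually, a cleaner route: condition (2) of Definition \ref{def:zigzag} gives, for the comparable pair $x\leq y$ one wants, an index $i$ with $z_i\sqsubseteq x,y\sqsubseteq z_{i+1}$, so it is enough to connect $x$ to $z_i$ (or $z_{i+1}$) within each segment, where the segment is totally ordered by $\leq$ by Definition \ref{def:zigzag}(3) and hence trivially connected.

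\textbf{Proof that (1) implies (2).} Assume $I$ is an interval in $P$, and suppose $x\sqsubseteq y\sqsubseteq z$ with $x,z\in I$; we must show $y\in I$. Since $I$ is connected, choose a zigzag path $y_0=x, y_1,\dots,y_n=z$ in $I$ with each consecutive pair $\leq$-comparable. I would show that the image of such a path, together with convexity, must ``cover'' every point $\sqsubseteq$-between $x$ and $z$. The main tool is Lemma \ref{lem:zigzag shape}(2)--(3): for a single $\leq$-comparable pair $y_{j-1}, y_j$, Definition \ref{def:zigzag}(2) places both in a common segment $[z_i,z_{i+1}]$, on which $\sqsubseteq$ and $\leq$ (in one fixed orientation) agree, so the set of points $\sqsubseteq$-between $y_{j-1}$ and $y_j$ is $\leq$-between them as well, hence in $I$ by convexity. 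Then an induction along the path — keeping track of the ``$\sqsubseteq$-extent'' $[\min_\sqsubseteq, \max_\sqsubseteq]$ of the initial segment $y_0,\dots,y_j$ and noting it stays $\sqsubseteq$-convex and contained in $I$ — shows $I$ contains the whole $\sqsubseteq$-interval from $x$ to $z$, in particular $y$.

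\textbf{Main obstacle.} The genuinely delicate direction is (1) $\Rightarrow$ (2): one must show that a connected set, which a priori only links points by possibly wild zigzag paths, is forced to be $\sqsubseteq$-convex. The crux is the ``covering'' step — that traversing a single $\leq$-edge $y_{j-1},y_j$ of the path fills in all $\sqsubseteq$-intermediate points — and then the bookkeeping that the union of these filled-in segments, together with the endpoints $x,z$, is $\sqsubseteq$-convex. Lemma \ref{lem:zigzag shape} is designed precisely to make each local step routine; the work is in organizing the induction so that the accumulated $\sqsubseteq$-extent never ``skips over'' a point, which follows because consecutive path edges share the vertex $y_j$ and segments overlap at extrema.
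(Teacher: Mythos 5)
Your proof is correct and follows essentially the same approach as the paper: both directions rest on Lemma~\ref{lem:zigzag shape}, the segment structure of Definition~\ref{def:zigzag}, and the path through extrema $z_{i+1},\ldots,z_j$ for connectedness. The only organizational difference is in (1)$\Rightarrow$(2): the paper picks the smallest index $i$ with $y\sqsubseteq y_i$ and lands $y$ directly in the single segment between $y_{i-1}$ and $y_i$, whereas you fill in every segment along the path and track by induction that the accumulated $\sqsubseteq$-extent remains convex — tighter in the paper, but the same underlying idea.
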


\begin{proof}
   (1)$\Longrightarrow$(2): Let $x,y,z\in P$ such that $x\sqsubseteq y\sqsubseteq z$ and $x,z\in I$. 
   
   Since $I$ is connected, there exists $y_0, y_1, \ldots, y_m\in I$ such that $y_0=x$, $y_m=z$, and 
   \[ y_{i-1} \leq y_i \enspace \mbox{ or } \enspace y_{i-1} \geq y_i \quad \mbox{ for all } i\in \{1, \ldots, m\}. \]
   Choose the smallest $i \in \{1, \ldots, m\}$ such that $y\sqsubseteq y_i$. Then we have $y_{i-1} \sqsubseteq y \sqsubseteq y_i$. 

   By condition (2) in Definition \ref{def:zigzag}, there exists $j\in \Gamma$ such that $j+1\in \Gamma$ and 
   \begin{gather*} 
      z_j \sqsubseteq y_{i-1} \sqsubseteq z_{j+1}, \\ 
      z_j \sqsubseteq y_i \sqsubseteq z_{j+1}.
   \end{gather*} 
   Hence we have $z_j \sqsubseteq y_{i-1} \sqsubseteq y \sqsubseteq y_i \sqsubseteq z_{j+1}$. 
   
   By condition (3) in Definition \ref{def:zigzag}, it follows that 
   \[ y_{i-1} \leq y \leq y_i \quad \mbox{ or } \quad y_i  \leq y \leq y_{i-1}. \]
   Since $I$ is convex, it follows that $y\in I$. 

   (2)$\Longrightarrow$(1): To prove that $I$ is convex, let $x, y, z\in P$ such that $x, z\in I$ and $x\leq y\leq z$. 

   By Lemma \ref{lem:zigzag shape}(3), we have 
   \[ x\sqsubseteq y \sqsubseteq z \quad \mbox{ or } \quad z \sqsubseteq y \sqsubseteq x. \]
   Since $x, z\in I$, we have $y\in I$. 

   To prove that $I$ is connected, let $x,z \in I$. 
   
   By Lemma \ref{lem:zigzag shape}(1), there exist $i, j\in \Gamma$ such that $i+1, j+1\in \Gamma$ and 
   \begin{gather*}
      z_i \sqsubseteq x \sqsubseteq z_{i+1},\\
      z_j \sqsubseteq z \sqsubseteq z_{j+1}.
   \end{gather*}
   If $i=j$, then by condition (3) in Definition \ref{def:zigzag}, we have $x\leq z$ or $x\geq z$ (in which case we are done). 
   
   Suppose that $i\neq j$. Without loss of generality we may assume $i<j$. By condition (1) in Definition \ref{def:zigzag}, we have 
   \[ x \sqsubseteq z_{i+1} \sqsubseteq z_{i+2} \sqsubseteq \cdots \sqsubseteq z_j \sqsubseteq z. \]
   It follows that $z_{i+1}, z_{i+2}, \ldots, z_j \in I$. Let
   \[ \begin{cases}  
      y_0=x, \\ 
      y_{j-i+1}=z, \\
      y_r = z_{i+r} \quad \mbox{ for all } r \in \{1, \ldots, j-i\}.
   \end{cases} \] 
   Then by condition (3) in Definition \ref{def:zigzag}, we have 
   \[ y_{r-1} \leq y_r \enspace \mbox{ or } \enspace y_{r-1} \geq y_r \quad \mbox{ for all } r\in \{1, \ldots, j-i+1\}. \] 
\end{proof}

\begin{lemma} \label{lem:extension by zero}
   Let $P$ be a zigzag poset with extrema $\{z_i \mid i\in \Gamma\}$. 
   Let $\sqsubseteq$ be the associated total order.
   Let $V$ be a persistence module indexed by $P$.
   \begin{enumerate}
      \item Let $i\in \Gamma$. 
      Let $P'$ be the full subposet of $P$ defined 
      \begin{align*}
         \mbox{ either by } \quad &P'=\{x\in P \mid x\sqsubseteq z_i\}, \\
         \mbox{ or by } \quad &P'=\{x\in P \mid z_i \sqsubseteq x\}. 
      \end{align*}
      Let $V'$ be the restriction of $V$ to $P'$. 
      Assume there exists an internal direct sum decomposition $V'=U'\oplus W'$ such that $U'$ is an interval module and $U'_{z_i}=0$. Then there exists an internal direct sum decomposition $V=U\oplus W$ such that $U$ is an interval module and $U'$ is the restriction of $U$ to $P'$. 

      \item Let $i,j\in \Gamma$ ($i<j$) and 
      let $P''$ be the full subposet of $P$ defined by 
      \[ P''=\{x\in P \mid z_i \sqsubseteq  x \sqsubseteq z_j\}. \]
      Let $V''$ be the restriction of $V$ to $P''$. 
      Assume there exists an internal direct sum decomposition $V''=U''\oplus W''$ such that $U''$ is an interval module and $U''_{z_i}=U''_{z_j}=0$. Then there exists an internal direct sum decomposition $V=U\oplus W$ such that $U$ is an interval module and $U''$ is the restriction of $U$ to $P''$. 
   \end{enumerate}
\end{lemma}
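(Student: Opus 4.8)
The plan is to prove both parts in the same way, by an \emph{extension-by-zero} construction carried out at the level of idempotent endomorphisms; I describe part (1) with the subposet called $P'$, and at the end indicate the (cosmetic) changes needed for part (2).

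First I would set $e' = \mathrm{in}^{U'} \circ \mathrm{pr}^{U'} \in \mathrm{End}(V')$ (see Notation \ref{nota: in and pr}). This is an idempotent with $\im(e') = U'$, so in particular $\im(e'_{z_i}) = U'_{z_i} = 0$ and hence $e'_{z_i} = 0$. Define an endomorphism $e$ of $V$ by $e_x = e'_x$ for $x \in P'$ and $e_x = 0$ for $x \notin P'$, and then put $U = \im(e)$ and $W = \ker(e)$. Each $e_x$ is an idempotent $k$-linear map, so $V_x = U_x \oplus W_x$ for every $x$, hence $V = U \oplus W$; and by construction $U_x = U'_x$ for $x \in P'$ while $U_x = 0$ otherwise, so the restriction of $U$ to $P'$ is exactly $U'$. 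It then remains to check (a) that $e$ really is a morphism $V \to V$, and (b) that $U$ is an interval module.

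Part (a) is the crux of the argument, and the only place the hypothesis $U'_{z_i} = 0$ is used. One must verify $V_{x,y} \circ e_x = e_y \circ V_{x,y}$ for all $x \leq y$ in $P$. If $x, y \in P'$ this is naturality for $e'$; if $x, y \notin P'$ both sides vanish. In the remaining case exactly one of $x, y$ lies in $P'$, and here I use that $P'$ is $\sqsubseteq$-downward-closed (or $\sqsubseteq$-upward-closed): since $x \leq y$ and $z_i$ lies $\sqsubseteq$-between $x$ and $y$, Lemma \ref{lem:zigzag shape}(2) forces the one of $x, y$ that lies in $P'$ to equal $z_i$, at which $e$ vanishes — so both sides are again $0$. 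For part (b): $U'$ is an interval module indexed by $P'$, so $U' \cong M_{P'}(I')$ for some interval $I'$ of $P'$, and $z_i \notin I'$ because $U'_{z_i} = 0$. I claim $I'$ is an interval of $P$. It is connected in $P$ because a connecting chain lying in the full subposet $P'$ is a connecting chain in $P$; and it is convex in $P$ because $P'$ is convex in $P$ — indeed, if $x \leq y \leq z$ with $x, z \in P'$ then by Lemma \ref{lem:zigzag shape}(3) we have $x \sqsubseteq y \sqsubseteq z$ or $z \sqsubseteq y \sqsubseteq x$, hence $y \in P'$ by $\sqsubseteq$-convexity of $P'$ — and $I'$ is convex in $P'$. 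Finally the isomorphism $M_{P'}(I') \to U'$ extends by zero to an isomorphism $M(I') \to U$ (the new naturality squares are trivial since both modules vanish off $P'$), so $U$ is an interval module.

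For part (2) the same argument applies verbatim with $P'' = \{x \in P \mid z_i \sqsubseteq x \sqsubseteq z_j\}$ in place of $P'$: this $P''$ is $\sqsubseteq$-convex, so in a boundary jump Lemma \ref{lem:zigzag shape}(2) forces the endpoint in $P''$ to be $z_i$ or $z_j$, and the vanishing $U''_{z_i} = U''_{z_j} = 0$ supplies what is needed; Lemma \ref{lem:zigzag shape}(3) again gives convexity of $P''$ in $P$; and $z_i, z_j \notin I''$. The main obstacle throughout is the single boundary-jump subcase of (a); once that is handled, the rest is routine bookkeeping.
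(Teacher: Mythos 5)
Your proposal is correct and takes essentially the same approach as the paper: both proceed by extending the decomposition by zero (resp.\ by the ambient space) off $P'$, and both use Lemma \ref{lem:zigzag shape}(2) in the boundary case $x \le y$ with exactly one of $x,y$ in $P'$ to force that endpoint to be $z_i$, where the hypothesis $U'_{z_i}=0$ makes everything vanish. Your idempotent phrasing via $e = \mathrm{in}^{U'}\circ\mathrm{pr}^{U'}$ extended by zero is a cosmetic repackaging of the paper's direct definition of $U_x$ and $W_x$; you are a bit more explicit than the paper's ``it is easy to see'' in spelling out why $U$ is an interval module, which is a welcome addition rather than a divergence.
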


\begin{proof}
   (1) For all $x\in P$, let
   \[ U_x = \begin{cases}
      U'_x & \mbox{ if }x\in P',\\
      0 & \mbox{ if }x\notin P';
   \end{cases} \qquad  
   W_x = \begin{cases}
      W'_x & \mbox{ if }x\in P',\\
      V_x & \mbox{ if }x\notin P'.
   \end{cases} \]
   It is easy to see using Lemma \ref{lem:zigzag shape}(2) that this defines submodules $U, W$ of $V$. Clearly $V=U\oplus W$ and moreover $U$ is an interval module whose restriction to $P'$ is $U'$.

   (2) Similar to (1).
\end{proof}

\section{Zigzag posets: finite case} \label{sec: zigzag finite}

In this section, we prove Theorem C in the case when the zigzag poset has only finitely many extrema. Our proof uses ideas of Ringel in his proof of \cite{ri}*{Theorem 1.1}. 

\begin{theorem} \label{thm:finite zigzag}
      Let $P$ be a zigzag poset with extrema $\{ z_1, z_2, \ldots, z_n\}$, where $n\geq 2$. Let $V$ be a pointwise finite dimensional persistence module indexed by $P$. Then $V$ has an interval decomposition.
\end{theorem}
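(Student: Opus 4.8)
The plan is to induct on the number $n$ of extrema, using Lemma \ref{lem:interval decomposition criteria}: it suffices to produce, for every nonzero pointwise finite dimensional $V$ indexed by $P$, a direct summand of $V$ that is an interval module. Fix the associated total order $\sqsubseteq$ and the extrema $z_1 \sqsubset z_2 \sqsubset \cdots \sqsubset z_n$. The base case $n=2$ is essentially the totally ordered case: the interval $\{x \mid z_1 \sqsubseteq x \sqsubseteq z_2\}$ with the partial order (which by Definition \ref{def:zigzag}(3) runs monotonically along $\sqsubseteq$ in one direction) is totally ordered, so Lemma \ref{lem:gluing over totally ordered set} applies directly. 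Actually the same remark handles a slightly more useful base observation: the full subposet on $\{x \mid z_i \sqsubseteq x \sqsubseteq z_{i+1}\}$ is always totally ordered, so on each such ``edge'' we have Theorem B available.

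For the inductive step, I would isolate the outermost extremum, say $z_n$ (the $\sqsubseteq$-largest). Let $P' = \{x \in P \mid x \sqsubseteq z_{n-1}\}$ and let $P''=\{x \mid z_{n-1}\sqsubseteq x \sqsubseteq z_n\}$, so $P'$ is a zigzag poset with $n-1$ extrema and $P''$ is totally ordered, and $P = P' \cup P''$ with $P' \cap P'' = \{z_{n-1}\}$. Write $V', V''$ for the restrictions. The strategy is Ringel's ``one vertex at a time'' argument: if $V_{z_n}=0$ we can discard the edge $P''$ using Lemma \ref{lem:extension by zero}(1) and conclude by induction on $V'$; otherwise we want to split off an interval summand that reaches $z_n$. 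Here I would look at whether $z_{n-1}$ is a source or a sink of the edge $P''$ (i.e.\ whether the maps on $P''$ point toward or away from $z_n$), choose a basis $B$ of $V_{z_{n-1}}$ compatible with the relevant flag at $z_{n-1}$ coming from the $V''$ side (flag of images or flag of kernels, as in Lemmas \ref{lem:decomposition with maximal element} and \ref{lem:decomposition with minimal element}), and simultaneously compatible (via Lemma \ref{lem:basis for two flags}) with the finitely many subspaces of $V_{z_{n-1}}$ that record how the $V'$ side interacts. Using Lemma \ref{lem:decomposition with maximal element} or \ref{lem:decomposition with minimal element} on the totally ordered $V''$, pick a basis vector $v \in B$ and split $V'' = U'' \oplus W''$ with $U''$ an interval module over $P''$ having $\{v\}$ a basis at $z_{n-1}$; if $v$ can be chosen with $v \in \mathrm{im}$ (resp.\ not in $\ker$) of something from the $z_n$ direction, the interval $U''$ genuinely extends to $z_n$. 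One then glues $U''$ to a compatible interval or zero summand on the $V'$ side: either $U''_{z_{n-1}}=0$, in which case Lemma \ref{lem:extension by zero} finishes, or $U''_{z_{n-1}} \neq 0$ and we must find a matching interval summand of $V'$ agreeing at $z_{n-1}$, which exists by the inductive hypothesis applied carefully (decompose $V'$ so that one summand restricts to a line spanned by the image of $v$ at $z_{n-1}$) and then glue as in Lemma \ref{lem:gluing over totally ordered set}.

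The main obstacle is the gluing at the shared extremum $z_{n-1}$: the inductive decomposition of $V'$ and the chosen decomposition of $V''$ must be made to agree at $z_{n-1}$ not just dimensionally but as actual subspace decompositions of $V_{z_{n-1}}$, and the interval summand we extract must be chosen so that its behavior at $z_{n-1}$ (whether it is $0$, or a specified line, compatibly with both the image/kernel flags and with the $V'$-side data) is consistent on both sides. Controlling this forces the right choice of $v\in B$ --- one wants $v$ to lie in the smallest nonzero image subspace (if $z_{n-1}$ is a sink toward $z_n$) or outside the largest proper kernel (if it is a source), so that the resulting interval $U''$ is as ``long'' as possible toward $z_n$ and as ``short'' or predictable as possible toward $P'$; this is exactly the point where Ringel's argument is delicate, and where Lemma \ref{lem:basis for two flags} does the bookkeeping. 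Once the matched summand $U = U' \cup U''$ is built, it is an interval module by construction (its support is an interval by Lemma \ref{lem:interval in zigzag}), and Lemma \ref{lem:interval decomposition criteria} converts this single-summand statement into the full interval decomposition, completing the induction.
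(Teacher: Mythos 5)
Your scaffolding --- induction on $n$, reduction via Lemma \ref{lem:interval decomposition criteria} to producing a single interval summand, base case $n=2$ from Theorem \ref{thm:totally ordered case}, and the observation that each edge $\{z_i\sqsubseteq x\sqsubseteq z_{i+1}\}$ is totally ordered --- all matches the paper. The gap lies entirely in the inductive step: you remove only the single outer extremum $z_n$, obtaining $P'=\{x\sqsubseteq z_{n-1}\}$ (a zigzag with $n-1$ extrema) and the edge $P''=\{z_{n-1}\sqsubseteq x\sqsubseteq z_n\}$, and then try to glue at the one shared vertex $z_{n-1}$. The phrase ``decompose $V'$ so that one summand restricts to a line spanned by the image of $v$ at $z_{n-1}$, which exists by the inductive hypothesis applied carefully'' is exactly the unproved step. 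The inductive hypothesis asserts that \emph{some} interval decomposition of $V'$ exists, but gives no control over which lines its summands carve out of $V_{z_{n-1}}$; and because $P'$ is a zigzag, not a ray, there is no canonical flag on $V_{z_{n-1}}$ coming from the $P'$-side that Lemma \ref{lem:basis for two flags} could reconcile with the flag coming from $P''$. Steinberg's lemma coordinates two flags on the same space; it cannot coordinate a flag with an arbitrary line decomposition arising from an inductively-produced zigzag splitting.

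The paper's proof avoids this by removing \emph{both} outer extrema simultaneously, setting $P'=\{z_2\sqsubseteq x\}$ and $P''=\{x\sqsubseteq z_{n-1}\}$ (each a zigzag with $n-1$ extrema) with overlap $Q=\{z_2\sqsubseteq x\sqsubseteq z_{n-1}\}$. If some inductive summand of $V'$ vanishes at $z_2$, or one of $V''$ vanishes at $z_{n-1}$, Lemma \ref{lem:extension by zero} produces the desired summand of $V$. Otherwise, restricting both inductive decompositions to $Q$ and invoking the uniqueness theorem, Theorem \ref{thm:uniqueness}, forces every restricted summand to be isomorphic to $M(Q)$, hence $\overline V\cong M(Q)^{\oplus d}$. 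That isomorphism is what transports the image/kernel flags at $z_2$ and at $z_{n-1}$ --- coming from the two totally ordered \emph{end} segments $R'=\{x\sqsubseteq z_2\}$ and $R''=\{z_{n-1}\sqsubseteq x\}$ --- onto the same space $k^{\oplus d}$, so that Lemma \ref{lem:basis for two flags} genuinely applies to two flags and the gluing you envision can be carried out. The two ingredients your sketch omits, and which make the argument go through, are the symmetric two-sided removal of extrema and the appeal to Theorem A to trivialize the overlap region.
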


\begin{proof}
   We use induction on $n$. The base case $n=2$ is immediate from Theorem \ref{thm:totally ordered case}.
   
   Assume $n\geq 3$. 

   \begin{claim} \label{claim: finite case inductive step} 
      Let $V$ be a nonzero pointwise finite dimensional persistence module indexed by $P$. Then $V$ has an internal direct sum decomposition $V=U\oplus W$ such that $U$ is an interval module. 
   \end{claim}
   
   \begin{proof}[Proof of Claim \ref{claim: finite case inductive step}]
   Let $\sqsubseteq$ be the associated total order on $P$. 
   
   Define full subposets $P'$, $P''$, $Q$ of $P$ by 
   \begin{align*}
      P' &= \{ x\in P \mid z_2 \sqsubseteq x\}, \\
      P'' &= \{ x\in P \mid x \sqsubseteq z_{n-1} \},\\
      Q &= \{ x\in P \mid z_2\sqsubseteq x \sqsubseteq z_{n-1}\}.
   \end{align*}
   Let $V', V'', \overline{V}$ be the restrictions of $V$ to $P', P'', Q$, respectively. By induction hypothesis, there exist interval decompositions 
   \begin{equation} \label{eq:V' and V''}
   V' = \bigoplus_{U'\in \mathcal{A}} U', \qquad V'' = \bigoplus_{U''\in \mathcal{B}} U''. 
   \end{equation}

   There are 3 cases to consider. 

   \textbf{Case 1}: There exists $U'\in \mathcal{A}$ such that $U'_{z_2}=0$.

   In this case the claim follows from Lemma \ref{lem:extension by zero}(1).

   \textbf{Case 2}: There exists $U''\in \mathcal{B}$ such that $U''_{z_{n-1}}=0$. 

   In this case the claim follows again from Lemma \ref{lem:extension by zero}(1).

   \textbf{Case 3}: For all $U'\in \mathcal{A}$ and $U''\in \mathcal{B}$, we have $U'_{z_2}\neq 0$ and $U''_{z_{n-1}}\neq 0$. 

   For all $U\in \mathcal{A}\cup \mathcal{B}$, let $\overline{U}$ be the restriction of $U$ to $Q$. Then by \eqref{eq:V' and V''}, we have interval decompositions
   \[ \overline{V} =   \bigoplus_{U'\in \mathcal{A}} \overline{U'}, \qquad \overline{V} = \bigoplus_{U''\in \mathcal{B}} \overline{U''}.\]
   By Theorem \ref{thm:uniqueness}, there exists a bijection $\sigma: \mathcal{A} \to \mathcal{B}$ such that 
   \[ \sigma(U') = U'' \quad \Longrightarrow \quad \overline{U'} \cong \overline{U''}. \]

   Now let $U'\in \mathcal{A}$ and  $U'' =\sigma(U')$. Then 
   \[ \overline{U'}_{z_{n-1}} \cong \overline{U''}_{z_{n-1}} \neq 0. \]  
   Thus $\overline{U'}$ is an interval module indexed by $Q$ such that $\overline{U'}_{z_2}\neq 0$ and $\overline{U'}_{z_{n-1}}\neq 0$; by Lemma \ref{lem:interval in zigzag}, we therefore have $\overline{U'} \cong M(Q)$. Since $U'$ is an arbitrary member of $\mathcal{A}$, it follows that there exists an isomorphism
   \[ f: \overline{V} \xlongrightarrow{\sim} M(Q)^{\oplus d} \quad \mbox{ where } \quad d=|\mathcal{A}|. \]
   By Definition \ref{def:constant module}, for each $x\in Q$ we have $M(Q)_x=k$. Hence for each $x\in Q$,
   \[ f_x : V_x \xlongrightarrow{\sim} k^{\oplus d}. \]
   
   Define full subposets $R', R''$ of $P$ by 
   \[
      R' = \{ x\in P \mid x \sqsubseteq z_2\}, \qquad
      R'' = \{ x\in P \mid z_{n-1} \sqsubseteq x \}.
   \]
   Then $R', R''$ are totally ordered sets. Let $Z', Z''$ be the restriction of $V$ to $R', R''$, respectively. 

   Define a flag $\mathcal{F}$ in $V_{z_2}$ by 
   \[ \mathcal{F} = \begin{cases}
         \mbox{ the flag of images of $Z'$ at $z_2$ \enspace if \enspace $z_2$ is a maximal element in $R'$, }\\
         \mbox{ the flag of kernels of $Z'$ at $z_2$ \enspace if \enspace $z_2$ is a minimal element in $R'$. }\\
   \end{cases}\]
   Define a flag $\mathcal{G}$ in $V_{z_{n-1}}$ by 
   \[ \mathcal{G} = \begin{cases}
         \mbox{ the flag of images of $Z''$ at $z_{n-1}$ \enspace if \enspace $z_{n-1}$ is a maximal element in $R''$, }\\
         \mbox{ the flag of kernels of $Z''$ at $z_{n-1}$ \enspace if \enspace $z_{n-1}$ is a minimal element in $R''$. }\\
   \end{cases}\]
   Now define flags $f_{z_2}(\mathcal{F}), f_{z_{n-1}}(\mathcal{G})$ in $k^{\oplus d}$ by 
   \begin{align*}
            f_{z_2}(\mathcal{F}) &= \{ f_{z_2}(F) \mid F \in \mathcal{F} \},\\ 
            f_{z_{n-1}}(\mathcal{G}) &= \{ f_{z_{n-1}}(G) \mid G \in \mathcal{G} \}.
   \end{align*}

   By Lemma \ref{lem:basis for two flags}, there exists a basis $B$ of $k^{\oplus d}$ compatible with both  $f_{z_2}(\mathcal{F})$ and $f_{z_{n-1}}(\mathcal{G})$. 
   Choose such a basis $B$. 
   Then $f_{z_2}^{-1}(B)$ is a basis of $V_{z_2}$ compatible with $\mathcal{F}$, and $f_{z_{n-1}}^{-1}(B)$ is a basis of $V_{z_{n-1}}$ compatible with $\mathcal{G}$.

   Fix an element $v\in B$. 
   
   By Lemma \ref{lem:decomposition with maximal element}/\ref{lem:decomposition with minimal element}, there exists an internal direct sum decomposition 
   \[ Z'=K'\oplus W' \] 
   such that $K'$ is an interval module, $\{f_{z_2}^{-1}(v)\}$ is a basis of $K'_{z_2}$, and $f_{z_2}^{-1}(B-\{v\})$ is a basis of $W'_{z_2}$.

   By Lemma \ref{lem:decomposition with maximal element}/\ref{lem:decomposition with minimal element}, there exists an internal direct sum decomposition  
   \[ Z''=K''\oplus W'' \] 
   such that $K''$ is an interval module, $\{f_{z_{n-1}}^{-1}(v)\}$ is a basis of $K''_{z_{n-1}}$, and $f_{z_{n-1}}^{-1}(B-\{v\})$ is a basis of $W''_{z_{n-1}}$.

   Let $C, D$ be the linear subspaces of $k^{\oplus d}$ spanned by $\{v\}, B-\{v\}$, respectively. Define submodules $C(Q), D(Q)$ of $M(Q)^{\oplus d}$ by 
   \[ C(Q)_x = C, \qquad D(Q)_x = D \qquad \mbox{ for all }x\in Q. \]
   Then $C(Q)$ is an interval module and $M(Q)^{\oplus d} = C(Q) \oplus D(Q)$. 
   
   Let $\overline{C} = f^{-1}(C(Q))$ and $\overline{D} = f^{-1}(D(Q))$. Then $\overline{C}$ is an interval module and 
   \[ \overline{V} = \overline{C} \oplus \overline{D}. \]

   Observe that 
   \begin{align*}
      K'_{z_2} &= f_{z_2}^{-1}(C) = \overline{C}_{z_2}, & W'_{z_2} &= f_{z_2}^{-1}(D) = \overline{D}_{z_2}, \\
      K''_{z_{n-1}} &= f_{z_{n-1}}^{-1}(C) = \overline{C}_{z_{n-1}}, & W''_{z_{n-1}} &= f_{z_{n-1}}^{-1}(D) = \overline{D}_{z_{n-1}}.
   \end{align*}
   Thus there exist submodules $U, W$ of $V$ such that for all $x\in P$:
   \[ U_x = \begin{cases}
      K'_x & \mbox{ if } x\sqsubseteq z_2, \\
      \overline{C}_x & \mbox{ if } z_2 \sqsubset x \sqsubset z_{n-1}, \\
      K''_x & \mbox{ if } z_{n-1} \sqsubseteq x;
   \end{cases} \qquad
      W_x = \begin{cases}
      W'_x & \mbox{ if } x\sqsubseteq z_2, \\
      \overline{D}_x & \mbox{ if } z_2 \sqsubset x \sqsubset z_{n-1}, \\
      W''_x & \mbox{ if } z_{n-1} \sqsubseteq x.
      \end{cases} \]
   Clearly $V=U\oplus W$ and moreover $U$ is an interval module. 

   Claim \ref{claim: finite case inductive step} is proved.
   \end{proof}

   Claim \ref{claim: finite case inductive step} allows us to apply Lemma \ref{lem:interval decomposition criteria} to deduce that any pointwise finite dimensional module indexed by $P$ has an interval decomposition.
\end{proof}

\section{Monotonicity} \label{sec: monotonicity}

\begin{notation} \label{nota:zigzag truncate at n}
   Let $P$ be a zigzag poset with extrema $\{z_i \mid i\in \mathbb{N} \}$. 
   Let $\sqsubseteq$ be the associated total order. 
   Let $V$ be a persistence module indexed by $P$. 
   \begin{enumerate}
      \item    For each $n\in \mathbb{N}$, we write $P(n)$ for the full subposet of $P$ defined by 
      \[ P(n) = \{ x\in P \mid z_0 \sqsubseteq x \sqsubseteq z_n \}. \]
      \item  For each $n\in \mathbb{N}$, we write $V(n)$ for the restriction of $V$ to $P(n)$.
   \end{enumerate}
\end{notation}

\begin{definition} \label{def:monotonic}
   Let $P$ be a zigzag poset with extrema $\{z_i \mid i\in \mathbb{N} \}$. 
   Let $\sqsubseteq$ be the associated total order.
   Let $V$ be a pointwise finite dimensional persistence module indexed by $P$. 
   Let $m\in \mathbb{N}$. We say that $V$ is \emph{monotonic after $z_m$} if:
   \begin{quote}
      for every integer $n\geq m$, 

      for every interval decomposition $V(n) = \bigoplus_{U\in \mathcal{A}} U$,

      for every $U\in \mathcal{A}$ such that $U_{z_n}=0$,

      for every $x\in P$ such that $z_m \sqsubseteq x \sqsubseteq z_n$,
   \end{quote}
   we have $U_x = 0$.
\end{definition}

\begin{remark}
   Let $P$ be a zigzag poset with extrema $\{z_i \mid i\in \mathbb{N} \}$. 
   Let $V$ be a pointwise finite dimensional persistence module indexed by $P$. 
   Let $m, \ell\in \mathbb{N}$ such that $m<\ell$. If $V$ is monotonic after $z_m$, then $V$ is also monotonic after $z_\ell$.
\end{remark}

The proof of the following lemma is adapted from Botnan's proof of \cite{bot}*{Lemma 2.5}.

\begin{lemma} \label{lem:monotonic m exists}
   Let $P$ be a zigzag poset with extrema $\{z_i \mid i\in \mathbb{N} \}$. Let $V$ be a pointwise finite dimensional persistence module indexed by $P$. Assume that 
   \begin{quote}
     for every $n\in \mathbb{N}$, 
     
     for every interval decomposition $V(n) = \bigoplus_{U\in \mathcal{A}} U$,

      for every $U\in \mathcal{A}$,
   \end{quote}
  we have $U_{z_0} \neq 0$ or $U_{z_n} \neq 0$. Then there exists $m\in \mathbb{N}$ such that $V$ is monotonic after $z_m$.
\end{lemma}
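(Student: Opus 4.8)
The plan is to reduce everything to the (decomposition-independent) multisets of intervals appearing in interval decompositions of the truncations $V(n)$. For $n\geq 1$ the full subposet $P(n)$ is readily seen to be a zigzag poset with extrema $z_0,\dots,z_n$ and with the restriction of $\sqsubseteq$ as associated total order, so by Theorem \ref{thm:finite zigzag} the module $V(n)$ has an interval decomposition, and by Theorem \ref{thm:uniqueness} the multiset $\mathcal{I}(n)$ of intervals occurring in such a decomposition is independent of the decomposition. Call $I\in\mathcal{I}(n)$ a \emph{left interval} if $z_0\in I$, \emph{full} if moreover $z_n\in I$ (equivalently $I=P(n)$, since intervals are $\sqsubseteq$-convex by Lemma \ref{lem:interval in zigzag}), and \emph{proper} otherwise; write $s(n)$ for the number of proper left intervals of $\mathcal{I}(n)$, counted with multiplicity. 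Since $V(n)_{z_0}=V_{z_0}$ and $\dim M(I)_{z_0}$ is $1$ or $0$ according as $z_0\in I$ or not, the number of left intervals of $\mathcal{I}(n)$, with multiplicity, equals $\delta:=\dim V_{z_0}$; in particular $0\le s(n)\le\delta$. The hypothesis says precisely that every $I\in\mathcal{I}(n)$ contains $z_0$ or $z_n$, so an interval summand $U$ of $V(n)$ with $U_{z_n}=0$ is isomorphic to $M(I)$ for a proper left interval $I$ of $\mathcal{I}(n)$. Hence it suffices to find $m$ with the property that for every $n\ge m$, every proper left interval $I$ of $\mathcal{I}(n)$ satisfies $I\subseteq\{x\in P\mid z_0\sqsubseteq x\sqsubset z_m\}$.

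The key step is a \emph{restriction principle}: for $1\le m\le n$, restricting an interval decomposition of $V(n)$ to $P(m)$ yields an interval decomposition of $V(m)$, since the restriction of $M(I)$ to $P(m)$ is $M(I\cap P(m))$, and $I\cap P(m)$, being $\sqsubseteq$-convex, is either empty or an interval in $P(m)$ by Lemma \ref{lem:interval in zigzag}. By Theorem \ref{thm:uniqueness}, the assignment $I\mapsto I\cap P(m)$ then induces a bijection of multisets from the left intervals of $\mathcal{I}(n)$ onto the left intervals of $\mathcal{I}(m)$ (both multisets have size $\delta$, and $I\cap P(m)$ still contains $z_0$). A full left interval $P(n)$ restricts to $P(n)\cap P(m)=P(m)$, which is again full; therefore $\mathcal{I}(m)$ has at least as many full left intervals as $\mathcal{I}(n)$, i.e.\ $s(m)\le s(n)$. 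Thus $n\mapsto s(n)$ is nondecreasing and bounded, hence eventually constant: there is $m_0\ge1$ with $s(n)=s(m_0)$ for all $n\ge m_0$.

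I claim $V$ is monotonic after $z_{m_0}$. Fix $n\ge m_0$ and consider the restriction bijection between the left intervals of $\mathcal{I}(n)$ and those of $\mathcal{I}(m_0)$. It carries full left intervals to full left intervals, and the two sides have the same number $\delta-s(m_0)=\delta-s(n)$ of these, so it restricts to a bijection between the full left intervals on the two sides, and consequently to a bijection between the proper left intervals on the two sides. Hence for any proper left interval $I$ of $\mathcal{I}(n)$, the set $I\cap P(m_0)$ is a proper left interval of $\mathcal{I}(m_0)$, so $z_{m_0}\notin I\cap P(m_0)$. Since $z_0\in I$ and $I$ is $\sqsubseteq$-convex, $z_{m_0}\in I$ would force $z_{m_0}\in I\cap P(m_0)$; so $z_{m_0}\notin I$, and likewise $I\subseteq P(m_0)$ (if some $y\in I$ had $z_{m_0}\sqsubset y$, then $z_0\sqsubseteq z_{m_0}\sqsubseteq y$ with $z_0,y\in I$ would put $z_{m_0}\in I$). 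Therefore $I\subseteq\{x\in P\mid z_0\sqsubseteq x\sqsubset z_{m_0}\}$, so in any interval decomposition of $V(n)$ a summand $U\cong M(I)$ with $U_{z_n}=0$ has $U_x=0$ for every $x$ with $z_{m_0}\sqsubseteq x\sqsubseteq z_n$. This is exactly monotonicity after $z_{m_0}$. I expect the main obstacle to be the restriction principle together with the attendant bookkeeping with multisets of left intervals (full versus proper) through Theorem \ref{thm:uniqueness}; once $s$ is shown to stabilize, the conclusion is immediate.
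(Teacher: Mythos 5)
Your proof is correct and takes essentially the same route as the paper's (both are adaptations of Botnan's argument): each counts the summands of $V(n)$ vanishing at $z_n$, bounds the count by $\dim V_{z_0}$, and uses stabilization of the count under restriction to $P(m)$ to force vanishing on all of $\{x \mid z_m \sqsubseteq x \sqsubseteq z_n\}$. Your phrasing via a nondecreasing bounded sequence $s(n)$ of proper left intervals is a mild repackaging of the paper's maximization of $|\mathcal{O}(\mathcal{B})|$ followed by a sandwich inequality.
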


\begin{proof}
   Let $\sqsubseteq$ be the associated total order.
   
   We know by Theorem \ref{thm:finite zigzag} that for any $n\in \mathbb{N}$, the persistence module $V(n)$ indexed by $P(n)$ has an interval decomposition. For any such interval decomposition, say $V(n)=\bigoplus_{U\in \mathcal{A}} U$, define
   \[ \mathcal{O}(\mathcal{A}) = \{U\in \mathcal{A} \mid U_{z_n}=0 \}. \]
   Observe that 
   \[ U \in \mathcal{O}(\mathcal{A}) \quad \Longrightarrow \quad U_{z_0} \neq 0; \]
   now, since $V_{z_0} = V(n)_{z_0} = \bigoplus_{U\in \mathcal{A}} U_{z_0}$, it follows that
   \[ |\mathcal{O}(\mathcal{A})| \leq \dim(V_{z_0}). \]
   We can therefore choose $m\in \mathbb{N}$ and an interval decomposition $V(m)=\bigoplus_{U\in \mathcal{B}} U$ such that $|\mathcal{O}(\mathcal{B})|$ is maximal. We shall show that $V$ is monotonic after $z_m$. 

   Let $n\geq m$ be any integer. Let 
   \[ V(n) = \bigoplus_{U\in \mathcal{A}} U \] 
   be any interval decomposition. 

   For each $U\in \mathcal{A}$, write $\overline{U}$ for the restriction of $U$ to $P(m)$. Let 
      \[ \overline{\mathcal{A}} = \{ \overline{U}  \mid U \in \mathcal{A}  \mbox{ and } \overline{U} \neq 0\}. \]
      Then $V(m) = \bigoplus_{T \in \mathcal{\overline{A}}} T$. 
      This is an interval decomposition of $V(m)$, hence by Theorem \ref{thm:uniqueness}, there exists a bijection $\sigma: \overline{\mathcal{A}} \to \mathcal{B}$ such that 
      \[ T \cong \sigma(T) \quad \mbox{ for all } \quad T\in \overline{\mathcal{A}}. \] 
      Thus 
      \begin{equation} \label{eq:OA OB}
         |\mathcal{O}(\overline{\mathcal{A}})| = |\mathcal{O}(\mathcal{B})|.
      \end{equation}

      Now let 
      \[ Q=\{x\in P \mid z_m \sqsubseteq x \sqsubseteq z_n\}. \]
      Observe that if $U\in \mathcal{A}$ and $\overline{U}\in \mathcal{O}(\overline{\mathcal{A}})$, then $\overline{U}\neq 0$ and $U_{z_m}=0$; since $U$ is an interval module, it follows from Lemma \ref{lem:interval in zigzag} that $U_x=0$ for all $x\in Q$. Therefore we have a bijection
      \[ \left\{ U \in \mathcal{O}(\mathcal{A}) \mid U_x = 0 \mbox{ for all }x\in Q \right\} \,\to\, \mathcal{O}(\overline{\mathcal{A}}), \qquad U \mapsto \overline{U}. \]
      Hence 
      \begin{align*}
         |\mathcal{O}(\overline{\mathcal{A}})| &= | \{ U \in \mathcal{O}(\mathcal{A}) \mid U_x = 0 \mbox{ for all }x\in Q \} | & \\
         &\leq |\mathcal{O}(\mathcal{A})| & \\
         &\leq |\mathcal{O}(\mathcal{B})| & \\
         &= |\mathcal{O}(\overline{\mathcal{A}})| & \mbox{ by \eqref{eq:OA OB}. }
      \end{align*}
      We deduce that $\{ U \in \mathcal{O}(\mathcal{A}) \mid U_x = 0 \mbox{ for all }x\in Q \} = \mathcal{O}(\mathcal{A})$. 
      In other words, if $U\in \mathcal{A}$ and $U_{z_n}=0$, then $U_x=0$ for all $x\in Q$. This proves the lemma.
\end{proof}

\section{Zigzag posets: infinite case} \label{sec: zigzag infinite}

\begin{lemma} \label{lem:infinite zigzag inductive step}
   Let $P$ be a zigzag poset with extrema $\{z_i \mid i\in \mathbb{N} \}$. Let $V$ be a pointwise finite dimensional persistence module indexed by $P$. Let $m\in \mathbb{N}$ and assume that $V$ is monotonic after $z_m$. Then 
  \begin{quote} 
      for every integer $n \geq m$, 

      for every internal direct sum decomposition $V(n)=Z \oplus W$ where $Z$ is an interval module and $Z_{z_n}\neq 0$,
  \end{quote}
  there exists an internal direct sum decomposition $V(n+1) = \accentset{\sim}{Z} \oplus \accentset{\sim}{W}$ such that: 
  \begin{enumerate}
      \item $\accentset{\sim}{Z}$ is an interval module and $\accentset{\sim}{{Z}}_{z_{n+1}} \neq 0$; 
      \item $Z, W$ are, respectively, the restrictions of $\accentset{\sim}{Z}, \accentset{\sim}{W}$ to $P(n)$.
  \end{enumerate}
\end{lemma}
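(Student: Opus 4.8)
The plan is to localize the problem to the segment $Q=\{x\in P\mid z_n\sqsubseteq x\sqsubseteq z_{n+1}\}$, which by Definition~\ref{def:zigzag}(3) is a totally ordered subposet of $P$ whose two endpoints $z_n$ and $z_{n+1}$ are, in some order, its minimal and maximal elements. Since $P(n+1)=P(n)\cup Q$, $P(n)\cap Q=\{z_n\}$, and (Lemma~\ref{lem:zigzag shape}(2)) no element of $P(n)\setminus\{z_n\}$ is comparable to any element of $Q\setminus\{z_n\}$, the same kind of gluing used in Lemma~\ref{lem:extension by zero} and Lemma~\ref{lem:gluing over totally ordered set} reduces everything to finding an internal direct sum decomposition $V|_Q=U\oplus W_0$ of the restriction of $V$ to $Q$ such that $U$ is an interval module with $U_{z_n}=Z_{z_n}$ and $U_{z_{n+1}}\neq 0$, and $(W_0)_{z_n}=W_{z_n}$. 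Given such a decomposition, one lets $\accentset{\sim}{Z}$ agree with $Z$ on $P(n)$ and with $U$ on $Q$, and $\accentset{\sim}{W}$ agree with $W$ on $P(n)$ and with $W_0$ on $Q$; then $\accentset{\sim}{Z}$ is an interval module by Lemma~\ref{lem:interval in zigzag} (its support is $\sqsubseteq$-convex) and $\accentset{\sim}{Z}_{z_{n+1}}=U_{z_{n+1}}\neq 0$, while $(2)$ holds by construction.

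The key step — and the one I expect to be the main obstacle — is to extract from the monotonicity hypothesis that if $z_n$ is the maximal element of $Q$ then $V_{z_{n+1},z_n}$ is surjective, and if $z_n$ is the minimal element of $Q$ then $V_{z_n,z_{n+1}}$ is injective. For the first statement: by Theorem~\ref{thm:finite zigzag} choose an interval decomposition $V(n+1)=\bigoplus_{T\in\mathcal{A}}T$; since $z_m\sqsubseteq z_n\sqsubseteq z_{n+1}$ and $V$ is monotonic after $z_m$, every summand $T$ with $T_{z_n}\neq 0$ also has $T_{z_{n+1}}\neq 0$, and for such $T\cong M(J)$ we get $z_n,z_{n+1}\in J$, so $T_{z_{n+1},z_n}$ is an isomorphism by Definition~\ref{def:constant module}; summing over these summands yields $\im(V_{z_{n+1},z_n})\supseteq\bigoplus_{T\colon T_{z_n}\neq 0}T_{z_n}=V_{z_n}$. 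The injectivity statement (when $z_n$ is minimal) is proved the same way, using that $T_{z_n,z_{n+1}}$ is then an isomorphism.

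Next I would fix a generator $v$ of the line $Z_{z_n}$ and a basis $B$ of $V_{z_n}$ with $v\in B$ and $B\setminus\{v\}$ a basis of $W_{z_n}$ (possible since $V_{z_n}=Z_{z_n}\oplus W_{z_n}$). Suppose $z_n$ is the maximal element of $Q$. Surjectivity of $V_{z_{n+1},z_n}$ forces, via functoriality, $V_{x,z_n}$ to be surjective for every $x\in Q$, so the flag of images of $V|_Q$ at $z_n$ is $\{V_{z_n}\}$, with which $B$ is trivially compatible. Lemma~\ref{lem:decomposition with maximal element}, applied to $V|_Q$ with this $B$ and $v$, then gives $V|_Q=U\oplus W_0$ with $U$ an interval module, $U_{z_n}=\langle v\rangle=Z_{z_n}$, and $(W_0)_{z_n}=W_{z_n}$; moreover $U$ is a direct summand, so $U_{z_{n+1},z_n}$ is (like $V_{z_{n+1},z_n}$) surjective, whence $U_{z_{n+1}}\neq 0$. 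If instead $z_n$ is the minimal element of $Q$, then $V_{z_n,x}$ is injective for all $x\in Q$, the flag of kernels of $V|_Q$ at $z_n$ is $\{0\}$, and Lemma~\ref{lem:decomposition with minimal element} gives the analogous decomposition, with $U_{z_{n+1}}\neq 0$ because $U_{z_n,z_{n+1}}$ is a direct summand of the injective map $V_{z_n,z_{n+1}}$. In either case the gluing of the first paragraph finishes the proof.

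The substantive content is thus concentrated in the second paragraph: once the structure map across $Q$ is known to be surjective (resp. injective), the relevant flag at $z_n$ collapses to the trivial flag, the basis $B$ chosen to realize the splitting $V_{z_n}=Z_{z_n}\oplus W_{z_n}$ becomes automatically admissible for Lemma~\ref{lem:decomposition with maximal element}/Lemma~\ref{lem:decomposition with minimal element}, and the remaining verifications — that $\accentset{\sim}{Z},\accentset{\sim}{W}$ are submodules, that $\accentset{\sim}{Z}$ is an interval module, and that the decomposition restricts correctly — are routine and of the kind already carried out in Sections~\ref{sec: zigzag generalities} and~\ref{sec: totally ordered}.
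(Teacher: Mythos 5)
Your proof is correct, and it does the same essential work as the paper's proof, but by a visibly different route in the middle step. The shared engine is the same consequence of monotonicity: in any interval decomposition of $V(n+1)$ restricted to $P'=\{x\mid z_n\sqsubseteq x\sqsubseteq z_{n+1}\}$, every summand is nonzero at $z_{n+1}$, so a summand nonzero at $z_n$ is a copy of $M(P')$. From here, however, the paper does \emph{not} go through Lemmas~\ref{lem:decomposition with maximal element}/\ref{lem:decomposition with minimal element} at all: it fixes an isomorphism $f$ from $V'=V|_{P'}$ onto $\bigoplus_T M(I^T)$, notes that the part seen at $z_n$ is literally $M(P')^{\oplus d}$ (all structure maps the identity), and therefore any decomposition $C\oplus D$ of the fibre at $z_n$ extends automatically to constant submodules $C(P'),D(P')$; pulling back along $f$ and gluing to $Z,W$ finishes. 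No case split on whether $z_n$ is the top or the bottom of the segment is needed. You instead extract from the same structural fact the surjectivity (resp.\ injectivity) of the cross map $V_{z_{n+1},z_n}$ (resp.\ $V_{z_n,z_{n+1}}$), observe this collapses the flag of images (resp.\ kernels) of $V|_{P'}$ at $z_n$ to a trivial flag, and then invoke the totally-ordered-segment machinery with the basis adapted to the given splitting $Z_{z_n}\oplus W_{z_n}$. Both arguments are sound and of comparable length; the paper's is more self-contained within this section and avoids the two-case bookkeeping, while yours reuses Section~\ref{sec: totally ordered} directly and makes explicit the statement---surjectivity/injectivity of the structure map across the new segment---that monotonicity is really buying you, which some readers may find more illuminating. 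One minor point you pass over quickly but which does hold: when you apply Lemma~\ref{lem:decomposition with maximal element}, the resulting $U$ is $\im(f)$ with $U_x\ne 0$ exactly when $v\in\im(V_{x,z_n})$, so surjectivity of $V_{z_{n+1},z_n}$ gives $U_{z_{n+1}}\ne 0$ directly; your ``direct-summand-of-a-surjection'' phrasing also works but is unnecessary.
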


\begin{proof}
   Let $\sqsubseteq$ be the associated total order.
   
   Let $n\geq m$ be any integer, let 
   \begin{equation} \label{eq:V equal U W}
      V(n)=Z\oplus W 
   \end{equation}
   be any internal direct sum decomposition where $Z$ is an interval module and $Z_{z_n}\neq 0$.
   
   By Theorem \ref{thm:finite zigzag}, there is an interval decomposition
   \[ V(n+1) = \bigoplus_{U \in \mathcal{A}} U. \]
   Let $P'$ be the full subposet of $P$ defined by 
   \[ P' = \{ x\in P \mid z_n \sqsubseteq x \sqsubseteq z_{n+1} \}. \]
   Let $V'$ be the restriction of $V(n+1)$ to $P'$. For each $U\in \mathcal{A}$, let $U'$ be the restriction of $U$ to $P'$. Let 
   \[ \mathcal{A}' = \{ U' \mid  U\in \mathcal{A} \mbox{ and } U'\neq 0 \}. \]
   Then we have an interval decomposition 
   \begin{equation} \label{eq: VV T}
      V' = \bigoplus_{T\in \mathcal{A}' }T. 
   \end{equation} 
   
   Since $V$ is monotonic after $z_m$, if $U\in \mathcal{A}$ and $U_{z_{n+1}}=0$, then $U'=0$. Therefore if $U\in \mathcal{A}$ and $U'\neq 0$, then $U_{z_{n+1}}\neq 0$. In other words,
   \begin{equation} \label{eq:T nonzero at right}
      T\in \mathcal{A}' \quad \Longrightarrow \quad T_{z_{n+1}}\neq 0.
   \end{equation} 
   We know that for each $T\in \mathcal{A}'$, there is an isomorphism $T \cong M(I^T)$ for some interval $I^T$ in $P'$. By \eqref{eq:T nonzero at right}, 
   \[  T\in \mathcal{A}' \quad \Longrightarrow \quad z_{n+1} \in I^T. \]

   Let
   \[ f: \bigoplus_{T\in \mathcal{A}'} T \xlongrightarrow{\sim} \bigoplus_{T\in \mathcal{A}'} M(I^T) \]   
   be an isomorphism such that $f(T) = M(I^T)$ for all $T\in \mathcal{A}'$. 
   Let 
   \[ \mathcal{A}'' = \{ T \in \mathcal{A}' \mid z_n \in I^T \}. \]
   Observe that 
   \begin{align*} 
      T\in \mathcal{A}'' \quad & \Longrightarrow \quad z_n \in I^T \; \mbox{ and } \; z_{n+1} \in I^T \\ 
      & \Longrightarrow \quad I^T = P'.
   \end{align*}
   On the other hand,   
   \begin{align*}
      T\in \mathcal{A}' - \mathcal{A}'' \quad & \Longrightarrow \quad z_n \notin I^T \\ 
      & \Longrightarrow \quad T_{z_n} = 0 \; \mbox{ and } \; M(I^T)_{z_n} = 0. 
   \end{align*} 
   Therefore 
   \[ f_{z_n} : \bigoplus_{T\in \mathcal{A}'} T_{z_n}  \xlongrightarrow{\sim} \bigoplus_{T\in \mathcal{A}''} M(P')_{z_n}. \]
   By \eqref{eq:V equal U W} and \eqref{eq: VV T}, we have 
   \[ \bigoplus_{T\in \mathcal{A}'} T_{z_n} = Z_{z_n} \oplus W_{z_n}, \] 
   hence
   \[ f_{z_n} : Z_{z_n} \oplus W_{z_n}  \xlongrightarrow{\sim} \bigoplus_{T\in \mathcal{A}''} M(P')_{z_n}. \]

   Let $C= f_{z_n}(Z_{z_n})$ and $D=f_{z_n}(W_{z_n})$. Then $\dim C= \dim Z_{z_n} = 1$, and
   \[ \bigoplus_{T\in \mathcal{A}''} M(P')_{z_n} = C \oplus D. \]  
   For every $x\in P'$, we have $M(P')_{z_n} = k = M(P')_x$. Thus for every $x\in P'$, we have 
   \[ \bigoplus_{T\in \mathcal{A}''} M(P')_x = C \oplus D. \] 
   Define submodules $C(P')$ and $D(P')$ of $\bigoplus_{T\in \mathcal{A}''} M(P')$ by 
   \[ C(P')_x = C, \qquad D(P')_x = D \qquad \mbox{ for all }x\in P'. \]
   Then $C(P')\cong M(P')$ and
   \[ \bigoplus_{T\in \mathcal{A}''} M(P') = C(P') \oplus D(P'). \] 

   Let $\accentset{\sim}{C} = f^{-1}(C(P'))$ and $\accentset{\sim}{D} = f^{-1}(D(P'))$. Then $\accentset{\sim}{C} \cong M(P')$ and  
   \[ \bigoplus_{T\in \mathcal{A}''} T = \accentset{\sim}{C} \oplus \accentset{\sim}{D}. \]
   Thus 
   \[  V' =  \accentset{\sim}{C} \oplus \accentset{\sim}{D} \oplus  \left( \bigoplus_{T\in \mathcal{A}'-\mathcal{A}''} T \right).  \]
   Since we have
   \begin{align*} 
      Z_{z_n} &= \accentset{\sim}{C}_{z_n}, \\
      W_{z_n} &= \accentset{\sim}{D}_{z_n}, \\ 
      T_{z_n} &= 0 \qquad \mbox{ for all } T\in \mathcal{A}'-\mathcal{A}'',
   \end{align*} 
   there exist submodules $\accentset{\sim}{Z}, \accentset{\sim}{W}$ of $V(n+1)$ such that for all $x\in P(n+1)$:
   \begin{align*}
      \accentset{\sim}{Z}_x &= \begin{cases}
         Z_x & \mbox{ if } x\sqsubseteq z_n, \\
         \accentset{\sim}{C}_x & \mbox{ if } z_n \sqsubset x \sqsubseteq z_{n+1};
      \end{cases} \\ 
      \accentset{\sim}{W}_x &= \begin{cases}
         W_x & \mbox{ if } x\sqsubseteq z_n, \\
         \accentset{\sim}{D}_x \oplus \left( \bigoplus_{T\in \mathcal{A}'-\mathcal{A}''} T_x \right) & \mbox{ if } z_n \sqsubset x \sqsubseteq z_{n+1}. 
      \end{cases}
   \end{align*}
   Clearly $V(n+1) = \accentset{\sim}{Z} \oplus \accentset{\sim}{W}$, and this is an internal direct sum decomposition with the desired properties.
\end{proof}

\begin{lemma} \label{lem:infinite zigzag monotonic case}
   Let $P$ be a zigzag poset with extrema $\{z_i \mid i\in \mathbb{N} \}$. Let $V$ be a pointwise finite dimensional persistence module indexed by $P$. Let $m\in \mathbb{N}$ and assume that $V$ is monotonic after $z_m$. Then for every internal direct sum decomposition 
   \[ V(m)=Z' \oplus W' \] 
   where $Z'$ is an interval module, there exists an internal direct sum decomposition 
   \[ V=Z\oplus W \]
   such that: 
  \begin{enumerate}
      \item $Z$ is an interval module;
      \item $Z', W'$ are, respectively, the restrictions of $Z, W$ to $P(m)$.
  \end{enumerate}
\end{lemma}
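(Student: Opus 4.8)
The plan is to build the decomposition of $V$ by stacking together decompositions of the finite truncations $V(n)$, $n \geq m$, feeding each one into the next via Lemma \ref{lem:infinite zigzag inductive step}, and then passing to the union.

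First I would dispose of the case $Z'_{z_m} = 0$. Since $z_0 \sqsubseteq x$ for every $x \in P$ (Lemma \ref{lem:zigzag shape}(1)), we have $P(m) = \{x \in P \mid x \sqsubseteq z_m\}$, so Lemma \ref{lem:extension by zero}(1) applies to the decomposition $V(m) = Z' \oplus W'$, with $i = m$; it yields $V = Z \oplus W$ with $Z$ an interval module whose restriction to $P(m)$ is $Z'$, and the explicit formula for $W$ given in that proof shows its restriction to $P(m)$ is $W'$. This is the required decomposition.

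So assume $Z'_{z_m} \neq 0$. Now I would construct, by induction on $n \geq m$, an internal direct sum decomposition $V(n) = Z^{(n)} \oplus W^{(n)}$ with $Z^{(n)}$ an interval module and $Z^{(n)}_{z_n} \neq 0$, together with an isomorphism $\phi^{(n)} \colon M(I_n) \xlongrightarrow{\sim} Z^{(n)}$, where $I_n = \{x \in P(n) \mid Z^{(n)}_x \neq 0\}$ is the interval with $Z^{(n)} \cong M(I_n)$, requiring that for $n > m$ the restrictions of $Z^{(n)}, W^{(n)}, \phi^{(n)}$ to $P(n-1)$ are $Z^{(n-1)}, W^{(n-1)}, \phi^{(n-1)}$. (The restriction condition on $\phi$ makes sense because that on $Z$ forces $I_n \cap P(n-1) = I_{n-1}$, hence $M(I_n)|_{P(n-1)} = M(I_{n-1})$.) Start from $Z^{(m)} = Z'$, $W^{(m)} = W'$, and any isomorphism $\phi^{(m)} \colon M(I_m) \to Z'$. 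For the inductive step, $Z^{(n)}_{z_n} \neq 0$ lets us invoke Lemma \ref{lem:infinite zigzag inductive step}, which gives $V(n+1) = Z^{(n+1)} \oplus W^{(n+1)}$ with $Z^{(n+1)}$ an interval module, $Z^{(n+1)}_{z_{n+1}} \neq 0$, restricting to $Z^{(n)}, W^{(n)}$ on $P(n)$. From the proof of that lemma, on the tooth $\{x \in P \mid z_n \sqsubset x \sqsubseteq z_{n+1}\}$ the module $Z^{(n+1)}$ coincides with the submodule $\accentset{\sim}{C}$ constructed there, which comes equipped with a canonical isomorphism from the constant module on $\{x \in P \mid z_n \sqsubseteq x \sqsubseteq z_{n+1}\}$; I would rescale this isomorphism so that at $z_n$ it agrees with $\phi^{(n)}$, then glue it to $\phi^{(n)}$ to define $\phi^{(n+1)}$. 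The verification that the glued map is a morphism reduces, using Lemma \ref{lem:zigzag shape}, to the observation that in any comparable pair $x \leq y$ of $P(n+1)$ with exactly one of $x,y$ lying in $P(n)$, that one is $z_n$; so the only new naturality squares to check sit over $z_n$, where the two definitions were arranged to coincide.

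Finally, set $I = \bigcup_{n \geq m} I_n$ and define submodules $Z, W$ of $V$ by $Z_x = Z^{(n)}_x$, $W_x = W^{(n)}_x$ for any $n \geq m$ with $x \in P(n)$ (well defined by the compatibilities, and every $x$ eventually lies in $P(n)$). Then $V = Z \oplus W$ and the restrictions to $P(m)$ are $Z', W'$; one checks $I$ is an interval in $P$ via Lemma \ref{lem:interval in zigzag}, using that each $I_n$ is an interval in $P(n)$ and $I_n = I \cap P(n)$; and $\phi = \bigcup_{n \geq m} \phi^{(n)}$ is then a well-defined isomorphism $M(I) \xlongrightarrow{\sim} Z$, so $Z$ is an interval module. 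The step I expect to be the main obstacle is the inductive production of the compatible trivializations $\phi^{(n)}$: a pointwise one-dimensional module over a zigzag poset need not be a constant module, so one cannot simply glue arbitrary local isomorphisms; it is precisely the freedom to rescale the canonical trivialization of the new tooth, together with the fact that every new comparable pair contains $z_n$, that allows the extension to be carried out.
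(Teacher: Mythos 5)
Your proof is correct and takes essentially the same route as the paper: handle the case $Z'_{z_m}=0$ via Lemma \ref{lem:extension by zero}, iterate Lemma \ref{lem:infinite zigzag inductive step} to obtain a compatible tower $V(n)=Z^{(n)}\oplus W^{(n)}$, and glue. The one place you elaborate beyond the paper is the final step: the paper simply asserts that the glued $Z$ is an interval module, whereas you explicitly carry along compatible trivializations $\phi^{(n)}\colon M(I_n)\xrightarrow{\sim} Z^{(n)}$. That extra care is legitimate — one does need some justification that a compatible tower of interval modules glues to an interval module — but you can obtain the $\phi^{(n)}$ without reopening the proof of Lemma \ref{lem:infinite zigzag inductive step}: choose any isomorphism $\psi\colon M(I_{n+1})\to Z^{(n+1)}$; since $Z^{(n+1)}$ restricts to $Z^{(n)}$ on $P(n)$, the restriction $\psi|_{P(n)}$ is an isomorphism $M(I_n)\to Z^{(n)}$, so $(\psi|_{P(n)})^{-1}\circ\phi^{(n)}$ is an automorphism of $M(I_n)$, hence by Lemma \ref{lem:endo} equals $\lambda\cdot\mathrm{id}$ for some $\lambda\neq 0$, and $\phi^{(n+1)}:=\lambda\psi$ does the job. (Alternatively one can skip the bookkeeping entirely: the glued $Z$ has one-dimensional fibers exactly on the interval $I=\bigcup_n I_n$, all its structure maps inside $I$ are isomorphisms since any comparable pair lies in some $P(n)$, and such a module over a zigzag poset is necessarily constant because the Hasse graph is a path and so carries no monodromy.)
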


\begin{proof}
      If $Z'_{z_m}=0$, the result is immediate from Lemma \ref{lem:extension by zero}. 

      Suppose $Z'_{z_m}\neq 0$. Set $Z^{(m)}=Z'$ and $W^{(m)}=W'$. By Lemma \ref{lem:infinite zigzag inductive step}, we can choose inductively an internal direct sum decomposition
      \[ V(n) = Z^{(n)}\oplus W^{(n)} \quad \mbox{ for each integer } n>m \]
      such that: 
      \begin{enumerate}
         \item $Z^{(n)}$ is an interval module and $Z^{(n)}_{z_n} \neq 0$;
         \item $Z^{(n-1)}, W^{(n-1)}$ are, respectively, the restrictions of $Z^{(n)}, W^{(n)}$ to $P(n-1)$. 
      \end{enumerate}
      This allows us to define submodules $Z, W$ of $V$ by 
      \[ Z_x = Z^{(n)}_x, \qquad W_x = W^{(n)}_x \qquad \mbox{ if } x\in P(n). \]
      Clearly $V = Z\oplus W$, and this is an internal direct sum decomposition with the desired properties.
\end{proof}

\begin{lemma} \label{lem: infinite zigzag positive case}
   Let $P$ be a zigzag poset with extrema $\{z_i \mid i\in \mathbb{N} \}$. Let $V$ be a nonzero pointwise finite dimensional persistence module indexed by $P$. Then there exists an internal direct sum decomposition $V = Z\oplus W$ such that $Z$ is an interval module.
\end{lemma}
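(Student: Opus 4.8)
The plan is to dispose of the statement by a dichotomy on the finite truncations $V(n)$ (see Notation \ref{nota:zigzag truncate at n}): either some truncation has an interval summand supported strictly between $z_0$ and $z_n$, which can be extended by zero, or else the hypothesis of Lemma \ref{lem:monotonic m exists} holds, and one invokes monotonicity together with Lemma \ref{lem:infinite zigzag monotonic case}.

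\textbf{First case.} Suppose there exist $n\in\mathbb{N}$, an interval decomposition $V(n)=\bigoplus_{U\in\mathcal{A}}U$, and a summand $U_0\in\mathcal{A}$ with $(U_0)_{z_0}=(U_0)_{z_n}=0$. Note that $n\geq 1$, since for $n=0$ every summand of an interval decomposition of $V(0)$ is an interval module over the one-point poset $\{z_0\}$ and hence nonzero at $z_0$. Put $W''=\bigoplus_{U\in\mathcal{A}-\{U_0\}}U$, so that $V(n)=U_0\oplus W''$ with $U_0$ an interval module and $(U_0)_{z_0}=(U_0)_{z_n}=0$. Lemma \ref{lem:extension by zero}(2), applied with $i=0$ and $j=n$, then extends $U_0$ by zero to an interval module $Z$ which is a direct summand of $V$, giving the desired decomposition.

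\textbf{Second case.} Otherwise, for every $n\in\mathbb{N}$, every interval decomposition $V(n)=\bigoplus_{U\in\mathcal{A}}U$, and every $U\in\mathcal{A}$, we have $U_{z_0}\neq 0$ or $U_{z_n}\neq 0$; this is precisely the hypothesis of Lemma \ref{lem:monotonic m exists}, so there is an $m\in\mathbb{N}$ such that $V$ is monotonic after $z_m$. Since $V\neq 0$, choose $x_0\in P$ with $V_{x_0}\neq 0$; by Lemma \ref{lem:zigzag shape}(1), together with condition (1) of Definition \ref{def:zigzag}, we get $x_0\in P(n_0)$ for some $n_0\in\mathbb{N}$, so after replacing $m$ by $\max(m,n_0)$ — harmless by the Remark that monotonicity propagates to larger indices — we may assume $V(m)\neq 0$. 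By Theorem \ref{thm:finite zigzag}, $V(m)$ has an interval decomposition, and as $V(m)\neq 0$ we may pick one summand $Z'$ and let $W'$ be the direct sum of the remaining ones, so $V(m)=Z'\oplus W'$ with $Z'$ an interval module. Lemma \ref{lem:infinite zigzag monotonic case} then produces an internal direct sum decomposition $V=Z\oplus W$ with $Z$ an interval module.

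All the substance is carried by the quoted results; the only points needing care are recognizing that the failure of the first case is verbatim the hypothesis of Lemma \ref{lem:monotonic m exists}, and enlarging $m$ so that $V(m)$ is genuinely nonzero, ensuring Theorem \ref{thm:finite zigzag} yields a nonzero (hence interval) summand to feed into Lemma \ref{lem:infinite zigzag monotonic case}. I do not anticipate any real obstacle beyond this bookkeeping.
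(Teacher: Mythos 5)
Your proof is correct and follows essentially the same route as the paper: a dichotomy on the finite truncations $V(n)$, with Lemma~\ref{lem:extension by zero} in one branch and Lemma~\ref{lem:monotonic m exists} together with Lemma~\ref{lem:infinite zigzag monotonic case} in the other. The only difference is the case split: the paper's Case~1 asks merely for some summand $U$ of some $V(n)$ with $U_{z_n}=0$ (handled by part~(1) of Lemma~\ref{lem:extension by zero}, noting $P(n)=\{x\mid x\sqsubseteq z_n\}$ when $\Gamma=\mathbb{N}$), so its Case~2 is strictly stronger than the hypothesis of Lemma~\ref{lem:monotonic m exists} but still implies it, whereas you take the exact negation of that hypothesis, which obliges you to require $U_{z_0}=U_{z_n}=0$ and invoke part~(2) of Lemma~\ref{lem:extension by zero} instead --- both slicings are valid.
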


\begin{proof}
   We know by Theorem \ref{thm:finite zigzag} that for every $n\in\mathbb{N}$, the persistence module $V(n)$ indexed by $P(n)$ has an interval decomposition. There are two cases to consider: 

   \textbf{Case 1}: For some $n\in \mathbb{N}$, for some interval decomposition $V(n) = \bigoplus_{U\in \mathcal{A}} U$, for some $U\in \mathcal{A}$, we have $U_{z_n}=0$.

   In this case, the result is immediate from Lemma \ref{lem:extension by zero}.
   
   \textbf{Case 2}: For all $n\in \mathbb{N}$, for all interval decomposition $V(n) = \bigoplus_{U\in \mathcal{A}} U$, for all $U\in \mathcal{A}$, we have $U_{z_n}\neq 0$.

   In this case, by Lemma \ref{lem:monotonic m exists}, there exists $m\in \mathbb{N}$ such that $V$ is monotonic after $z_m$. We choose such an $m$ to be sufficiently large so that $V(m)$ is nonzero. 
   
   Let $V(m) = \bigoplus_{U\in \mathcal{A}} U$ be an interval decomposition. Let $Z'$ be any member of $\mathcal{A}$. Let $W'=\bigoplus_{U\in \mathcal{A}-\{Z'\}} U$. 
   Then $V(m)= Z' \oplus W'$. Moreover, $Z'$ is an interval module. The result now follows from Lemma \ref{lem:infinite zigzag monotonic case}.
\end{proof}

\begin{theorem} \label{thm: zigzag N case}
    Let $P$ be a zigzag poset with extrema $\{z_i \mid i\in \mathbb{N} \}$. Let $V$ be a pointwise finite dimensional persistence module indexed by $P$. Then $V$ has an interval decomposition.
\end{theorem}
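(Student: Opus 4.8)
The plan is to deduce Theorem \ref{thm: zigzag N case} from Lemma \ref{lem:interval decomposition criteria} in exactly the same way that Theorem \ref{thm:finite zigzag} and Theorem \ref{thm:totally ordered case} were deduced from their respective ``gluing'' lemmas. Lemma \ref{lem:interval decomposition criteria} reduces the existence of an interval decomposition for an arbitrary pointwise finite dimensional module to the statement that every \emph{nonzero} such module has a direct summand which is an interval module. But this is precisely the content of Lemma \ref{lem: infinite zigzag positive case}. So the proof should be a one-line invocation: combine Lemma \ref{lem:interval decomposition criteria} with Lemma \ref{lem: infinite zigzag positive case}.

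Concretely, I would argue as follows. By Lemma \ref{lem: infinite zigzag positive case}, every nonzero pointwise finite dimensional persistence module indexed by $P$ admits an internal direct sum decomposition $V = Z \oplus W$ with $Z$ an interval module; in particular it has a direct summand that is an interval module. Therefore the hypothesis of Lemma \ref{lem:interval decomposition criteria} is satisfied for the poset $P$, and that lemma yields that every pointwise finite dimensional persistence module indexed by $P$ has an interval decomposition. There is essentially no obstacle here: all the work has already been done in establishing monotonicity (Section \ref{sec: monotonicity}) and the inductive gluing (Lemmas \ref{lem:infinite zigzag inductive step} and \ref{lem:infinite zigzag monotonic case}), which together prove Lemma \ref{lem: infinite zigzag positive case}. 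The only thing to be mildly careful about is that Lemma \ref{lem:interval decomposition criteria} requires a summand of \emph{every} nonzero pointwise finite dimensional module, which is exactly what Lemma \ref{lem: infinite zigzag positive case} provides, so no gap arises.

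Thus the proof reads:

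\begin{proof}
   Immediate from Lemma \ref{lem:interval decomposition criteria} and Lemma \ref{lem: infinite zigzag positive case}.
\end{proof}
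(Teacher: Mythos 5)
Your proposal is correct and is exactly the paper's own proof: the theorem follows by combining Lemma \ref{lem:interval decomposition criteria} with Lemma \ref{lem: infinite zigzag positive case}, and the paper's one-line proof says precisely this.
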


\begin{proof}
   Immediate from Lemma \ref{lem:interval decomposition criteria} and Lemma \ref{lem: infinite zigzag positive case}.
\end{proof}

\begin{notation}
   Let $P$ be a zigzag poset with extrema $\{z_i \mid i\in \mathbb{Z} \}$. Let $\sqsubseteq$ be the associated total order. Let $n\in \mathbb{N}$. Let $V$ be a persistence module indexed by $P$.
   \begin{enumerate}
      \item We write $P^+, P^-, P(-n, n)$ for the full subposets of $P$ defined by 
      \begin{align*}
         P^+ &= \{ x\in P \mid z_0 \sqsubseteq x \}, \\
         P^- &= \{ x\in P \mid x \sqsubseteq z_0 \}, \\
         P(-n, n) &= \{ x\in P \mid z_{-n} \sqsubseteq x \sqsubseteq z_n \}.
      \end{align*} 

      \item We write $V^+, V^-, V(-n, n)$ for the restrictions of $V$ to $P^+, P^-, P(-n, n)$, respectively. 
   \end{enumerate}
\end{notation}

\begin{lemma} \label{lem:infinite zigzag two sided case}
   Let $P$ be a zigzag poset with extrema $\{z_i \mid i\in \mathbb{Z} \}$. Let $V$ be a nonzero pointwise finite dimensional persistence module indexed by $P$. Then there exists an internal direct sum decomposition $V = Z\oplus W$ such that $Z$ is an interval module.
\end{lemma}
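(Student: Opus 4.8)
The plan is to reduce the two–sided case to the one–sided ($\mathbb{N}$–indexed) case already settled in Lemma~\ref{lem: infinite zigzag positive case}, by cutting $P$ at the middle extremum $z_0$ and gluing. Let $\sqsubseteq$ be the associated total order. The full subposets $P^+$ and $P^-$ are themselves zigzag posets whose extrema are indexed by $\mathbb{N}$ (namely $z_0,z_1,z_2,\dots$, resp.\ $z_0,z_{-1},z_{-2},\dots$ after reindexing, with $z_0$ the $\sqsubseteq$–least element of $P^+$ and the $\sqsubseteq$–greatest element of $P^-$); one checks this directly from Definition~\ref{def:zigzag}. Hence by Theorem~\ref{thm: zigzag N case} the restrictions $V^+$ and $V^-$ each admit interval decompositions, and since $V\neq 0$ at least one of them is nonzero.

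First I would dispose of the easy cases by extension by zero. If some summand $T$ in an interval decomposition of $V^+$ has $T_{z_0}=0$, then, because $P^+=\{x\in P\mid z_0\sqsubseteq x\}$, Lemma~\ref{lem:extension by zero}(1) (with $i=0$) extends $T$ to an interval module that is a direct summand of $V$, and we are done; the case of $V^-$ is symmetric. So it remains to treat the case where every summand of the chosen interval decompositions of $V^+$ and $V^-$ is nonzero at $z_0$. In that case each such summand is an interval module whose value at $z_0$ is one-dimensional, so both decompositions are finite and $V_{z_0}\neq 0$; moreover, by Lemma~\ref{lem:interval in zigzag}, the support of each $V^+$–summand is an initial segment of $P^+$ for $\sqsubseteq$ and that of each $V^-$–summand is a final segment of $P^-$.

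For this remaining case I would glue at $z_0$ exactly as in the proof of Lemma~\ref{lem:gluing over totally ordered set}. Set $R^+=\{x\in P\mid z_0\sqsubseteq x\sqsubseteq z_1\}$ and $R^-=\{x\in P\mid z_{-1}\sqsubseteq x\sqsubseteq z_0\}$; these are totally ordered, and by Lemma~\ref{lem:zigzag shape}(2) every $x\in P$ comparable to $z_0$ lies in $R^+\cup R^-$, so no order relation of $P$ connects $P^+-\{z_0\}$ with $P^--\{z_0\}$. Let $\mathcal F$ be the flag of images of $V|_{R^+}$ at $z_0$ if $z_0$ is the maximal element of $R^+$, and the flag of kernels otherwise; define $\mathcal G$ from $R^-$ likewise. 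By Lemma~\ref{lem:basis for two flags} choose a basis $B$ of $V_{z_0}$ compatible with both $\mathcal F$ and $\mathcal G$ and fix $v\in B$. The crux is to produce internal direct sum decompositions $V^+=K^+\oplus W^+$ and $V^-=K^-\oplus W^-$ with $K^+,K^-$ interval modules, $\{v\}$ a basis of $K^+_{z_0}=K^-_{z_0}$, and $B-\{v\}$ a basis of $W^+_{z_0}=W^-_{z_0}$. Granting this, the two decompositions agree at $z_0$ and no relation crosses $z_0$, so setting $Z_x=K^+_x$, $W_x=W^+_x$ for $x\in P^+$ and $Z_x=K^-_x$, $W_x=W^-_x$ for $x\in P^-$ gives $V=Z\oplus W$ with $Z$ an interval module: its support is the union of an initial segment of $P^+$ and a final segment of $P^-$, hence an interval by Lemma~\ref{lem:interval in zigzag}, and after rescaling one of the two isomorphisms at $z_0$ the transition maps of $Z$ are those of the corresponding constant module.

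The main obstacle is precisely producing these matching one-sided decompositions with the prescribed behaviour at $z_0$. Over a totally ordered set this is supplied by Lemmas~\ref{lem:decomposition with maximal element} and~\ref{lem:decomposition with minimal element}, but $P^+$ and $P^-$ are only zigzag posets, so I would establish their $\mathbb{N}$–indexed–zigzag analogue. The building block is already present in the proof of Theorem~\ref{thm:finite zigzag}: Claim~\ref{claim: finite case inductive step}, Case~3, produces, from a basis of $V(n)_{z_0}$ compatible with $\mathcal F$ and a chosen vector $v$, an interval summand of $V(n)$ with $\{v\}$ a basis of its value at $z_0$ and $B-\{v\}$ a basis of the complement's value there. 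Propagating this uniformly in $n$ via the monotonicity machinery of Lemmas~\ref{lem:monotonic m exists}, \ref{lem:infinite zigzag inductive step} and~\ref{lem:infinite zigzag monotonic case} yields the desired decomposition of $V^+$, and symmetrically for $V^-$. The delicate point throughout is forcing the $z_0$–component of the complement to be exactly $\langle B-\{v\}\rangle$, rather than merely \emph{some} complement of $\langle v\rangle$; this is exactly what compatibility of $B$ with the flags $\mathcal F$ and $\mathcal G$ (via Lemma~\ref{lem:basis for two flags}) guarantees on each side.
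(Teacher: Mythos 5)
Your plan---cut at $z_0$, build matching one-sided decompositions of $V^+$ and $V^-$ with prescribed $z_0$-values, and glue---hinges on the claimed ``$\mathbb{N}$-indexed-zigzag analogue'' of Lemmas \ref{lem:decomposition with maximal element}/\ref{lem:decomposition with minimal element}, and that analogue is false with the flags you use. Your $\mathcal F$ and $\mathcal G$ are computed only from the first totally ordered segments $R^+$ and $R^-$; they do not see the structure of $V^\pm$ beyond $z_1$ and $z_{-1}$, and that further structure imposes additional constraints on which complements of $\langle v\rangle$ can occur as $W^\pm_{z_0}$. Concretely, take $P^+=\{z_0,a,z_1,b,z_2\}$ with $z_0\leq a\leq z_1\geq b\geq z_2$ and $V$ vanishing past $z_2$; set $V_{z_0}=V_a=V_{z_1}=k^2$, $V_b=V_{z_2}=k$, with $V_{z_0,a},V_{a,z_1},V_{z_2,b}$ the identity and $V_{b,z_1}(1)=(1,1)$. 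All maps out of $z_0$ are injective, so the flag of kernels of $V|_{R^+}$ at $z_0$ is trivial and $B=\{(1,0),(0,1)\}$ is compatible. Yet if $K^+$ is an interval summand with $K^+_{z_0}=\langle(1,0)\rangle$, its complement $W^+$ is forced to satisfy $W^+_{z_1}\supseteq\im(V_{b,z_1})=\langle(1,1)\rangle$ and hence $W^+_a=W^+_{z_0}=\langle(1,1)\rangle$, not $\langle(0,1)\rangle$; the same obstruction occurs for $v=(0,1)$. So compatibility with $\mathcal F$ and $\mathcal G$ alone does not guarantee the construction, contrary to the last sentence of your proposal.

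The paper avoids this by never constructing the two one-sided decompositions independently at $z_0$. After disposing of the easy cases via Lemma \ref{lem:extension by zero}, it chooses $n$ large enough that $V^+$ and $V^-$ are monotonic past the interface, takes an interval decomposition of the \emph{two-sided} finite restriction $V(-n,n)$ (where $z_0$ is an interior extremum), picks a single summand $T$, and restricts $T$ and its complement to $P^+(n)$ and $P^-(n)$; these restrictions automatically agree at $z_0$ because they come from the same $T$. Each side is then extended outward to all of $P^\pm$ by Lemma \ref{lem:infinite zigzag monotonic case}. To salvage your route you would need to replace $\mathcal F,\mathcal G$ by the finer flags at $z_0$ determined by the supports of the interval summands of $V^+$, $V^-$ (not by $R^\pm$ alone), and proving those are genuine flags and that Lemma \ref{lem:basis for two flags} suffices is essentially the content that the paper's $V(-n,n)$ maneuver packages more cleanly.
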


\begin{proof}
   For each $n\in \mathbb{N}$, set $y_n = z_{-n}$. Observe that: 
   \begin{quote}
      $P^+$ is a zigzag poset with extrema $\{z_i \mid i\in \mathbb{N}\}$; 
      
      $P^-$ is a zigzag poset with extrema $\{y_i \mid i\in \mathbb{N}\}$. 
   \end{quote}   
   For each $n\in \mathbb{N}$, we write $V^+(n), V^-(n)$ for, respectively, the restrictions of $V^+, V^-$ to $P^+(n), P^-(n)$ (see Notation \ref{nota:zigzag truncate at n}).

   \textbf{Case 1}: For some $n\in \mathbb{N}$, for some interval decomposition $V^+(n) = \bigoplus_{U\in \mathcal{A}} U$, for some $U\in \mathcal{A}$, we have $U_{z_0}=U_{z_n}=0$.

   In this case, the result is immediate from Lemma \ref{lem:extension by zero}.

   \textbf{Case 2}: For some $n\in \mathbb{N}$, for some interval decomposition $V^-(n) = \bigoplus_{U\in \mathcal{A}} U$, for some $U\in \mathcal{A}$, we have $U_{y_0}=U_{y_n}=0$.

   In this case, the result is again immediate from Lemma \ref{lem:extension by zero}.
  
   \textbf{Case 3}: Neither Case 1 nor Case 2 occurs.

   In this case, by Lemma \ref{lem:monotonic m exists}, there exist $m, \ell \in \mathbb{N}$ such that: 
   \begin{quote}
      $V^+$ is monotonic after $z_m$;

      $V^-$ is monotonic after $y_\ell$.
   \end{quote}
   We choose an integer $n \geq \max\{ m, \ell \}$ so that $V^+(n)$ or $V^-(n)$ is nonzero.

   By Theorem \ref{thm:finite zigzag}, there is an interval decomposition $V(-n,n) = \bigoplus_{U\in \mathcal{A}} U$.
   Let $T$ be any member of $\mathcal{A}$. Let $S=\bigoplus_{U\in \mathcal{A}-\{T\}} U$. Then $T$ is an interval module, and $V(-n, n) = T\oplus S$. 

   Now let $Z', W'$ be, respectively, the restrictions of $T, S$ to $P^+(n)$. Then $Z'$ is zero or an interval module, and $V^+(n) = Z' \oplus W'$. Similarly let $Z'', W''$ be, respectively, the restrictions of $T, S$ to $P^-(n)$. Then $Z''$ is zero or an interval module, and $V^-(n) = Z'' \oplus W''$. 

   \begin{claim} \label{claim:left and right decomposition}
      \begin{enumerate}
         \item There exists an internal direct sum decomposition $V^+ = \accentset{\sim}{Z} \oplus \accentset{\sim}{W}$ such that:
            \begin{quote}
               $\accentset{\sim}{Z}=0$ if $Z'=0$;
               
               $\accentset{\sim}{Z}$ is an interval module if $Z'$ is an interval module; 

               $Z', W'$ are, respectively, the restrictions of $\accentset{\sim}{Z}, \accentset{\sim}{W}$ to $P^+(n)$.
            \end{quote}
         
         \item There exists an internal direct sum decomposition $V^- = \accentset{\approx}{Z} \oplus \accentset{\approx}{W}$ such that:
            \begin{quote}
               $\accentset{\approx}{Z}=0$ if $Z''=0$;
               
               $\accentset{\approx}{Z}$ is an interval module if $Z''$ is an interval module; 

               $Z'', W''$ are, respectively, the restrictions of $\accentset{\approx}{Z}, \accentset{\approx}{W}$ to $P^-(n)$.
            \end{quote}
      \end{enumerate}      
   \end{claim}
   
   \begin{proof}[Proof of Claim \ref{claim:left and right decomposition}]
      (1) If $Z'=0$, then let $\accentset{\sim}{Z}=0$ and $\accentset{\sim}{W}=V^+$.

      If $Z'\neq 0$, then $\accentset{\sim}{Z}$ and $\accentset{\sim}{W}$ with the desired properties exist by Lemma \ref{lem:infinite zigzag monotonic case}. 

      (2) Similar to (1).
   \end{proof}

   Let $\accentset{\sim}{Z}, \accentset{\sim}{W}, \accentset{\approx}{Z}, \accentset{\approx}{W}$ be chosen as in Claim \ref{claim:left and right decomposition}. We have: 
   \begin{equation} \label{eq:both sides join}
      \begin{split}
      \accentset{\sim}{Z}_{z_0} = Z'_{z_0} &= T_{z_0} = Z''_{z_0} = \accentset{\approx}{Z}_{z_0}, \\
      \accentset{\sim}{W}_{z_0} = W'_{z_0} &= S_{z_0} = W''_{z_0} = \accentset{\approx}{W}_{z_0}.
      \end{split}
   \end{equation} 
   Hence there exist submodules $Z, W$ of $V$ such that 
   \[ Z^+ = \accentset{\sim}{Z}, \qquad Z^- =\accentset{\approx}{Z}, \qquad W^+ = \accentset{\sim}{W}, \qquad W^-=\accentset{\approx}{W}. \]
   Clearly $V=Z\oplus W$. 

   It remains to see that $Z$ is an interval module. Since $T\neq 0$, we have $Z'\neq 0$ or $Z''\neq 0$, hence $\accentset{\sim}{Z}$ or $\accentset{\approx}{Z}$ is an interval module. 
   If $\accentset{\sim}{Z}=0$ or $\accentset{\approx}{Z}=0$, then $Z$ is obviously an interval module.
   If $\accentset{\sim}{Z}$ and $\accentset{\approx}{Z}$ are both interval modules, we need to see that $\accentset{\sim}{Z}_{z_0} \neq 0$ and $\accentset{\approx}{Z}_{z_0}\neq 0$. In this case, we have: 
   \begin{align*}
      & \mbox{ $\accentset{\sim}{Z}$ and $\accentset{\approx}{Z}$ are both interval modules } & \\
      & \Longrightarrow \quad Z'\neq 0 \enspace \mbox{ and } \enspace Z''\neq 0 & \\
      & \Longrightarrow \quad T_{z_0} \neq 0 & \mbox{ by Lemma \ref{lem:interval in zigzag} } \\
      & \Longrightarrow \quad \accentset{\sim}{Z}_{z_0} \neq 0 \enspace \mbox{ and } \enspace \accentset{\approx}{Z}_{z_0}\neq 0 & \mbox{ by \eqref{eq:both sides join}.}
   \end{align*} 
\end{proof}

\begin{theorem} \label{thm:zigzag Z case}
    Let $P$ be a zigzag poset with extrema $\{z_i \mid i\in \mathbb{Z} \}$. Let $V$ be a pointwise finite dimensional persistence module indexed by $P$. Then $V$ has an interval decomposition.
\end{theorem}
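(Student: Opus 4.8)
The plan is to obtain this exactly as the $\mathbb{Z}$-analogue of Theorem \ref{thm: zigzag N case}: invoke the abstract existence criterion together with the summand-extraction lemma for two-sided zigzags. Concretely, by Lemma \ref{lem:interval decomposition criteria} it suffices to check that every nonzero pointwise finite dimensional persistence module indexed by $P$ has a direct summand which is an interval module; and that is precisely Lemma \ref{lem:infinite zigzag two sided case}, which has already been established. So at this stage the proof is a single line, and the only bookkeeping is to confirm that the hypothesis of Lemma \ref{lem:interval decomposition criteria} is met — i.e. that $V$ is pointwise finite dimensional (given) and that the set of interval submodules of $V$ satisfies the required summand condition (which is exactly what Lemma \ref{lem:infinite zigzag two sided case} provides).

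It is worth recording where the genuine difficulty lies, since it has been paid for in advance. In Lemma \ref{lem:infinite zigzag two sided case} one splits $P$ at $z_0$ into the one-sided zigzag posets $P^+$ (extrema $\{z_i \mid i \in \mathbb{N}\}$) and $P^-$ (extrema $\{z_{-i} \mid i \in \mathbb{N}\}$); the easy cases are when some finite truncation $V^+(n)$ or $V^-(n)$ has a summand vanishing at both of its endpoints, which is handled by Lemma \ref{lem:extension by zero}. In the remaining case one uses Lemma \ref{lem:monotonic m exists} to find indices after which $V^+$ and $V^-$ are monotonic, extracts an interval summand $T$ of the finite zigzag module $V(-n,n)$ via Theorem \ref{thm:finite zigzag} for $n$ large, and then propagates the induced decompositions of $V^+(n)$ and $V^-(n)$ outward across all of $P^+$ and $P^-$ by Lemma \ref{lem:infinite zigzag monotonic case}. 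The two halves agree at $z_0$ (both restrict compatibly from $T$), so they glue to $V = Z \oplus W$, and Lemma \ref{lem:interval in zigzag} applied to $T$ guarantees $Z$ is a bona fide interval module rather than one that collapses on one side.

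The main obstacle, then, is not created by the present theorem: it was the monotonicity argument of Section \ref{sec: monotonicity} and the inductive gluing of Lemma \ref{lem:infinite zigzag inductive step}, both already in place. For Theorem \ref{thm:zigzag Z case} itself there is nothing further to grind through — the proof is immediate from Lemma \ref{lem:interval decomposition criteria} and Lemma \ref{lem:infinite zigzag two sided case}.
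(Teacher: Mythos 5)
Your proof is correct and is exactly the paper's argument: Theorem~\ref{thm:zigzag Z case} follows immediately from Lemma~\ref{lem:interval decomposition criteria} together with Lemma~\ref{lem:infinite zigzag two sided case}. The surrounding commentary accurately locates where the real work was done, but the proof itself coincides with the paper's.
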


\begin{proof}
   Immediate from Lemma \ref{lem:interval decomposition criteria} and Lemma \ref{lem:infinite zigzag two sided case}.
\end{proof}

Theorem \ref{thm: zigzag N case} can be deduced from Theorem \ref{thm:zigzag Z case} but we give the proof of Theorem \ref{thm: zigzag N case} first since it is easier. 

Theorem \ref{thm: zigzag N case} and Theorem \ref{thm:zigzag Z case} give Theorem C for zigzag posets with infinitely many extrema.

\begin{bibdiv}
\begin{biblist}
 
   \bib{bot}{article}{
      author={Botnan, Magnus Bakke},
      title={Interval decomposition of infinite zigzag persistence modules},
      journal={Proc. Amer. Math. Soc.},
      volume={145},
      date={2017},
      number={8},
      pages={3571--3577},
   }

   \bib{bot-cb}{article}{
      author={Botnan, Magnus Bakke},
      author={Crawley-Boevey, William},
      title={Decomposition of persistence modules},
      journal={Proc. Amer. Math. Soc.},
      volume={148},
      date={2020},
      number={11},
      pages={4581--4596},
   }

   \bib{bour}{book}{
      author={Bourbaki, Nicolas},
      title={Theory of sets},
      series={Elements of Mathematics},
      publisher={Springer-Verlag, Berlin},
      date={2004},
   }

   \bib{gr}{article}{
   author={Gabriel, Peter},
   author={Roiter, Andrei Vladimirovich},
   title={Representations of finite-dimensional algebras},
   conference={
      title={Algebra, VIII},
   },
   book={
      series={Encyclopaedia Math. Sci.},
      volume={73},
      publisher={Springer, Berlin},
   },
   date={1992},
}

\bib{ig}{article}{
   author={Igusa, Kiyoshi},
   author={Rock, Job Daisie},
   author={Todorov, Gordana},
   title={Continuous quivers of type $A$ (I) foundations},
   journal={Rend. Circ. Mat. Palermo (2)},
   volume={72},
   date={2023},
   number={2},
   pages={833--868},
}

\bib{ou}{book}{
   author={Oudot, Steve Y.},
   title={Persistence theory: from quiver representations to data analysis},
   series={Mathematical Surveys and Monographs},
   volume={209},
   publisher={American Mathematical Society, Providence, RI},
   date={2015},
}

\bib{par}{book}{
   author={Pareigis, Bodo},
   title={Categories and functors},
   series={Pure and Applied Mathematics},
   volume={39},
   publisher={Academic Press, New York-London},
   date={1970},
}

\bib{po}{book}{
   author={Polterovich, Leonid},
   author={Rosen, Daniel},
   author={Samvelyan, Karina},
   author={Zhang, Jun},
   title={Topological persistence in geometry and analysis},
   series={University Lecture Series},
   volume={74},
   publisher={American Mathematical Society, Providence, RI},
   date={2020},
}

\bib{ri}{article}{
   author={Ringel, Claus Michael},
   title={Representation theory of Dynkin quivers. Three contributions},
   journal={Front. Math. China},
   volume={11},
   date={2016},
   number={4},
   pages={765--814},
}

\bib{st}{article}{
   author={Steinberg, Robert},
   title={A geometric approach to the representations of the full linear group over a Galois field},
   journal={Trans. Amer. Math. Soc.},
   volume={71},
   date={1951},
   pages={274--282},
}

\end{biblist}
\end{bibdiv}

\end{document}